\newcommand{\X}{{\mathcal{X}}}
\newcommand{\Y}{{\mathcal{Y}}}
\newcommand{\A}{{\mathcal{A}}}
\newcommand{\B}{{\mathcal{B}}}
\newcommand{\C}{{\mathcal{C}}}
\newcommand{\Q}{{\mathbb{H}}}
\newcommand{\pro}{{*_{\mu}}}
\newcommand{\proone}{{*_{\mu}^{1}}}
\newcommand{\protwo}{{*_{\mu}^{2}}}
\newcommand{\prothree}{{*_{\mu}^{3}}}
\newcommand{\ee}{{\bf{e}}}
\newcommand{\rank}{{\rm{rank}}}
\newcommand{\diag}{{\rm{diag}}}
\newcommand{\Circ}{{\rm{circ}}}
\newcommand{\prox}{{\rm{prox}}}
\newcommand{\fold}{{\rm{fold}}}
\newcommand{\unfold}{{\rm{unfold}}}
\newcommand{\fft}{{\rm{fft}}}
\newcommand{\ifft}{{\rm{ifft}}}
\newcommand{\Tr}{{\rm{tr}}}
\newcommand{\ii}{{\bf{i}}}
\newcommand{\jj}{{\bf{j}}}
\newcommand{\kk}{{\bf{k}}}
\newtheorem{theorem}{Theorem}[section]
\newtheorem{definition}{Definition}[section]
\newtheorem{lemma}{Lemma}[section]
\newtheorem{remark}{Remark}[section]
\newtheorem{example}{Example}[section]
\newtheorem{corollary}{Corollary}[section]
\begin{document}

\title{\huge Low-rank quaternion tensor completion for color video inpainting via a novel factorization strategy}
\author[]{Zhenzhi Qin\protect\footnotemark[1],\quad Zhenyu Ming\protect\footnotemark[2],\quad Defeng Sun\protect\footnotemark[3],\quad Liping Zhang\protect\footnotemark[1]}

\maketitle

\date{}

\renewcommand{\thefootnote}{\fnsymbol{footnote}}
\footnotetext[1]{Department of Mathematical Sciences, Tsinghua University, Beijing 100084, China ({\tt qzz19@mails.tsinghua.edu.cn}; \quad {\tt lipingzhang@mail.tsinghua.edu.cn})}
\footnotetext[2]{Theory Lab, Central Research Institute, 2012 Labs, Huawei Technologies Co., Ltd., Hong Kong ({\tt mathmzy@163.com})}
\footnotetext[3]{Department of Applied Mathematics, The Hong Kong Polytechnic University,  Hung Hom, Kowloon, Hong Kong
({\tt defeng.sun@polyu.edu.hk})}
\renewcommand{\thefootnote}{\arabic{footnote}}

\vspace{-0.2in}

\begin{abstract}


 Recently, a quaternion tensor product named Qt-product was proposed, and then the singular value decomposition and the rank of a third-order quaternion tensor were given. From a more applicable perspective, we extend the Qt-product and propose a novel multiplication principle for third-order quaternion tensor named gQt-product. With the gQt-product, we introduce a brand-new singular value decomposition for third-order quaternion tensors named gQt-SVD and then define gQt-rank and multi-gQt-rank. We prove that the optimal low-rank approximation of a third-order quaternion tensor exists and some numerical experiments demonstrate the low-rankness of color videos. So, we apply the low-rank quaternion tensor completion to color video inpainting problems and present alternating least-square algorithms to solve the proposed low gQt-rank and multi-gQt-rank quaternion tensor completion models. The convergence analyses of the proposed algorithms are established and some numerical experiments on various color video datasets show the high recovery accuracy  and computational efficiency of our methods.

{\bf keywords}: quaternion tensor; color video inpainting; low-rank; singular value decomposition; tensor completion
\end{abstract}

\section{Introduction}
\noindent
\par
The purpose of this paper is to recover color videos via tensor completion. So far, tensors have been widely applied to signal processing \cite{Sidiropoulos2000}, computer vision \cite{2002Multilinear}, graph analysis \cite{PZhou2018,ZZhang2014,ZZhang2017} and data mining \cite{Savas2007}, to name a few. Tensor decomposition is a fundamental tool to cope with large-scale data which is arranged in tensor-based forms, since the scale of tensor data can be notably reduced while most inherent information are still preserved by using decomposition techniques. Four commonly used tensor decomposition methods are  CANDECOMP/PARAFAC (CP) decomposition \cite{Carroll1970,RHarshman1970},  Tucker decomposition \cite{Tucker1966}, tensor singular value decomposition (t-SVD) \cite{tproduct} and Triple Decomposition \cite{Qi2021tripledecomposition}, and the corresponding ranks are called CP rank \cite{Carroll1970,RHarshman1970}, Tucker rank \cite{Tucker1966}, tubal rank \cite{MKilmer2013} and Triple rank \cite{Qi2021tripledecomposition}, respectively.

For a positive integer $n$, $[n]\doteq \{ 1,2,\ldots,n \}$. Suppose that $\A\in\mathbb{R}^{n_1\times n_2\times \cdots \times n_p}$ is an $p$-th order tensor, where $n_1,\dots,n_p\in\mathbb{N}^{+}$. The CP decomposition is to decompose $\A$ as a sum of some outer products of $p$ vectors:
\begin{equation}\label{cpd}
    \A=\sum_{i=1}^{r}a_1^{(i)}\circ a_2^{(i)}\circ \cdots \circ a_p^{(i)},
\end{equation}
where the symbol ``$\circ$" denotes the outer product and $a_j^{(i)}\in\mathbb{R}^{n_j}$, $i\in [r]$, $j\in [p]$. The smallest $r$ required in CP decomposition \eqref{cpd} is defined as the CP rank of $\A$. It is learned from \cite{CHillar2013} that, in general, determining the CP rank of a given tensor whose order is no less than three is an NP-hard problem. In contrast to CP decomposition, Tucker decomposition is more computationally efficient.  Hence a number of low-rank tensor completion and recovery models are based on Tucker rank \cite{JCai2010,BRecht2010,ZWen2012}. Precisely, Tucker rank is a vector of the matrix ranks
\begin{equation*}
    {\rm rank}_{\rm TC}(\A)=\big( \rank(\A_{(1)}),\rank(\A_{(2)}),\ldots,\rank(\A_{(p)}) \big),
\end{equation*}
where $\A_{(i)}\in\mathbb{R}^{n_i\times (\Pi_{k=1}^{p}n_k/n_i)}$ is mode-$i$ matricization of tensor ($i\in [p]$). CP decomposition and Tucker decomposition are applicable to tensors with arbitrary orders. In 2011, Kilmer and Martin proposed a novel decomposition strategy specifically for third-order tensors \cite{tproduct}. Whereafter, the relevant tubal rank was introduced and studied in \cite{MKilmer2013} and testified to have excellent performance for image and video inpainting problems \cite{PZhou2018}.

Third-order tensors are the most widely used higher-order tensors in applications \cite{PZhou2018,ZZhang2014,ZZhang2017,MKilmer2013,EAcar2011,LYang2016}. For instance, a grey scale video can be viewed as a third-order tensor indexed by two spatial variables and one temporal variable. Unless otherwise specialized, tensors in this paper are of third-order. Low-rank tensor completion is one of the most important problems in tensor processing and analysis. It aims at filling in the missing entries of a partially observed low-rank tensor. Many practical datasets are highly structured in the sense that they can be approximately represented through a low-rank decomposition \cite{YXu2013,JiLiu2009}. As a consequence, the key idea of the recovery process is to find the low-rank approximation of the original tensor via the observed data, i.e.,
\begin{equation}\label{sec1model1}
    \min_{\C}\ \rank(\C),\quad \text{s.t.}\ P_{\Omega}(\C)=P_{\Omega}(\mathcal{M}),
\end{equation}
where $\rank(\cdot)$ is a certain tensor rank and $\Omega$ is the index set locating the observed data, $P_{\Omega}(\cdot)$ is a linear operator that extracts the entries in $\Omega$ and fills the entries not in $\Omega$ with zeros, and $\mathcal{M}$ is the raw tensor.

As previously mentioned, addressing model \eqref{sec1model1} with CP rank is an NP-hard problem. An alternative way is to employ Tucker rank instead of CP rank:
\begin{align}\label{sec1model2}
    \min_{\C}\ \sum_{i=1}^{p} \rank(\C_{(i)}),\quad \text{s.t.}\ P_{\Omega}(\C)=P_{\Omega}(\mathcal{M}),
\end{align}
and the nuclear-norm based convex relaxation of model \eqref{sec1model2} is considered as
\begin{align}\label{sec1model3}
    \min_{\C}\ \sum_{i=1}^{p} \|\C_{(i)}\|_{*},\quad \text{s.t.}\ P_{\Omega}(\C)=P_{\Omega}(\mathcal{M}).
\end{align}
However, Romera-Paredes et al. \cite{RParedes2013} proved that \eqref{sec1model3} is not a tight convex relaxation of \eqref{sec1model2}, and SVD is needed  to solve \eqref{sec1model3}, which will lead to high computational cost when coping with large-scale issues. To overcome the computational difficulty,  a matrix factorization method was designed by Xu et al. \cite{YXu2013}, which preserves the low-rank structure of the unfolded matrices, i.e.,
\begin{align}\label{sec1model4}
    \min_{X^{i},Y^{i},\C}\ \sum_{i=1}^{p} \alpha_{i}\|X^{i}Y^{i}-\C_{(i)}\|_{F}^2,\quad \text{s.t.}\ P_{\Omega}(\C)=P_{\Omega}(\mathcal{M}),
\end{align}
where $\alpha_i$ is a positive weight parameter satisfying  $\sum_{i=1}^{p}\alpha_i=1$. A similar third-order tensor recovery method based on Triple decomposition is proposed in \cite{Qi2021tripledecomposition}. As pointed in \cite{tproduct,MKilmer2013}, directly unfolding a tensor will destroy the multi-way structure of the original data, resulting in vital information loss and degraded recovery performance. Besides, solving \eqref{sec1model4} requires to deal with $p$ matrices and each matrix owns the same scale components as the original tensor. Thus the computational cost is relatively expensive. Instead, tubal rank has been adopted in \eqref{sec1model1} and testified to have not only promising recovery performance but also efficient computational process.  Semerci et al. \cite{Semerci2014} developed a new tensor nuclear norm (TNN) based on t-SVD, and subsequently Zhang et al. \cite{ZZhang2014} applied TNN to tensor completion problems. Zhou et al. \cite{PZhou2018} proposed the following model based on tubal rank and tensor product (t-product) to replace model \eqref{sec1model1}:
\begin{equation}\label{sec1model5}
    \min_{\A,\B,\C}\ \frac{1}{2}\|\A*\B-\C\|_{F}^2,\quad \text{s.t.}\ P_{\Omega}(\C)=P_{\Omega}(\mathcal{M}),
\end{equation}
where ``*" denotes the t-product \cite{tproduct}. From \cite{tproduct,MKilmer2013}, one can deal with t-product via fast Fourier transform (FFT) and block diagonalization of third-order tensors, which can significantly reduce the computational cost.

{\bf Motivation.} We now briefly describe the motivation of this paper here. All the above methods explore the approximate low-rank property of higher-order tensors. However, they are not good enough for the classic color video inpainting problems. In specific, the traditional t-SVD and Triple decomposition are specially designed for third-order tensors, while a color video can be naturally described as a fourth-order tensor, with four dimensions representing the length, width, frame numbers and RGB-channels of the considered color video, respectively. Moreover, the computational complexity of CP decomposition is NP-hard, and the unfolding operation in Tucker decomposition will destroy the original multi-way structure of the data. Therefore, it is desirable to
design a new type of tensor factorization strategy which can tackle the above issues in terms of the capability, the recovery performance and the computational cost. Notice that  the red, green and blue channel pixel values can be intuitively  encoded on the three imaginary parts of a quaternion. The use of quaternion matrices for color image representation has been fully  studied in the literature \cite{2007Hypercomplex,jia2019,jns2019,suba2011,1997A}. In 2022, we proposed a quaternion tensor product (Qt-product) and then introduced the singular value decomposition (Qt-SVD) and the rank of a third-order quaternion tensor (Qt-rank) by employing the discrete Fourier transformation (DFT) technique. They also proved that the existence of the best low-rank approximation of a third order quaternion tensor from the theoretical point and the low-rank of color videos from numerical experiments \cite{Qin2022}. But this is not applied to solve color video inpainting problems, and the DFT used in \cite{Qin2022} is very special. From a more applicable perspective, in this paper, we generalize the Qt-product and propose a novel multiplication principle for third-order quaternion tensor, and then establish low-rank quaternion tensor completion models to recover color videos.

{\bf Contribution.} By introducing an extensive quaternion discrete Fourier transformation (QDFT) based on a pure quaternion basis, we propose a novel multiplication principle for third-order quaternion tensor named gQt-product, and then a new SVD is given. With such SVD, we establish two low-rank quaternion tensor completion models to recover the incomplete color video data, and present an alternating least-squared (ALS) algorithm to solve the color video inpainting problems. The numerical experiments show that our methods outperform  other state-of-the-arts in the recovery accuracy  and computational efficiency. The main contributions are summarized as follows.
\begin{itemize}
\item A generalized QDFT based on a pure quaternion basis is introduced and a novel quaternion tensor product named gQt-product is proposed. With the gQt-product, we define identity quaternion tensor, unitary quaternion tensor, conjugate transpose and inverse of quaternion tensor. We prove that the collection of all invertible $n\times n\times l$ quaternion tensors forms a ring under standard tensor addition and the gQt-product.

\item A new gQt-product based SVD for quaternion tensors named gQt-SVD is given, and then the gQt-rank and the nuclear norm of  third-order quaternion tensor are defined. We prove that the optimal low-rank approximation of third-order quaternion tensor exists and some numerical experiments demonstrate the low-rankness of color videos. Note that gQt-rank is only defined on one mode of third order quaternion tensor without low rank structure in the other two modes, so we also introduce multi-gQt-rank.

\item To cope with color video inpainting problem, we construct low-rank quaternion tensor completion models \eqref{sec1model1} based on gQt-rank and multi-gQt-rank, and further propose their evolved forms \eqref{modelcompute} and \eqref{sec5model} via the gQt-product. We present an ALS algorithm to solve \eqref{modelcompute} and \eqref{sec5model}, and also show that the sequence generated by the ALS algorithm globally converges to a stationary point of the problem by using the Kurdyka--{\L}ojasiewicz property exhibited in the resulting problem. Extensive numerical experiments on various color video datasets show the high  recovery accuracy  and computational efficiency of our methods. Especially,  the criterion of the recovery performance illustrates our gQt-SVD-based method is superior to the commonly used t-SVD-based one.

\end{itemize}

The rest of this paper is organized as follows. In Section \ref{sec2}, we list some existing results for quaternion matrices and quaternion tensors.  and discuss the Fourier transform of quaternion tensors. In Section \ref{secQtproduct}, we introduce a generalized QDFT based on a pure quaternion basis, and then gQt-product, gQt-SVD, gQt-rank and multi-gQt-rank are defined. In Section \ref{sec4}, we establish related low-rank quaternion tensor completion models to recover the incomplete color video data, and present the ALS algorithm to solve the resulting problem. Moreover, its convergence rate analysis is also established. In Section \ref{sec6}, some numerical results are reported to confirm the advantages of gQt-SVD-based methods. The conclusions are drawn in Section \ref{sec7}.

\section{Preliminary}\label{sec2}
\subsection{Quaternions}
\noindent
\par
Let $\mathbb{R}$ and $\mathbb{C}$ denote the real field and the complex field, respectively. The quaternion field, denoted as $\Q$, is a four-dimensional vector space over real number field $\mathbb{R}$ with an ordered basis, denoted by $\textbf{1}$, $\textbf{i}$, $\textbf{j}$ and $\textbf{k}$. Here $\textbf{i}$, $\textbf{j}$ and $\textbf{k}$ are three imaginary units with the following multiplication laws:
\begin{flalign*}
	\textbf{i}^2=\textbf{j}^2=\textbf{k}^2=-1,\	\textbf{i}\textbf{j}=-\textbf{j}\textbf{i}=\textbf{k},\  \textbf{j}\textbf{k}=-\textbf{k}\textbf{j}=\textbf{i},\  \textbf{k}\textbf{i}=-\textbf{i}\textbf{k}=\textbf{j}.\
\end{flalign*}

Let $x=a+b\ii +c\jj +d\kk \in \Q$, where $a,b,c,d\in \mathbb{R}$, then the conjugate of $x$ is defined by $$x^{*}\doteq a-b\ii -c\jj -d\kk,$$ the norm of $x$ is $$|x|=|x^*|=\sqrt{xx^*}=\sqrt{x^{*}x}=\sqrt{a^2+b^2+c^2+d^2}.$$
and if $x\neq 0$, then $x^{-1}=\frac{x^*}{|x|^2}$.

\subsection{Quaternion matrix and quaternion tensor}
\noindent
\par
We give some notations here. Scalars, vectors, matrices and third-order tensors are denoted as lowercase letters ($a, b, \ldots$),  bold-case lowercase letters ($\textbf{a, b, \ldots}$), capital letters ($A, B, \ldots$) and  Euler script letters ($\mathcal{A, B, \ldots}$), respectively. We use $\bf{0}, \emph{O}$ and $\mathcal{O}$ to denote zero vector, zero matrix and zero tensor with appropriate dimensions. We use symbols $e$ to represent the vector whose elements are all 1, and $I$ and $\mathcal{I}$ to denote the identity matrix, and the identity tensor, respectively. The identity tensor $\mathcal{I}$ will be defined in Section \ref{secQtproduct}.

Then a quaternion matrix $A=(A_{ij})\in\Q^{n_1\times n_2}$ can be denoted as
$$A=A_{\bf{e}}+A_{\ii}\ii+A_{\jj}\jj+A_{\kk}\kk, $$
where $A_{\bf{e}}, A_{\ii}, A_{\jj}, A_{\kk}\in\mathbb{R}^{n_1\times n_2}$. The transpose of $A$ is $A^T=(A_{ji})$. The conjugate transpose of $A$ is $$A^*=(A_{ji}^*)=A_{\bf{e}}^T-A_{\ii}^T\ii-A_{\jj}^T\jj-A_{\kk}^T\kk .$$
The Frobenius norm of $A$ is
$$ \| A \|_{F}\doteq \sqrt{\sum_{i=1}^{n_1}\sum_{j=1}^{n_2}|A_{ij}|^2}. $$
With a simple calculation, it can be seen that
\begin{equation}\label{tracerepre}
     \| A\|_F^2=\Tr(AA^*)=\Tr(A^*A),
\end{equation}
where $\Tr(\cdot)$ is the trace of a matrix.

Let $A\in\Q^{n\times n}$.  $A$ is a unitary matrix if and only if $AA^*=A^*A=I_{n}$, where $I_{n}\in\mathbb{R}^{n\times n}$ is the real $n\times n$ identity matrix. $A$ is invertible if $AB=BA=I_n$ for some $B\in\Q^{n\times n}$. The following lemma gives Some properties of invertible quaternion matrix which can be found in \cite[Theorem 4.1]{qsvd}.
\begin{lemma}\label{lemma21}
Let $A\in\Q^{n_1\times n_2}$ and $B\in\Q^{n_2\times n_3}$, then

\emph{(i)} $(AB)^*=B^*A^*$,

\emph{(ii)} $(AB)^{-1}=B^{-1}A^{-1}$ if $A$ and $B$ are invertible,

\emph{(iii)} $(A^*)^{-1}=(A^{-1})^*$ if $A$ is invertible.
\end{lemma}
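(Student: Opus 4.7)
The plan is to reduce each of the three identities to the scalar quaternion analogue $(xy)^{*}=y^{*}x^{*}$ together with the defining property of the inverse, and to do so in the order (i), (ii), (iii) because (iii) will lean on (i).

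For part (i), I would argue entry-wise. Writing $(AB)_{ij}=\sum_{k}A_{ik}B_{kj}$, the $(i,j)$ entry of $(AB)^{*}$ is by definition the quaternion conjugate of the $(j,i)$ entry, namely $\bigl(\sum_{k}A_{jk}B_{ki}\bigr)^{*}$. The scalar conjugation identity for quaternions applied term by term then gives $\sum_{k}B_{ki}^{*}A_{jk}^{*}$, which matches the $(i,j)$ entry of $B^{*}A^{*}$ directly from the definition of conjugate transpose. The only thing to be careful about is that quaternion multiplication is non-commutative, so the order reversal in $(xy)^{*}=y^{*}x^{*}$ is essential and must be invoked inside the sum before the indices are reorganized.

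For part (ii), the cleanest route is to verify that $B^{-1}A^{-1}$ is actually a two-sided inverse of $AB$. Using associativity of the standard matrix product over $\mathbb{H}$, I would compute $(AB)(B^{-1}A^{-1})=A(BB^{-1})A^{-1}=AIA^{-1}=I$ and symmetrically $(B^{-1}A^{-1})(AB)=I$. By the uniqueness of the two-sided inverse in the ring of $n\times n$ quaternion matrices, this forces $(AB)^{-1}=B^{-1}A^{-1}$.

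For part (iii), I would start from $AA^{-1}=A^{-1}A=I$, apply the conjugate transpose to both equalities, and then use part (i) to swap the order: $(A^{-1})^{*}A^{*}=I^{*}=I$ and $A^{*}(A^{-1})^{*}=I$. This exhibits $(A^{-1})^{*}$ as the (two-sided) inverse of $A^{*}$, giving $(A^{*})^{-1}=(A^{-1})^{*}$. There is no real obstacle here; the only point requiring attention throughout is that scalar entries fail to commute, so every manipulation that moves factors past one another has to be justified either by the definition of matrix multiplication (which never commutes entries) or by one of the already-established identities, rather than by any appeal to commutativity.
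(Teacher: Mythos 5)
Your argument is correct and complete: part (i) reduces to the anti-automorphism property $(xy)^{*}=y^{*}x^{*}$ of quaternion conjugation applied inside the entry-wise sum, part (ii) verifies a two-sided inverse directly, and part (iii) follows from (i) by conjugate-transposing $AA^{-1}=A^{-1}A=I$. The paper itself offers no proof of this lemma --- it simply cites Theorem 4.1 of Zhang's quaternion-matrix paper --- so there is nothing to compare against; your write-up supplies a self-contained justification of exactly the standard facts being invoked, with the non-commutativity of $\Q$ handled correctly at every step. The only cosmetic remark is that for parts (ii) and (iii) the matrices must of course be square, which the lemma's stated rectangular dimensions do not make explicit, but that is an issue with the statement rather than with your proof.
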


The following theorem for the SVD of quaternion matrix (QSVD) was given in \cite{qsvd}.
\begin{theorem}\label{thm21}
Any quaternion matrix $A\in\Q^{n_1\times n_2}$ has the following QSVD form
\begin{equation*}
    A=U\begin{bmatrix} \Sigma_r & O \\ O & O
    \end{bmatrix}V^* ,
\end{equation*}
where $U\in\Q^{n_1\times n_1}$ and $V\in\Q^{n_2\times n_2}$ are unitary, and $\Sigma_r=\diag\{ \sigma_1, \dots , \sigma_r \}$ is a real nonnegative diagonal matrix, with $\sigma_1\geq\cdots \geq\sigma_r$ as the singular values of $A$.
\end{theorem}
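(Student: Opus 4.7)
The plan is to reduce the quaternion SVD to the classical complex SVD via the complex adjoint representation of a quaternion matrix. First, using $\kk = \ii\jj$, any $A \in \Q^{n_1\times n_2}$ can be uniquely written as $A = A_1 + A_2\jj$ with $A_1, A_2 \in \mathbb{C}^{n_1\times n_2}$. I would then associate to $A$ its complex adjoint
\[
\chi(A) \;=\; \begin{bmatrix} A_1 & A_2 \\ -\overline{A_2} & \overline{A_1} \end{bmatrix} \in \mathbb{C}^{2n_1\times 2n_2}.
\]
Using the identity $c\jj = \jj\overline{c}$ for $c \in \mathbb{C}$, a direct block computation shows that $\chi$ is an $\mathbb{R}$-linear ring homomorphism preserving conjugate transposes, i.e. $\chi(A+B) = \chi(A)+\chi(B)$, $\chi(AB) = \chi(A)\chi(B)$, and $\chi(A^*) = \chi(A)^*$. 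In particular, $U \in \Q^{n\times n}$ is unitary if and only if $\chi(U) \in \mathbb{C}^{2n\times 2n}$ is unitary, and $\chi(I_n) = I_{2n}$.

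Next I would apply the classical complex SVD to write $\chi(A) = \widetilde{U}\,\widetilde{\Sigma}\,\widetilde{V}^*$. The crucial extra structure is that the image of $\chi$ is characterized by the symplectic symmetry $J_{n_1}\overline{\chi(A)} = \chi(A)\,J_{n_2}$, where $J_n = \begin{bmatrix} 0 & I_n \\ -I_n & 0 \end{bmatrix}$. This symmetry forces every nonzero singular value of $\chi(A)$ to occur with even multiplicity: if $v$ is a right singular vector for $\sigma>0$, then so is $J_{n_2}\overline{v}$, and a short calculation shows $\langle v, J_{n_2}\overline{v}\rangle = 0$. A symplectic Gram--Schmidt procedure applied inside each eigenspace of $\chi(A)^*\chi(A)$ then produces an orthonormal basis of $\chi$-pairs $\bigl(v_k, J_{n_2}\overline{v_k}\bigr)$, so that the resulting $\widetilde{V}$ has the block shape required to be in the image of $\chi$. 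The analogous procedure on the left produces $\widetilde{U} = \chi(U)$ and $\widetilde{V} = \chi(V)$ for unitary $U, V$ over $\Q$, while the paired singular values of $\chi(A)$ collapse into a real nonnegative diagonal $\Sigma_r$ on the quaternion side.

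Applying $\chi^{-1}$ to the complex decomposition and invoking the homomorphism properties then yields the asserted quaternion SVD $A = U\begin{bmatrix} \Sigma_r & O \\ O & O \end{bmatrix} V^*$. The main obstacle is the pairing step inside eigenspaces of $\chi(A)^*\chi(A)$ of dimension greater than two: the complex SVD only delivers an arbitrary orthonormal basis of each eigenspace, which need not be closed under the involution $v \mapsto J_{n_2}\overline{v}$. One therefore has to iteratively extract $\chi$-pairs by symplectic Gram--Schmidt, using at each step that the newly formed pair is automatically orthonormal and that its symplectic-orthogonal complement within the eigenspace inherits the same symmetry. This bookkeeping across multiplicities, together with checking that the final assembly realizes $\widetilde{V}$ and $\widetilde{U}$ exactly as $\chi(V)$ and $\chi(U)$, is the delicate part of the argument; everything else is the standard complex SVD plus the algebraic identities for $\chi$.
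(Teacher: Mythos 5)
The paper does not prove this theorem at all: it is imported verbatim from the reference \cite{qsvd} (``The following theorem for the SVD of quaternion matrix (QSVD) was given in \cite{qsvd}''), so there is no in-paper argument to compare against. Your proposal is a correct, self-contained proof along the standard lines that underlie that reference: writing $A=A_1+A_2\jj$, passing to the complex adjoint $\chi(A)$, and exploiting the symplectic symmetry $J_{n_1}\overline{\chi(A)}=\chi(A)J_{n_2}$ of the image of $\chi$. The individual claims all check out: $\chi$ is an $\mathbb{R}$-algebra homomorphism preserving $(\cdot)^*$; if $v$ is a unit eigenvector of $\chi(A)^*\chi(A)=\chi(A^*A)$ for $\sigma^2$ then so is $J_{n_2}\overline{v}$, and $\langle v, J_{n_2}\overline{v}\rangle=0$ by a direct computation, so nonzero singular values of $\chi(A)$ occur in pairs; and the symplectic Gram--Schmidt inside each eigenspace produces a basis closed under $v\mapsto J_{n_2}\overline{v}$, which is exactly the condition for the assembled $\widetilde{V}$ to equal $\chi(V)$ for some quaternion unitary $V$. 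Two small points to make explicit when writing this up: (i) the left singular vectors for $\sigma>0$ pair automatically, since $u=\chi(A)v/\sigma$ gives $J_{n_1}\overline{u}=\chi(A)J_{n_2}\overline{v}/\sigma$, but the orthonormal completion spanning $\ker\chi(A)^*$ must also be chosen symplectically so that $\widetilde{U}$ lies in the image of $\chi$; and (ii) the descending ordering $\sigma_1\geq\cdots\geq\sigma_r$ and the realness and nonnegativity of $\Sigma_r$ should be noted as inherited from the complex SVD. An alternative, slightly shorter route (the one effectively taken in \cite{qsvd}) is to first establish the spectral theorem for quaternion Hermitian matrices and then run the classical construction on $A^*A$ directly over $\Q$; your approach front-loads all the quaternionic difficulty into the single pairing lemma, which is a perfectly good trade.
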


By Theorem \ref{thm21}, the nuclear norm of $A$ is defined as $\| A \|_*=\sum\limits_{i=1}^{r}\sigma_i.$ The quaternion rank of $A$ is the number of its singular values, denoted as rank($A$).
 It follows from \cite[Lemma 9]{connectnuclear} that we can prove the following lemma, which reveals the relationship between the  matrix-factorization and the nuclear norm of a quaternion matrix.
\begin{lemma}\label{lemman22}
    For a given quaternion matrix $A\in\Q^{n_1\times n_2}$,
    \begin{equation}\label{zlp1}
        \|A\|_*=\min_{A=XY^*}\|X\|_F\|Y\|_F=\min_{A=XY^*}\frac{1}{2}(\|X\|_F^2+\|Y\|_F^2)
    \end{equation}
\end{lemma}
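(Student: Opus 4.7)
The plan is to establish the two equalities in \eqref{zlp1} separately: first prove that $\|A\|_\ast \le \min_{A=XY^\ast}\|X\|_F\|Y\|_F$ and then produce a specific factorization that realizes the lower bound, using the QSVD from Theorem \ref{thm21}. The second equality then follows from AM--GM combined with a scalar rescaling of the factors. The whole argument parallels the standard real/complex proof (as in \cite{connectnuclear}), so the real work is checking that each ingredient — unitary invariance of the Frobenius norm, Cauchy--Schwarz for quaternion-valued inner products, and the identity $\mathrm{tr}(M^\ast M)=\|M\|_F^2$ — survives the passage to $\Q$.

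For the upper bound, I would take the QSVD $A=U\bigl[\begin{smallmatrix}\Sigma_r & O \\ O & O\end{smallmatrix}\bigr]V^\ast$ from Theorem \ref{thm21} and set
\begin{equation*}
X \doteq U\begin{bmatrix}\Sigma_r^{1/2} & O\\ O & O\end{bmatrix}, \qquad
Y \doteq V\begin{bmatrix}\Sigma_r^{1/2} & O\\ O & O\end{bmatrix}.
\end{equation*}
Since $\Sigma_r$ is real nonnegative diagonal, its square root is unambiguous and commutes with anything. A direct computation using Lemma \ref{lemma21}(i) gives $XY^\ast=A$, and by \eqref{tracerepre} together with $U^\ast U=I_{n_1}$, $V^\ast V=I_{n_2}$ one finds $\|X\|_F^2=\|Y\|_F^2=\sum_{i=1}^r\sigma_i=\|A\|_\ast$. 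Hence $\|X\|_F\|Y\|_F=\frac{1}{2}(\|X\|_F^2+\|Y\|_F^2)=\|A\|_\ast$, which supplies the achieving factorization.

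For the lower bound, let $A=XY^\ast$ be any factorization and write the QSVD $A=U\Sigma V^\ast$ (with $\Sigma$ the zero-padded diagonal). Set $\tilde X\doteq U^\ast X$ and $\tilde Y\doteq V^\ast Y$, so $\tilde X \tilde Y^\ast=\Sigma$. Reading off the diagonal entries gives $\sigma_i=\sum_k (\tilde X)_{ik}(\tilde Y)_{ik}^\ast$, i.e.\ a quaternion inner product of the $i$-th rows of $\tilde X$ and $\tilde Y$. Applying the Cauchy--Schwarz inequality in the quaternion vector space (which holds in any right quaternion inner-product space, with norm $|\cdot|$ as defined after Lemma \ref{lemma21}), then the usual real Cauchy--Schwarz to the resulting sum, yields
\begin{equation*}
\|A\|_\ast=\sum_{i=1}^{r}\sigma_i \le \sum_i \|\tilde X_{i,:}\|_2\,\|\tilde Y_{i,:}\|_2 \le \|\tilde X\|_F\,\|\tilde Y\|_F.
\end{equation*}
Finally, the unitary invariance $\|\tilde X\|_F=\|U^\ast X\|_F=\|X\|_F$ (and likewise for $Y$) — which follows immediately from \eqref{tracerepre} and Lemma \ref{lemma21}(i) — closes the bound. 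This is the main technical step, and the only delicate point is confirming that quaternion Cauchy--Schwarz and the cyclicity $\Tr(X^\ast UU^\ast X)=\Tr(X^\ast X)$ behave as expected despite non-commutativity; both hold because these identities rest only on the trace being real-linear and $(AB)^\ast=B^\ast A^\ast$.

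The second equality is comparatively routine. AM--GM gives $\|X\|_F\|Y\|_F\le\tfrac{1}{2}(\|X\|_F^2+\|Y\|_F^2)$ for every $(X,Y)$, so $\min \tfrac{1}{2}(\|X\|_F^2+\|Y\|_F^2)\ge \min \|X\|_F\|Y\|_F$. For the reverse inequality, given any factorization $A=XY^\ast$ with $X,Y\ne 0$, rescale by a positive real $\alpha=\sqrt{\|Y\|_F/\|X\|_F}$ and replace $(X,Y)$ by $(\alpha X,\alpha^{-1}Y)$; since $\alpha$ is a real scalar it commutes with all quaternions, so $(\alpha X)(\alpha^{-1}Y)^\ast=XY^\ast=A$, while now $\|\alpha X\|_F=\|\alpha^{-1}Y\|_F$ and consequently $\tfrac{1}{2}(\|\alpha X\|_F^2+\|\alpha^{-1}Y\|_F^2)=\|\alpha X\|_F\|\alpha^{-1}Y\|_F=\|X\|_F\|Y\|_F$. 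Combined with the first equality this yields all three quantities are equal, completing the proof.
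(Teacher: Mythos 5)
Your proposal is correct and follows essentially the same route as the paper: the achieving factorization $X=U\Sigma_r^{1/2}$, $Y=V\Sigma_r^{1/2}$ (zero-padded) from the QSVD, the quaternion Cauchy--Schwarz argument applied to $\sigma_i=(U^*X)(i,:)\,\big((V^*Y)(i,:)\big)^*$ for the lower bound, and AM--GM for the remaining inequality. (Your choice $Y=V\Sigma_r^{1/2}$ in fact corrects a small slip in the paper, which writes $\tilde Y=\Sigma_r^{1/2}V$ and hence would not satisfy $\tilde X\tilde Y^*=A$; your explicit rescaling step for the reverse AM--GM inequality is an extra, harmless redundancy since the cyclic chain of inequalities already closes.)
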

\begin{proof}
    We denote three parts in \eqref{zlp1} as (i), (ii) and (iii) from left to right.\\
    (ii)$\leq$ (iii): This follows from the arithmetic mean and geometric mean inequality.\\
    (iii)$\leq$ (i): We decompose $A$ into the form in Theorem \ref{thm21}, and then set
    $$\tilde{X}=U\begin{bmatrix} {\Sigma_r}^{1/2} & O \\ O & O
    \end{bmatrix},\quad \tilde{Y}=\begin{bmatrix} {\Sigma_r}^{1/2} & O \\ O & O
    \end{bmatrix}V.$$
    Hence, $\tilde{X},\tilde{Y}$ are feasible matrices of (iii), and $\frac{1}{2}(\|\tilde{X}\|_F^2+\|\tilde{Y}\|_F^2)=\|A\|_*$, which implies (iii) $\leq$ (i).\\
    (i)$\leq$ (ii): For all $X,Y$ with $A=XY^*$, let $\textbf{u}_i,\ \textbf{v}_i$ be the $i$-th column of $U$ and $V$, respectively. Then
    \begin{align*}
        \|A\|_*&=\sum_{i=1}^{r}\textbf{u}_i^*A\textbf{v}_i =\sum_{i=1}^{r}(X^*\textbf{u}_i)^*(Y^*\textbf{v}_i) \\
        &\leq \sum_{i=1}^{r}\|X^*\textbf{u}_i\|_F\|Y^*\textbf{v}_i\|_F \\
        &\leq (\sum_{i=1}^{r}\|X^*\textbf{u}_i\|_F^2)^{1/2}(\sum_{i=1}^{r}\|Y^*\textbf{v}_i\|_F^2)^{1/2} \\
        &\leq \|X^*U\|_F\|Y^*V\|_F =\|X\|_F\|Y\|_F,
    \end{align*}
    where the first and second inequalities are from Cauchy-Schwarz inequality, which can be verified on quaternion, and the third inequality holds because we complete the rest part of $U$ and $V$.
\end{proof}

A third-order quaternion tensor $\mathcal{A}\in \mathbb{H}^{n_{1}\times n_{2}\times n_{3}}$ is expressed as $$\mathcal{A}=(\mathcal{A}_{i_{1}i_{2}i_{3}}),\  \mathcal{A}_{i_{1}i_{2}i_{3}}\in\mathbb{H},\ 1\leq i_{t}\leq n_{t},\ 1\leq t\leq 3.$$
Also, it can be expressed as
\begin{equation}\label{agqt}\A=\A_{\bf{e}}+\A_{\ii}\ii+\A_{\jj}\jj+\A_{\kk}\kk. \end{equation}
where $\A_{\bf{e}},\A_{\ii},\A_{\jj},\A_{\kk}\in\mathbb{R}^{n_1\times n_2\times n_3}$.
We use the Matlab notations $\mathcal{A}(i, :, :)$, $\mathcal{A}(:, j, :)$ and $\mathcal{A}(:, :, k)$ to denote its $i$-th horizontal, $j$-th lateral and $k$-th frontal slice, respectively. Let $A^{(k)}=\mathcal{A}(:, :, k)$ be the $k$-th ($k\in[n_3]$) frontal slice and $\A^*$ denote its conjugate transpose (see Section \ref{secQtproduct}). The Frobenius norm of $\mathcal{A}$ is the sum of all norms of its entries, i.e.,
\begin{flalign*}
	\| \mathcal{A} \|_{F}\doteq\sqrt{\sum_{i=1}^{n_{1}}\sum_{j=1}^{n_{2}}\sum_{k=1}^{n_{3}}|\mathcal{A}_{ijk}|^2}.
\end{flalign*}
The block circulant matrix ${\Circ}(\mathcal{A})\in \mathbb{H}^{n_{1}n_{3}\times n_{2}n_{3}}$ of $\mathcal{A}$ is defined as
\begin{equation*}
	\mbox{circ}(\mathcal{A})\doteq\begin{bmatrix}
		A^{(1)} & A^{(n_3)} & \cdots & A^{(2)} \\
		A^{(2)} & A^{(1)} & \cdots & A^{(3)} \\
		\vdots & \vdots & \ddots & \vdots \\
		A^{(n_3)} & A^{(n_3-1)} & \cdots & A^{(1)}
	\end{bmatrix}.
\end{equation*}
The operator ``$\unfold$" is defined as
$$\unfold(\A) \doteq [A^{(1)}; A^{(2)}; \dots ; A^{(n_3)}],$$ and its inverse operator ``$\fold$" is defined by $\fold(\unfold(\A))=\A$. The operator ``${\diag}$" of $\A$ is given as
\begin{equation*}
    {\diag}(\A)\doteq\begin{bmatrix}
		A^{(1)} &  &  &  \\
		 & A^{(2)} &  &  \\
		 &  & \ddots &  \\
		 &  &  & A^{(n_3)}
	\end{bmatrix}.
\end{equation*}

In 2022, Qin et al \cite{Qin2022}  noted that a quaternion $x=a+b\ii +c\jj +d\kk$ can be written as $x=(a+b\ii) +\jj(c-d\ii)$ and then the  quaternion tensor $\A$ can be written as the form of $\A=\A_{\bf{1},\ii}+\jj \A_{\jj,\kk}$ with $\A_{\bf{1},\ii},\A_{\jj,\kk}\in\mathbb{C}^{n_1\times n_2\times n_3}$. So, Qin et al \cite{Qin2022} defined the following quaternion tensor product named Qt-product, and they also give SVD of third-order quaternion tensor by using the Qt-product and the following DFT.

\begin{definition}[{\bf Qt-product} \cite{Qin2022}]\label{qtp}
	For $\A\in\Q^{n_1\times r\times n_3}$ and $\B\in\Q^{r\times n_2\times n_3}$, define
	\begin{equation*}
		\A*_{\Q}\B\doteq \fold((\Circ(\A_{\bf{1},\ii})+\jj \Circ(\A_{\jj,\kk})\cdot (P_{n_3}\otimes I_{r}))\cdot \unfold(\B))\in\Q^{n_1\times n_2\times n_3},
	\end{equation*}
where the symbol ``$\otimes$" means the Kronecker product, the matrix $P_{n_3}=(P_{ij})\in \mathbb{R}^{n_3 \times n_3}$ is a permutation matrix with $P_{11}=P_{ij}=1$ if $i+j={n_3}+2,\ 2\leq i,j \leq {n_3};\ P_{ij}=0,$ otherwise.
\end{definition}
The DFT used in \cite{Qin2022} is given as the form of the normalized DFT matrix $F_{n_3}\in \mathbb{C}^{n_3\times n_3}$ with
\begin{equation}\label{almdft}
F_{n_3}(i,j)=\frac{1}{\sqrt{n_3}}\omega^{(i-1)(j-1)},\quad \omega=\exp({-{2\pi \ii}/{n_3}}) \ \ \text{and} \ i,j\in[n_3].
\end{equation}
The DFT \eqref{almdft} plays an important role in the Qt-SVD of third-order quaternion tensor given in \cite{Qin2022}. As a generalization of the traditional Fourier transform, the quaternion Fourier transform was first defined by Ell  \cite{1993QDFT} to process quaternion signal.  Motivated by the idea of quaternion Fourier transform in \cite{1993QDFT} and from a more applicable perspective, in this paper we define a new quaternion DFT (QDFT) based on a unit pure quaternion $\mu=a\ii+b\jj+c\kk$ with $\mu^2=-1$,  which can be regards as a generalization of DFT \eqref{almdft}. And then, we introduce a novel multiplication principle for third-order quaternion tensor via the new defined QDFT.

\section{QDFT, gQt-Product and gQt-SVD}\label{secQtproduct}
\noindent
\par
One major contribution of this paper is
to introduce a novel multiplication principle for third-order quaternion tensor, named gQt-product,  via our new defined QDFT. The gQt-product is also the cornerstone of the quaternion tensor decomposition.
In this section, we will introduce  gQT-product and some relevant properties. Theorem \ref{mainthm} is one of the main results, which shows the relationship between gQt-product of quaternion tensors and  matrix product of their  QDFT matrices. Furthermore, we propose a new SVD of quaternion tensor via gQt-product, named gQt-SVD. With gQt-SVD, we can find the low-rank optimal approximation of quaternion tensor.

\subsection{QDFT: a new DFT of third-order quaternion tensor}
\noindent
\par
In this subsection, we define a new quaternion DFT (QDFT) based on a unit pure quaternion $\mu=a\ii+b\jj+c\kk$ with $\mu^2=-1$,  which can be regards as a generalization of DFT \eqref{almdft}.
Because the quaternion multiplication is not commutative, QDFT of vectors can be defined by the sum of components multiplied by the exponential kernel of the transform from the right or from the left.

Let $F_{\mu,n_3}\in\Q^{n_3\times n_3}$ be the normalized QDFT matrix with the $(i,j)$-th element as
\begin{equation}\label{qdftu} F_{\mu,n_3}(i,j)=\frac{1}{\sqrt{n_3}}\omega^{(i-1)(j-1)},\ \ i,j\in [n_3], \end{equation}
where the kernel $\omega$ is defined as
$$\omega=\exp(-\mu 2\pi/n_3)=\cos(2\pi/n_3)-\mu\sin(2\pi/n_3).$$
Obviously, it follows that
\begin{equation}\label{kernel}
   \omega^{p}=\exp(-\mu 2\pi p/n_3)=\cos(2\pi p/n_3)-\mu\sin(2\pi p/n_3),\quad  p\in \mathbb{Z}.
\end{equation}
Clearly, when $\mu=\ii$, QDFT matrix defined as \eqref{qdftu} is equal to DFT matrix given in \eqref{almdft}. So, QDFT is more applicable than DFT.

It is easy to see that the result of QDFT of $\A\in\Q^{n_1\times n_2\times n_3}$ is still a quaternion tensor $\hat{\A}\in\Q^{n_1\times n_2\times n_3}$ with
$$\hat{\A}(i,j,:)=\sqrt{n_3}F_{\mu,n_3}\A(i,j,:),\quad  i\in[n_1],\ j\in[n_2].$$
Moreover,
\begin{equation}\label{Qfrobeeq1}
    \|\hat{\A}\|_F^2=n_3\|\A\|_F^2.
\end{equation}
We also denote the QDFT of $\A$ as ${\fft}(\A)$, and its inverse operator ``${\ifft}$" is defined by ${\ifft}({\fft}(\A))=\A$.
It is known that any real circulant matrix can be diagonalized by the normalized DFT matrix \cite{cirfft}. For QDFT, by simple calculation, we also obtain the same result for any real tensor $\A\in\mathbb{R}^{n_1\times n_2\times n_3}$:
\begin{equation}\label{eqdiag}
    (F_{\mu,n_3}\otimes I_{n_1})\cdot {\Circ}(\A)\cdot (F_{\mu, n_3}^*\otimes I_{n_2})={\diag}(\hat{\A}).
\end{equation}

Denote $P_{n}=(P_{ij})\in \mathbb{R}^{n \times n}$ as a permutation matrix with $P_{11}=P_{ij}=1$ if $i+j=n+2,\ 2\leq i,j \leq n;\ P_{ij}=0,$ otherwise. We now give some special kernels of QDFT matrices, which will be used to introduce gQt-product. For a unit pure quaternion $\mu=a\ii+b\jj+c\kk$ with $\mu^2=-1$, set
\begin{equation}\label{kernel2}
    \mu_{\ii}=a\ii-b\jj-c\kk,\quad  \mu_{\jj}=-a\ii+b\jj-c\kk,\quad  \mu_{\kk}=-a\ii-b\jj+c\kk.
\end{equation}
The following lemma gives the relationship between QDFT matrices in the form of \eqref{qdftu} with different quaternion basis.
\begin{lemma}\label{lemmafm}
 For two unit pure quaternion numbers  $\mu_1$ and $\mu_2$ with $\mu_{1}^2=\mu_2^2=-1$,  it holds that
\begin{equation*} F_{\mu_1, n}^*F_{\mu_2, n}=\frac{1}{2}(1-\mu_1\mu_2)I_n+\frac{1}{2}(1+\mu_1\mu_2)P_n,\quad
 F_{\mu_1, n}F_{\mu_2, n}=\frac{1}{2}(1+\mu_1\mu_2)I_n+\frac{1}{2}(1-\mu_1\mu_2)P_n.
\end{equation*}
\end{lemma}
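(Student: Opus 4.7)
The plan is to prove the first identity by direct entrywise computation together with Fourier orthogonality; the second identity will then follow from the same argument after a single sign change. The principal bookkeeping subtlety is respecting quaternion non-commutativity, but since every trigonometric coefficient that enters is real and hence central, the only non-commuting product that ever appears is $\mu_1\mu_2$, which can be kept in a fixed order throughout.

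Fixing $(i,j)$ and setting $p=i-1$, $q=j-1$, I would write out the $(i,j)$-entry of $F_{\mu_1,n}^*F_{\mu_2,n}$ directly from \eqref{qdftu}. Using the identity $\overline{\omega_1^{m}}=\omega_1^{-m}$, which follows from $\omega_1$ lying in the commutative subring $\mathbb{R}[\mu_1]$, this entry becomes
\[
\frac{1}{n}\sum_{k=0}^{n-1}\omega_1^{-pk}\omega_2^{qk}=\frac{1}{n}\sum_{k=0}^{n-1}\bigl(\cos\tfrac{2\pi pk}{n}+\mu_1\sin\tfrac{2\pi pk}{n}\bigr)\bigl(\cos\tfrac{2\pi qk}{n}-\mu_2\sin\tfrac{2\pi qk}{n}\bigr).
\]
Expanding this product and pushing the real coefficients past the pure quaternion units produces four terms with quaternion coefficients $1,\mu_1,-\mu_2,-\mu_1\mu_2$, each multiplied by a real bilinear expression in the sines and cosines of $2\pi pk/n$ and $2\pi qk/n$.

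Next, I would apply product-to-sum identities to rewrite each such bilinear expression as a real combination of $\cos(2\pi(p\pm q)k/n)$ and $\sin(2\pi(p\pm q)k/n)$, and then invoke the elementary orthogonality relations $\sum_{k=0}^{n-1}\sin(2\pi mk/n)=0$ for every integer $m$, and $\sum_{k=0}^{n-1}\cos(2\pi mk/n)=n$ if $m\equiv 0\pmod n$ and $0$ otherwise. The $\mu_1$ and $\mu_2$ coefficient terms become pure sine sums and vanish, while the scalar and $\mu_1\mu_2$ contributions consolidate to give
\[
(F_{\mu_1,n}^*F_{\mu_2,n})_{ij}=\tfrac{1}{2}(1-\mu_1\mu_2)\,\mathbf{1}_{\{p\equiv q\}}+\tfrac{1}{2}(1+\mu_1\mu_2)\,\mathbf{1}_{\{p+q\equiv 0\}},
\]
where both congruences are modulo $n$.

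To finish, I would match the two indicator patterns to $I_n$ and $P_n$. For $p,q\in\{0,\dots,n-1\}$, the condition $p\equiv q\pmod n$ reduces to $i=j$, producing $I_n$; and $p+q\equiv 0\pmod n$ reduces to $i=j=1$ or $i+j=n+2$ with $2\leq i,j\leq n$, which is precisely the support pattern of $P_n$ defined just before the lemma. This establishes the first identity. The second identity follows by the identical computation starting from $\tfrac{1}{n}\sum_{k=0}^{n-1}\omega_1^{pk}\omega_2^{qk}$: replacing $\omega_1^{-pk}$ by $\omega_1^{pk}$ reverses the sign of $\mu_1$ throughout, which exchanges the coefficients $(1-\mu_1\mu_2)/2$ and $(1+\mu_1\mu_2)/2$ attached to $I_n$ and $P_n$.
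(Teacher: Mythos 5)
Your proof is correct, but it is organized differently from the paper's. The paper first splits the QDFT matrix into real cosine and sine parts, $F_{\mu_t,n}=C_n-\mu_t S_n$ and $F_{\mu_t,n}^*=C_n+\mu_t S_n$, then quotes the classical DFT identities $F_{\ii,n}^*F_{\ii,n}=I_n$ and $F_{\ii,n}F_{\ii,n}=P_n$ to read off the real matrix relations $C_n^2+S_n^2=I_n$, $C_n^2-S_n^2=P_n$ and $C_nS_n=S_nC_n=O$; the lemma then follows from the two-line expansion $(C_n+\mu_1S_n)(C_n-\mu_2S_n)=C_n^2-\mu_1\mu_2S_n^2=\frac12(1-\mu_1\mu_2)I_n+\frac12(1+\mu_1\mu_2)P_n$. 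You instead compute each entry from scratch via product-to-sum identities and the orthogonality sums $\sum_k\cos(2\pi mk/n)$ and $\sum_k\sin(2\pi mk/n)$, and then match the resulting indicator patterns to the supports of $I_n$ and $P_n$. The two arguments rest on the same underlying facts --- your explicit trigonometric sums are exactly what proves the quoted identities $F_{\ii,n}^*F_{\ii,n}=I_n$ and $F_{\ii,n}F_{\ii,n}=P_n$ in the first place --- so your route is more self-contained and makes the appearance of $P_n$ completely transparent (your identification of $p+q\equiv 0\pmod n$ with the support of $P_n$ is a detail the paper leaves implicit), while the paper's route is shorter because it reuses known properties of the complex DFT and never touches individual entries. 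Your handling of non-commutativity (real scalars are central, so only the ordered product $\mu_1\mu_2$ survives) matches the paper's implicit use of the same fact in the step $\mu_1S_nC_n-C_n\mu_2S_n-\mu_1S_n\mu_2S_n=-\mu_1\mu_2S_n^2$.
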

\begin{proof}
It follows from \eqref{qdftu} and \eqref{kernel} that
\begin{equation}\label{cs}
    F_{\mu_{t}, n}=C_n-\mu_{t} S_n, \quad  F_{\mu_{t}, n}^*=C_n+\mu_{t} S_n, \quad t=1,2,
\end{equation}
where $C_n=(C_{ij})\in\mathbb{R}^{n\times n}$ and $S_n=(S_{ij})\in\mathbb{R}^{n\times n}$ satisfy
$$C_{ij}=\frac{1}{\sqrt{n}}\cos(2\pi (i-1)(j-1)/n), \quad S_{ij}=\frac{1}{\sqrt{n}}\sin(2\pi (i-1)(j-1)/n).$$
By \eqref{almdft} and \eqref{qdftu}, the traditional DFT matrix $F_n$ can be written as $F_{\ii, n}$. By simple computation, we have
$$F_{\ii, n}^*F_{\ii, n}=I_n, \quad F_{\ii, n}F_{\ii, n}=P_n.$$
Combining \eqref{cs}, it is easy to get
$$ C_n^2+S_n^2=I_n, \quad C_n^2-S_n^2=P,\ C_nS_n=S_nC_n=O.$$
Hence,
\begin{align*}
    F_{\mu_1, n}^*F_{\mu_2, n}&=(C_n+\mu_1 S_n)(C_n-\mu_2 S_n)=C_n^2-\mu_1\mu_2S_n^2 \\
    &=\frac{1}{2}(1-\mu_1\mu_2)I_n+\frac{1}{2}(1+\mu_1\mu_2)P_n.
\end{align*}
In the same way, we can prove that $F_{\mu_1, n}F_{\mu_2, n}=\frac{1}{2}(1+\mu_1\mu_2)I_n+\frac{1}{2}(1-\mu_1\mu_2)P_n. $
\end{proof}

\subsection{gQt-product: a new product between third-order quaternion tensors}
\noindent
\par
In this subsection, based on QDFT \eqref{qdftu} and the form of quaternion tensor $\A$ as \eqref{agqt}, we introduce the concept of gQt-product and define identity quaternion tensor and inverse quaternion tensor. Moreover, the relation of gQt-product of quaternion tensors and  matrix product of their  QDFT matrices is given.

\begin{definition}[{\bf gQt-product}]\label{defpro}
For any given unit pure quaternion $\mu=a\ii+b\jj+c\kk$ with $\mu^2=-1$, let $\mu_{\ii},\mu_{\jj},\mu_{\kk}$ be given as \eqref{kernel2} and matrices $T_{\ii},\ T_{\jj},\ T_{\kk}$ be defined as $$T_{\ii}=F_{\mu_{\ii},n_3}^*F_{\mu,n_3},\quad  T_{\jj}=F_{\mu_{\jj},n_3}^*F_{\mu, n_3},\quad T_{\kk}=F_{\mu_{\kk},n_3}^*F_{\mu, n_3}. $$
Define gQt-product of
	$\A\in\Q^{n_1\times r\times n_3}$ and $\B\in\Q^{r\times n_2\times n_3}$ as
	\begin{equation*}
		\A\pro\B\doteq {\fold}\big( ({\Circ}(\A_{\bf{e}})+\ii {\Circ}(\A_{\ii})\cdot (T_{\ii}\otimes I_{r})+\jj {\Circ}(\A_{\jj})\cdot (T_{\jj}\otimes I_{r})+\kk {\Circ}(\A_{\kk})\cdot (T_{\kk}\otimes I_{r}) )\cdot {\unfold}(\B)\big).
	\end{equation*}
Clearly, $\A\pro\B\in\Q^{n_1\times n_2\times n_3}$.
\end{definition}

We have the following remarks for Definition \ref{defpro}.
\begin{remark}{\rm
    When $\mu=\ii$, gQt-product ``$\pro$" becomes Qt-product ``$*_{\Q}$" in Definition \ref{qtp} \cite{Qin2022}. Moreover, if we restrict the product to real tensors, gQt-product is just t-product defined in \cite{tproduct}. In this view, gQt-product is a generalization of t-product in a wider field.}
\end{remark}

\begin{remark}{\rm
 For any unit pure quaternion $\mu=a\ii+b\jj+c\kk$ with $a^2+b^2+c^2=1$, by Lemma \ref{lemmafm}, we can get
    \begin{align*}
        &F_{\mu_{\ii}, n_3}^*F_{\mu, n_3}=\big(a^2-a(b\kk-c\jj)\big)I_{n_3}+\big(1-a^2+a(b\kk-c\jj)\big)P_{n_3} , \\
        &F_{\mu_{\jj}, n_3}^*F_{\mu, n_3}=\big(b^2-b(c\ii-a\kk)\big)I_{n_3}+\big(1-b^2+b(c\ii-a\kk)\big)P_{n_3} , \\
        &F_{\mu_{\kk}, n_3}^*F_{\mu, n_3}=\big(c^2-c(a\jj-b\ii)\big)I_{n_3}+\big(1-c^2+c(a\jj-b\ii)\big)P_{n_3}.
    \end{align*}
 Hence, it is easy to see that  $T_{\ii}, T_{\jj}, T_{\kk}$ is the combination of identity matrix $I_{n_3}$ and permutation matrix $P_{n_3}$, with the sum of the coefficients  being one. In this vision, the operator from ${\Circ}(\A)$ to ${\Circ}(\A_{\bf{e}})+\ii {\Circ}(\A_{\ii})\cdot (T_{\ii}\otimes I_{r})+\jj {\Circ}(\A_{\jj})\cdot (T_{\jj}\otimes I_{r})+\kk {\Circ}(\A_{\kk})\cdot (T_{\kk}\otimes I_{r})$ will not change the magnitude, so there is no loss of information in gQt-product.}
\end{remark}

We now present the relation of gQt-product of quaternion tensors and  matrix product of their  QDFT matrices in the following theorem. For any given $\A\in\Q^{n_1\times n_2\times n_3}$, setting its QDFT tensor as $\hat{\A}=\fft(\A)$, then we have
\begin{equation}\label{eqfft1}
   {\unfold}(\hat{\A})=\sqrt{n_3}(F_{\mu, n_3}\otimes I_{n_1})\cdot {\unfold}(\A).
\end{equation}
\begin{theorem}\label{mainthm}
    Let $\A\in\Q^{n_1\times r\times n_3}$, $\B\in\Q^{r\times n_2\times n_3}$ and $\mathcal{C}\in \Q^{n_1\times n_2\times n_3}$, $\hat{\A},\hat{\B},\hat{\mathcal{C}}$ be their QDFT tensors. Then,  \begin{equation*}
    {\diag}(\hat{\mathcal{C}})={\diag}(\hat{\A})\cdot {\diag}(\hat{\B}) \quad \Leftrightarrow \quad \mathcal{C}=\A\pro\B.
    \end{equation*}
\end{theorem}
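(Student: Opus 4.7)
\emph{Plan.} Write $M(\A)$ for the inner quaternion matrix appearing in Definition \ref{defpro}, so that $\unfold(\A\pro\B) = M(\A)\unfold(\B)$. Left-multiplying by $(F_{\mu,n_3}\otimes I_{n_1})$, inserting $(F_{\mu,n_3}^* F_{\mu,n_3})\otimes I_r = I$, and using \eqref{eqfft1} shows that $\mathcal{C} = \A\pro\B$ is equivalent to $\unfold(\hat{\mathcal{C}}) = (F_{\mu,n_3}\otimes I_{n_1}) M(\A) (F_{\mu,n_3}^*\otimes I_r)\cdot \unfold(\hat{\B})$. Since $\diag(\cdot)$ is block-diagonal and $\unfold(\cdot)$ stacks frontal slices, $\diag(\hat{\mathcal{C}}) = \diag(\hat{\A})\diag(\hat{\B})$ is also equivalent to $\unfold(\hat{\mathcal{C}}) = \diag(\hat{\A})\unfold(\hat{\B})$. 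Both directions of the theorem therefore follow from the single matrix identity
\[(F_{\mu,n_3}\otimes I_{n_1})\, M(\A)\,(F_{\mu,n_3}^*\otimes I_r) = \diag(\hat{\A}). \qquad (\star)\]

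The key technical tool is the algebraic identity $\mu q = q\mu_q$ for $q\in\{\ii,\jj,\kk\}$, which is immediate from \eqref{kernel2} and yields the entrywise twist $F_{\mu,n_3}(k,l)\cdot q = q\cdot F_{\mu_q,n_3}(k,l)$. Applying this twist inside the QDFT sum defining $\hat{\A}$ first produces the preliminary four-part decomposition
\[\hat{\A} = \fft_\mu(\A_{\bf{e}}) + \ii\,\fft_{\mu_\ii}(\A_\ii) + \jj\,\fft_{\mu_\jj}(\A_\jj) + \kk\,\fft_{\mu_\kk}(\A_\kk),\]
where $\fft_\nu$ denotes the QDFT formula \eqref{qdftu} with base pure quaternion $\nu$. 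To verify $(\star)$, I would split $M(\A)$ into its four summands. The real summand is \eqref{eqdiag} applied to $\A_{\bf{e}}$ with base $\mu$, yielding $\diag(\fft_\mu(\A_{\bf{e}}))$. For the $\ii$-summand, the definition $T_\ii = F_{\mu_\ii,n_3}^* F_{\mu,n_3}$ together with the unitarity $F_{\mu,n_3}F_{\mu,n_3}^* = I$ (from Lemma \ref{lemmafm}) collapses $(T_\ii\otimes I_r)(F_{\mu,n_3}^*\otimes I_r)$ into $F_{\mu_\ii,n_3}^*\otimes I_r$; then the same entrywise twist gives $(F_{\mu,n_3}\otimes I_{n_1})\cdot\ii = \ii\cdot(F_{\mu_\ii,n_3}\otimes I_{n_1})$, moving the scalar $\ii$ to the leftmost position while simultaneously swapping the base $\mu\to\mu_\ii$. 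What remains is $\ii\cdot(F_{\mu_\ii,n_3}\otimes I_{n_1})\Circ(\A_\ii)(F_{\mu_\ii,n_3}^*\otimes I_r)$, which by \eqref{eqdiag} applied with base $\mu_\ii$ equals $\ii\cdot\diag(\fft_{\mu_\ii}(\A_\ii))$. The $\jj$- and $\kk$-summands are handled symmetrically, and summing the four pieces matches the decomposition of $\hat{\A}$ blockwise, establishing $(\star)$.

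The primary obstacle is the careful bookkeeping forced by quaternion non-commutativity: a scalar such as $\ii$ does not slide past matrix entries in ${\rm span}\{1,\mu\}$, so every rearrangement must be mediated by the twisting identities $\mu q = q\mu_q$, which swap the base quaternion inside $F_{\mu,n_3}$. The delicate point is that these twists compose so that the $q$-th summand of $(F_{\mu,n_3}\otimes I)M(\A)(F_{\mu,n_3}^*\otimes I)$ emerges as exactly $q\cdot\diag(\fft_{\mu_q}(\A_q))$, reproducing the four-term decomposition of $\hat{\A}$ term by term.
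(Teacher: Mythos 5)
Your proposal is correct and follows essentially the same route as the paper: the paper likewise establishes the identity $(F_{\mu,n_3}\otimes I_{n_1})\,M(\A)\,(F_{\mu,n_3}^*\otimes I_{r})=\diag(\hat{\A})$ (in the form of its equation \eqref{eqdiagA}) by using the twist $\mu q=q\mu_q$ to move each imaginary unit past $F_{\mu,n_3}$ while swapping the base to $\mu_q$, then applies \eqref{eqdiag} and \eqref{eqfft1} summand by summand. Your version is marginally more explicit about why the single matrix identity yields both directions of the equivalence, but the decomposition, the key lemmas, and the bookkeeping are the same.
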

\begin{proof}
	Clearly, $\ii F_{\mu,n_3}=F_{\mu_{\ii}, n_3}\ii$. Thus, it follows from \eqref{eqfft1} and \eqref{eqdiag} that
	\begin{align}
	    \diag\big( fft(\A_{\ii}\ii)\big)&=\diag\Big( \fold\big( \sqrt{n_3}(F_{\mu, n_3}\otimes I_{n_1})\cdot \unfold(\A_{\ii}\ii) \big) \Big) \nonumber \\
	    &=\diag\Big( \fold\big( \sqrt{n_3}(F_{\mu, n_3}\otimes I_{n_1})\cdot \unfold(\A_{\ii}) \big) \Big)\ii \nonumber \\
	    &= \diag\big( \fft(\A_{\ii})\big)\ii \nonumber \\
	    &=(F_{\mu,n_3}\otimes I_{n_1})\cdot \Circ(\A_{\ii})\cdot (F_{\mu, n_3}^*\otimes I_{n_2})\ii \nonumber \\
	    &=(F_{\mu,n_3}\otimes I_{n_1})\cdot \ii \Circ(\A_{\ii})\cdot (F_{\mu_{\ii}, n_3}^*\otimes I_{n_2}), \label{eqii1}
	\end{align}
	where the second equality holds due to the homogeneity of operators ``$\fold$", ``$\unfold$" and ``$\diag$".
	Similarly, we also have
	\begin{align}
	    \diag\big( \fft(\A_{\jj}\jj)\big)&=(F_{\mu,n_3}\otimes I_{n_1})\cdot \jj \Circ(\A_{\jj})\cdot (F_{\mu_{\jj}, n_3}^*\otimes I_{n_2}), \label{eqjj1} \\
	   \diag\big( \fft(\A_{\kk}\kk)\big)&=(F_{\mu,n_3}\otimes I_{n_1})\cdot \kk \Circ(\A_{\kk})\cdot (F_{\mu_{\kk}, n_3}^*\otimes I_{n_2}). \label{eqkk1}
	\end{align}
	So, by \eqref{eqii1}, \eqref{eqjj1} and \eqref{eqkk1}, it holds that
	\begin{align}
	    \diag(\hat{\A})&=\diag\big(\fft(\A_{\bf{e}})\big)+\diag\big( \fft(\A_{\ii}\ii)\big)+\diag\big( \fft(\A_{\jj}\jj)\big)+\diag\big( \fft(\A_{\kk}\kk)\big) \nonumber \\
	    &=(F_{\mu,n_3}\otimes I_{n_1})\cdot \big( \Circ(\A_{\bf{e}})\cdot (F_{\mu, n_3}^*\otimes I_{n_2})+ \ii \Circ(\A_{\ii})\cdot (F_{\mu_{\ii}, n_3}^*\otimes I_{n_2}) \nonumber \\
	    &\quad +\jj \Circ(\A_{\jj})\cdot (F_{\mu_{\jj}, n_3}^*\otimes I_{n_2})+\kk \Circ(\A_{\kk})\cdot (F_{\mu_{\kk}, n_3}^*\otimes I_{n_2}) \big), \label{eqdiagA}
	\end{align}
which implies
	\begin{align*}
	    \unfold\big( \hat{\C}\big)&=\unfold\big( \fft(\A\pro\B)\big) \\
	    &=\sqrt{n_3}(F_{\mu, n_3}\otimes I_{n_1})\cdot \unfold(\A\pro\B) \\
	    &=\sqrt{n_3}(F_{\mu, n_3}\otimes I_{n_1})\cdot \big( (\Circ(\A_{\bf{e}})+\ii \Circ(\A_{\ii})\cdot (F_{\mu_{\ii},n_3}^*F_{\mu,n_3}\otimes I_{r})\\
	    &\quad +\jj \Circ(\A_{\jj})\cdot (F_{\mu_{\jj},n_3}^*F_{\mu,n_3}\otimes I_{r})+\kk \Circ(\A_{\kk})\cdot (F_{\mu_{\kk},n_3}^*F_{\mu,n_3}\otimes I_{r}) \big)\cdot \unfold(\B))\\
	    &=\diag(\hat{\A})\cdot \sqrt{n_3}(F_{\mu,n_3}\otimes I_{r})\cdot \unfold(\B)\\
	    &=\diag(\hat{\A})\cdot \unfold(\hat{\B}).
	\end{align*}
Thus, the proof is completed.
\end{proof}

We next to discuss the group-theoretical property of gQt-product in the following theorem. At first, we introduce the concepts of {\it identity quaternion tensor} and {\it inverse quaternion tensor}.
\begin{definition}
    The $n\times n\times l$  identity  quaternion tensor $\mathcal{I}_{nnl}$ is the tensor whose first frontal slice is the identity matrix and  others are all zeros.
\end{definition}
\begin{definition}
    An $n\times n\times l$ quaternion tensor $\A$ is said to be invertible if there exists a quaternion tensor $\B\in\Q^{n\times n\times l}$ such that
    $$\A\pro\B=\mathcal{I}_{nnl}=\B\pro\A.$$
The tensor $\B$ is called the inverse of $\A$, denoted as $\A^{-1}$.
\end{definition}
\begin{theorem}
The collection of all invertible $n\times n\times l$ quaternion tensors forms a group under the ``$\pro$" operation given  in Definition \ref{defpro}.
\end{theorem}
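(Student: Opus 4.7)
The plan is to reduce everything to the block-diagonalised picture furnished by Theorem \ref{mainthm}, which converts $\pro$ into ordinary matrix multiplication of $\diag(\hat{\cdot})$. The four group axioms (closure, associativity, identity, inverses) then follow from the corresponding facts for quaternion matrices, assembled via Lemma \ref{lemma21}.

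First I would verify that $\mathcal{I}_{nnl}$ really acts as a two-sided identity. Since the only nonzero tube of $\mathcal{I}_{nnl}$ is $\mathcal{I}_{nnl}(i,i,:) = (1,0,\ldots,0)^{T}$ for $i \in [n]$, applying $\hat{\mathcal{A}}(i,j,:)=\sqrt{l}\,F_{\mu,l}\,\mathcal{A}(i,j,:)$ shows that every frontal slice of $\widehat{\mathcal{I}_{nnl}}$ equals $I_{n}$, so $\diag(\widehat{\mathcal{I}_{nnl}}) = I_{nl}$. Theorem \ref{mainthm} then gives $\diag(\widehat{\mathcal{A}\pro\mathcal{I}_{nnl}}) = \diag(\hat{\mathcal{A}})\cdot I_{nl} = \diag(\hat{\mathcal{A}})$, and since $\diag(\hat{\cdot})$ together with $\fft$ are invertible operations, $\mathcal{A}\pro\mathcal{I}_{nnl} = \mathcal{A}$; likewise on the other side.

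Next I would handle associativity and closure simultaneously. For $\mathcal{A},\mathcal{B},\mathcal{C}\in\Q^{n\times n\times l}$, Theorem \ref{mainthm} yields $\diag(\widehat{(\mathcal{A}\pro\mathcal{B})\pro\mathcal{C}}) = \diag(\hat{\mathcal{A}})\cdot\diag(\hat{\mathcal{B}})\cdot\diag(\hat{\mathcal{C}}) = \diag(\widehat{\mathcal{A}\pro(\mathcal{B}\pro\mathcal{C})})$, and injectivity of $\diag(\widehat{\cdot})$ delivers associativity. For closure, if $\mathcal{A}$ and $\mathcal{B}$ are invertible with inverses $\mathcal{A}^{-1},\mathcal{B}^{-1}$, set $\mathcal{D}=\mathcal{B}^{-1}\pro\mathcal{A}^{-1}$; a direct expansion using associativity and the identity axiom gives $(\mathcal{A}\pro\mathcal{B})\pro\mathcal{D} = \mathcal{I}_{nnl} = \mathcal{D}\pro(\mathcal{A}\pro\mathcal{B})$, so $\mathcal{A}\pro\mathcal{B}$ is invertible.

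The existence of inverses is already built into the definition of the set, so nothing needs to be shown there beyond noting that $\mathcal{I}_{nnl}$ is its own inverse (so the set is nonempty). The step I expect to be slightly delicate is the identity verification, since one must track the QDFT of a tube whose entries are not real; the normalisation constant $\sqrt{l}$ in the relation $\hat{\mathcal{A}}(i,j,:) = \sqrt{l}\,F_{\mu,l}\,\mathcal{A}(i,j,:)$ and the first-column structure of $F_{\mu,l}$ must line up exactly to produce $I_n$ on every frontal slice. Everything else is routine transport of structure through the isomorphism $\mathcal{A}\mapsto\diag(\hat{\mathcal{A}})$ provided by Theorem \ref{mainthm} combined with Lemma \ref{lemma21}(ii).
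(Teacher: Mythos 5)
Your proposal is correct and follows essentially the same route as the paper: reduce everything to block-diagonal quaternion matrix multiplication via Theorem \ref{mainthm}, check that $\diag(\hat{\mathcal{I}}_{nnl})=I_{nl}$ gives the two-sided identity, and transport associativity (and, in your case, also closure under inversion, which the paper leaves implicit) through the map $\A\mapsto\diag(\hat{\A})$. The identity step you flag as delicate is in fact harmless, since the entries of $\mathcal{I}_{nnl}$ are real and the first column of $F_{\mu,l}$ is $\frac{1}{\sqrt{l}}(1,\ldots,1)^{T}$, exactly as you describe.
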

\begin{proof}
    It is easy to see that
    \begin{equation*}
        \diag(\hat{\mathcal{I}}_{nnl})=I_{nl}\in\mathbb{R}^{nl\times nl}.
    \end{equation*}
    For all $\A\in\Q^{n\times n\times l}$, it follows from  Theorem \ref{mainthm} that
    \begin{equation*}
        \diag(\hat{\A})\diag(\hat{\mathcal{I}}_{nnl})=\diag(\hat{\mathcal{I}}_{nnl})\diag(\hat{\A})=\diag(\hat{\A}) \quad \Leftrightarrow \quad \A\pro\mathcal{I}_{nnl}=\mathcal{I}_{nnl}\pro\A=\A,
    \end{equation*}
which implies that  $\mathcal{I}_{nnl}$ is the identical-element in group.

    Similarly, for all $\A,\B,\C\in\Q^{n\times n\times l}$, it holds that
    \begin{equation*}
        \big( \diag(\hat{\A})\diag(\hat{\B})\big) \diag(\hat{\C})= \diag(\hat{\A})\big( \diag(\hat{\B}) \diag(\hat{\C})\big) \quad \Rightarrow\quad (\A\pro\B)\pro\C=\A\pro(\B\pro\C).
    \end{equation*}
    So, the ``$\pro$" operation is associative. Hence, the collection of all invertible $n\times n\times l$ quaternion tensors forms a group under the ``$\pro$" operation.
\end{proof}

We can easily check that the collection of all invertible $n\times n\times l$ quaternion tensors forms a ring under  standard tensor addition and gQt-product.

\subsection{gQt-SVD: gQt-product based SVD of third-order quaternion tensor }
\noindent
\par
Our goal in this subsection is to build  SVD of third-order quaternion tensor based on gQt-product. To begin with, we introduce the concepts of {\it conjugate transpose} of third-order quaternion tensor and {\it unitary quaternion tensor}, which will be used in the sequent.

For real third-order tensors,  the definition of {\it conjugate transpose} was given in \cite[Definition 3.14]{tproduct}, and we recall it in Definition \ref{def314}.
\begin{definition}\label{def314}
    Let $\A\in\mathbb{R}^{n_1\times n_2\times n_3}$, then $\A^*\in \mathbb{R}^{n_2\times n_1\times n_3}$, the conjugate transpose of $\A$, is obtained by transposing each of the frontal slices and then reversing the order of transposed frontal slices 2 through $n_3$.
\end{definition}

\begin{example}{\rm
Let $\A\in\mathbb{R}^{n_1\times n_2\times 4}$ and its frontal slices be given by the $n_1\times n_2$ matrices $A^{(1)},A^{(2)},A^{(3)},A^{(4)}$. Then, the conjugate transpose of $\A$ is
\begin{equation*}
    \A^*=\fold\left( \begin{bmatrix}
		(A^{(1)})^{*} \\
		(A^{(4)})^{*} \\
	(A^{(3)})^{*} \\
	(A^{(2)})^{*}	
	\end{bmatrix}\right).
\end{equation*}}
\end{example}

For any $\A\in\mathbb{R}^{n_1\times n_2\times n_3}$, it is seen that
    \begin{equation}\label{eqcirc1}
        \Circ(\A)^*=\Circ(\A^*).
    \end{equation}

For any given $\A\in\Q^{n_1\times n_2\times n_3}$, by \eqref{agqt}, it can be written as $\A=\A_{\bf{e}}+\A_{\ii}\ii+\A_{\jj}\jj+\A_{\kk}\kk$ where $\A_{\bf{e}}, \A_{\ii}, \A_{\jj}$ and $\A_{\kk}$ are $n_1\times n_2\times n_3$ real tensors. Hence, based on Definition \ref{def314}, we can define the conjugate transpose of the quaternion tensor $\A$.
\begin{definition}
    The conjugate transpose of a quaternion tensor $\A=\A_{\bf{e}}+\A_{\ii}\ii+\A_{\jj}\jj+\A_{\kk}\kk\in\Q^{n_1\times n_2\times n_3}$ is also denoted as $\A^*\in\Q^{n_2\times n_1 \times n_3}$, which is defined by
    $$\unfold(\A^*)=\unfold(\A_{\bf{e}}^*)-(T_{\ii}^*\otimes I_{n_2})\unfold(\A_{\ii}^{*})\ii-(T_{\jj}^*\otimes I_{n_2})\unfold(\A_{\jj}^{*})\jj-(T_{\kk}^*\otimes I_{n_2})\unfold(\A_{\kk}^{*})\kk ,$$
    where $T_{\ii}^*=F_{\mu,n_3}^*F_{\mu_{\ii},n_3}$, $T_{\jj}^*=F_{\mu, n_3}^*F_{\mu_{\jj},n_3}$, and $T_{\kk}^*=F_{\mu, n_3}^*F_{\mu_{\kk},n_3}$.
\end{definition}
\begin{example}{\rm
    Let $\A\in\Q^{n_1\times n_2\times 3}$ and its frontal slices be given by the matrices $A^{(1)},A^{(2)},A^{(3)}\in\Q^{n_1\times n_2}$. Setting $\mu=\ii$, the conjugate transpose of $\A$ is given as
    \begin{equation*}
    \A^*=\fold\left( \begin{bmatrix}
		(A^{(1)}_{\bf{e}})^*-\ii (A^{(1)}_{\ii})^{*}-\jj (A^{(1)}_{\jj})^*-\kk (A^{(1)}_{\kk})^{*} \\
		(A^{(3)}_{\bf{e}})^*-\ii (A^{(3)}_{\ii})^{*}-\jj (A^{(2)}_{\jj})^*-\kk (A^{(2)}_{\kk})^{*} \\
    	(A^{(2)}_{\bf{e}})^*-\ii (A^{(2)}_{\ii})^{*}-\jj (A^{(3)}_{\jj})^*-\kk (A^{(3)}_{\kk})^{*} \\
	\end{bmatrix}\right).
\end{equation*}}
\end{example}

We give some properties of the conjugate transpose of a quaternion tensor $\A$.
\begin{theorem}\label{lemma32}
For $\A\in\Q^{n_1\times n_2\times n_3}$, we have $\diag\big(\fft(\A^*)\big)=\diag(\hat{\A})^*$.
\end{theorem}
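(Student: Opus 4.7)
My plan is to compute $\diag(\hat\A)^*$ and $\diag(\fft(\A^*))$ independently from first principles and check that they agree. The key algebraic tools will be: (i) the commutation identity $\ii F_{\mu,n_3}^* = F_{\mu_\ii,n_3}^*\ii$ (and its $\jj,\kk$ analogs), which follows from the direct check $\ii\mu = \mu_\ii\ii$ via \eqref{kernel2} combined with the decomposition $F_{\mu,n_3}^* = C_{n_3}+\mu S_{n_3}$ used in the proof of Lemma \ref{lemmafm}; (ii) the unitarity $F_{\mu,n_3}F_{\mu,n_3}^*=I_{n_3}$, which is the $\mu_1=\mu_2=\mu$ specialization of Lemma \ref{lemmafm} and which gives $F_{\mu,n_3}T_\ell^* = F_{\mu_\ell,n_3}$ for each $\ell\in\{\ii,\jj,\kk\}$; and (iii) the fact that $\mu_\ell^2=-1$, so that \eqref{eqdiag} applies equally well with kernel $\mu_\ell$ in place of $\mu$. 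Equation \eqref{eqcirc1}, Lemma \ref{lemma21}, and $\ii^*=-\ii$ (and the $\jj,\kk$ analogs) will also be used throughout.

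For the first quantity, I would take the conjugate transpose of \eqref{eqdiagA}. Applying $(XY)^*=Y^*X^*$, using $\Circ(\A_\ell)^*=\Circ(\A_\ell^*)$ from \eqref{eqcirc1} and $\ell^*=-\ell$, and then pushing each scalar $\ell\in\{\ii,\jj,\kk\}$ past the remaining $(F_{\mu,n_3}^*\otimes I_{n_1})$ factor via the commutation identity (which converts that kernel from $\mu$ to $\mu_\ell$), I expect to arrive at
\begin{equation*}
\diag(\hat\A)^* = (F_{\mu,n_3}\otimes I_{n_2})\Circ(\A_{\bf e}^*)(F_{\mu,n_3}^*\otimes I_{n_1}) - \sum_{\ell\in\{\ii,\jj,\kk\}}(F_{\mu_\ell,n_3}\otimes I_{n_2})\Circ(\A_\ell^*)(F_{\mu_\ell,n_3}^*\otimes I_{n_1})\,\ell.
\end{equation*}

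For the second, I would begin from the definition of $\A^*$ and left-multiply by $\sqrt{n_3}(F_{\mu,n_3}\otimes I_{n_2})$ to produce $\unfold(\fft(\A^*))$ via \eqref{eqfft1}; the identity $F_{\mu,n_3}T_\ell^*=F_{\mu_\ell,n_3}$ then collapses each transfer term into the $\unfold$ of a pure $\mu_\ell$-QDFT applied to the real tensor $\A_\ell^*$. Folding and then taking $\diag$ (both linear and compatible with right multiplication by a quaternion scalar), followed by one application of \eqref{eqdiag} per term with kernels $\mu,\mu_\ii,\mu_\jj,\mu_\kk$ respectively, reproduces the same right-hand side displayed above, completing the proof. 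The main obstacle is the bookkeeping for non-commutativity: whenever a quaternion scalar $\ell$ crosses one of the $F_{\mu,n_3}^{(*)}$ matrices, the kernel must swap between $\mu$ and $\mu_\ell$. The definition of $\A^*$ is engineered via $T_\ell^* = F_{\mu,n_3}^*F_{\mu_\ell,n_3}$ precisely so that the kernel swaps generated in this second computation mirror those generated by the conjugate transpose in the first; verifying this alignment is the crux of the argument, and once it is in place the rest is routine linear algebra.
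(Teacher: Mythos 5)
Your proposal is correct and follows essentially the same route as the paper: both arguments rest on the expansion \eqref{eqdiagA}, the identities \eqref{eqcirc1} and \eqref{eqdiag} (applied with the kernels $\mu_\ii,\mu_\jj,\mu_\kk$, each of which squares to $-1$), the commutation $\ell F_{\mu,n_3}^{(*)}=F_{\mu_\ell,n_3}^{(*)}\ell$, and the fact that $F_{\mu,n_3}T_\ell^*=F_{\mu_\ell,n_3}$, differing only in that you reduce both sides to a common explicit expression while the paper rewrites the terms of $\diag(\hat{\A})^*$ directly into $\diag\big(\fft(\A^*)\big)$. Your displayed formula for $\diag(\hat{\A})^*$ is the correct one (the paper's \eqref{eqdiaghatA} carries a superfluous $*$ on the leftmost Fourier factors, evidently a typo), so no changes are needed.
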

\begin{proof}
 By \eqref{eqcirc1}, \eqref{eqdiagA} and Lemma \ref{lemma21}, we have
    \begin{align}
        \diag(\hat{\A})^*&=\Big( (F_{\mu,n_3}\otimes I_{n_1})\cdot \big( \Circ(\A_{\bf{e}})\cdot (F_{\mu, n_3}^*\otimes I_{n_2})+ \ii \Circ(\A_{\ii})\cdot (F_{\mu_{\ii}, n_3}^*\otimes I_{n_2}) \nonumber \\
	    &\quad +\jj \Circ(\A_{\jj})\cdot (F_{\mu_{\jj}, n_3}^*\otimes I_{n_2})+\kk \Circ(\A_{\kk})\cdot (F_{\mu_{\kk}, n_3}^*\otimes I_{n_2}) \big) \Big)^* \nonumber \\
	    &=(F_{\mu,n_3}\otimes I_{n_2})\cdot  \Circ(\A_{\bf{e}}^*)\cdot (F_{\mu, n_3}^*\otimes I_{n_1})-(F_{\mu_{\ii}, n_3}^*\otimes I_{n_2})\cdot \Circ(\A_{\ii}^*)(F_{\mu_{\ii}, n_3}^*\otimes I_{n_1})\ii \nonumber \\
	    &\quad -(F_{\mu_{\jj}, n_3}^*\otimes I_{n_2})\cdot \Circ(\A_{\jj}^*)(F_{\mu_{\jj}, n_3}^*\otimes I_{n_1})\jj -(F_{\mu_{\kk}, n_3}^*\otimes I_{n_2})\cdot \Circ(\A_{\kk}^*)(F_{\mu_{\kk}, n_3}^*\otimes I_{n_1})\kk. \label{eqdiaghatA}
    \end{align}

    It follows from \eqref{eqdiag} and \eqref{eqfft1} that
    \begin{align*}
        (F_{\mu_{\ii},n_3}\otimes I_{n_2})\cdot \Circ(\A_{\ii}^*)\cdot (F_{\mu_{\ii}, n_3}^*\otimes I_{n_2})\ii &= \diag\Big( \fold\big( \sqrt{n_3}(F_{\mu_{\ii}, n_3}\otimes I_{n_2})\cdot \unfold(\A_{\ii}^*) \big) \Big)\ii \\
        &=\diag\Big( \fold\big( \sqrt{n_3}(F_{\mu, n_3}F_{\mu, n_3}^*F_{\mu_{\ii}, n_3}\otimes I_{n_2})\cdot \unfold(\A_{\ii}^*)\ii \big) \Big)  \\
        &=\diag\bigg( \fft\Big( \fold\big( (F_{\mu, n_3}^*F_{\mu_{\ii}, n_3}\otimes I_{n_2})\cdot \unfold(\A_{\ii}^*)\ii \big) \Big) \bigg).
    \end{align*}
    Similarly,
    \begin{align*}
        (F_{\mu_{\jj},n_3}\otimes I_{n_2})\cdot \Circ(\A_{\jj}^*)\cdot (F_{\mu_{\jj}, n_3}^*\otimes I_{n_2})\jj &=\diag\bigg( \fft\Big( \fold\big( (F_{\mu, n_3}^*F_{\mu_{\jj}, n_3}\otimes I_{n_2})\cdot \unfold(\A_{\jj}^*)\jj \big) \Big) \bigg),  \\
        (F_{\mu_{\kk},n_3}\otimes I_{n_2})\cdot \Circ(\A_{\kk}^*)\cdot (F_{\mu_{\kk}, n_3}^*\otimes I_{n_2})\kk &=\diag\bigg( \fft\Big( \fold\big( (F_{\mu, n_3}^*F_{\mu_{\kk}, n_3}\otimes I_{n_2})\cdot \unfold(\A_{\kk}^*)\kk \big) \Big) \bigg),
    \end{align*}
 which, together with \eqref{eqdiaghatA}, imply that
    \begin{align*}
        \diag(\hat{\A})^*&=\diag\big( \fft(\A^*) \big).
    \end{align*}
    Thus, the proof is completed.
\end{proof}

\begin{corollary}\label{coro32}
    For any two quaternion tensors $\A,\B$ with adequate dimensions, we have
    $$(\A\pro\B)^*=\B^*\pro\A^*.$$
\end{corollary}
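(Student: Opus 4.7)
The plan is to chain together the two main results already established, namely Theorem~\ref{mainthm} (which identifies the gQt-product with ordinary matrix multiplication of the block-diagonalised QDFT tensors) and Theorem~\ref{lemma32} (which identifies $*$ on quaternion tensors with standard conjugate transpose on the diagonalised form). Once both are in hand, the corollary becomes a translation of the familiar matrix identity $(XY)^{*}=Y^{*}X^{*}$ from Lemma~\ref{lemma21}(i) back into the tensor world.

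Concretely, first I would apply Theorem~\ref{lemma32} to the tensor $\A\pro\B$ to obtain
\begin{equation*}
\diag\bigl(\fft((\A\pro\B)^{*})\bigr)=\diag\bigl(\widehat{\A\pro\B}\bigr)^{*}.
\end{equation*}
By Theorem~\ref{mainthm} the right-hand side equals $\bigl(\diag(\hat{\A})\cdot\diag(\hat{\B})\bigr)^{*}$, and block-diagonal quaternion matrices inherit the identity $(XY)^{*}=Y^{*}X^{*}$ from Lemma~\ref{lemma21}(i), so this simplifies to $\diag(\hat{\B})^{*}\cdot\diag(\hat{\A})^{*}$. Invoking Theorem~\ref{lemma32} again on each factor converts the conjugate transposes back into QDFTs of the individual tensor transposes, giving $\diag\bigl(\fft(\B^{*})\bigr)\cdot\diag\bigl(\fft(\A^{*})\bigr)$. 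A final application of Theorem~\ref{mainthm}, now in the direction that reads matrix products as gQt-products, identifies this with $\diag\bigl(\fft(\B^{*}\pro\A^{*})\bigr)$.

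Putting the chain together yields $\diag\bigl(\fft((\A\pro\B)^{*})\bigr)=\diag\bigl(\fft(\B^{*}\pro\A^{*})\bigr)$. Since both the $\diag$ map and $\fft$ are injective (the latter has inverse $\ifft$), I may cancel them to conclude $(\A\pro\B)^{*}=\B^{*}\pro\A^{*}$, which is the claim.

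The one subtle point I would want to double-check is that Lemma~\ref{lemma21}(i), stated for ordinary quaternion matrices, applies to the block-diagonal quaternion matrices $\diag(\hat{\A})$ and $\diag(\hat{\B})$; but these are simply quaternion matrices of the appropriate size, so the lemma applies verbatim. Apart from that, every step is a direct invocation of a preceding result, so the proof is essentially bookkeeping rather than a substantive calculation.
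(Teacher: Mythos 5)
Your proof is correct and follows essentially the same route as the paper's: both reduce the identity to the matrix statement $\big(\diag(\hat{\A})\diag(\hat{\B})\big)^{*}=\diag(\hat{\B})^{*}\diag(\hat{\A})^{*}$ via Theorem~\ref{mainthm} and Theorem~\ref{lemma32}, then invoke Lemma~\ref{lemma21}(i) and the invertibility of the QDFT. Your version merely spells out the chain of substitutions that the paper compresses into two displayed equivalences.
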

\begin{proof}
    Since QDFT is invertible, the equality $(\A\pro\B)^*=\B^*\pro\A^*$ can be written as $$\diag\Big( \fft\big( (\A\pro\B)^* \big) \Big)=\diag\big( \fft( \B^*\pro\A^* ) \big).$$
    By Theorems \ref{mainthm} and \ref{lemma32}, we have
    $$(\A\pro\B)^*=\B^*\pro\A^*~~~\Leftrightarrow\ \big(\diag(\hat{\A})\diag(\hat{\B})\big)^*=\diag(\hat{\B})^*\diag(\hat{\A})^*.$$
    This, together with Lemma \ref{lemma21}, completes the proof.
\end{proof}

We next to introduce the concepts of unitary quaternion tensor and partially unitary quaternion tensor. Some properties of unitary quaternion tensor are given, which are still true for partially unitary quaternion tensor.

\begin{definition}
    The $n\times n\times l$ quaternion tensor $\mathcal{U}$ is unitary if $\mathcal{U}^*\pro\mathcal{U}=\mathcal{U}\pro\mathcal{U}^*=\mathcal{I}_{nnl}$. In addition, $\mathcal{U}\in\Q^{m\times n\times l}$ is said to be partially unitary if $\mathcal{U}^*\pro\mathcal{U}=\mathcal{I}_{nnl}$.
\end{definition}

 By Theorem \ref{lemma32}, we immediately obtain the following result.
\begin{corollary}\label{coro31}
    A quaternion tensor $\mathcal{U}\in\Q^{n\times n\times n_3}$ is unitary  if and only if $\diag(\hat{\mathcal{U}})$ is a unitary matrix.
\end{corollary}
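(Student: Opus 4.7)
The plan is to reduce the tensor unitarity condition to the matrix unitarity condition by pushing everything through the QDFT, using the two heavy-lifting results already proved: Theorem~\ref{mainthm} (gQt-product becomes ordinary matrix product of block-diagonalized QDFTs) and Theorem~\ref{lemma32} (the conjugate transpose of $\mathcal{U}$ corresponds to the conjugate transpose of $\diag(\hat{\mathcal{U}})$). This is essentially a direct translation, so the argument should be short.

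First I would recall from the proof of the group theorem that $\diag(\hat{\mathcal{I}}_{nnn_3}) = I_{nn_3}$, i.e.\ the identity tensor sits on the QDFT side as the ordinary identity matrix. This is the anchor that lets the equation $\mathcal{U}^*\pro\mathcal{U} = \mathcal{I}_{nnn_3}$ be read off as a matrix equation.

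Next I would apply Theorem~\ref{mainthm} with $\mathcal{A}=\mathcal{U}^*$, $\mathcal{B}=\mathcal{U}$, $\mathcal{C}=\mathcal{I}_{nnn_3}$, giving the equivalence
\[
\mathcal{U}^*\pro\mathcal{U}=\mathcal{I}_{nnn_3}\ \Longleftrightarrow\ \diag(\widehat{\mathcal{U}^*})\cdot\diag(\hat{\mathcal{U}})=I_{nn_3}.
\]
Then Theorem~\ref{lemma32} substitutes $\diag(\widehat{\mathcal{U}^*})=\diag(\hat{\mathcal{U}})^*$, so the right-hand side becomes $\diag(\hat{\mathcal{U}})^*\diag(\hat{\mathcal{U}})=I_{nn_3}$. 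The same two-line argument applied to $\mathcal{U}\pro\mathcal{U}^*=\mathcal{I}_{nnn_3}$ yields $\diag(\hat{\mathcal{U}})\diag(\hat{\mathcal{U}})^*=I_{nn_3}$. Combining the two gives exactly the matrix-unitarity of $\diag(\hat{\mathcal{U}})$, and both implications are genuine equivalences because the map $\mathcal{A}\mapsto\diag(\hat{\mathcal{A}})$ is injective (QDFT is invertible).

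There is no real obstacle here; the content of the corollary was already packaged into Theorems~\ref{mainthm} and~\ref{lemma32}. The only thing to be careful about is keeping the $n$'s and $l$'s consistent (here $l=n_3$) and citing the identity $\diag(\hat{\mathcal{I}}_{nnn_3})=I_{nn_3}$ that appears inside a previous proof rather than as a standalone numbered result.
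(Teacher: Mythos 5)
Your argument is correct and is precisely the route the paper intends: the paper simply states that the corollary follows immediately from Theorem~\ref{lemma32}, and your proposal fills in the same chain (Theorem~\ref{mainthm} plus $\diag(\hat{\mathcal{I}}_{nnn_3})=I_{nn_3}$ plus invertibility of the QDFT) that the authors leave implicit. No issues.
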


A nice feature of unitary quaternion tensor is to preserve the Frobenius norm.
\begin{theorem}\label{lemma33}
    Let  $\mathcal{U}$ be a unitary tensor and $\A\in\Q^{n_1\times n_2\times n_3}$ be a quaternion tensor with adequate dimensions. Then,
    $$\| \mathcal{U}\pro\A \|_F=\| \A \|_F.$$
\end{theorem}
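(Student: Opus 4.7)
The plan is to pass through the QDFT to convert the gQt-product into an ordinary matrix product on block-diagonal matrices, exploit the fact that unitary quaternion matrices preserve the Frobenius norm (via the trace identity \eqref{tracerepre}), and then pull the norm equality back to the original quaternion tensors through the Parseval-type identity \eqref{Qfrobeeq1}.

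First I would observe that the block-diagonal arrangement used in $\diag(\cdot)$ merely lines up the frontal slices along a diagonal, so for any third-order quaternion tensor $\B$ one has $\|\diag(\B)\|_F^2 = \sum_k \|B^{(k)}\|_F^2 = \|\B\|_F^2$. Applying this together with \eqref{Qfrobeeq1} gives the identity
\begin{equation*}
\|\mathcal{U}\pro\A\|_F^2 = \tfrac{1}{n_3}\|\widehat{\mathcal{U}\pro\A}\|_F^2 = \tfrac{1}{n_3}\|\diag(\widehat{\mathcal{U}\pro\A})\|_F^2,
\end{equation*}
and similarly $\|\A\|_F^2 = \tfrac{1}{n_3}\|\diag(\hat{\A})\|_F^2$. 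So the claim reduces to $\|\diag(\widehat{\mathcal{U}\pro\A})\|_F = \|\diag(\hat{\A})\|_F$.

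Next I would invoke Theorem \ref{mainthm} to rewrite $\diag(\widehat{\mathcal{U}\pro\A}) = \diag(\hat{\mathcal{U}})\,\diag(\hat{\A})$, and Corollary \ref{coro31} to conclude that $\diag(\hat{\mathcal{U}})$ is a unitary quaternion matrix. A unitary quaternion matrix $U$ preserves the Frobenius norm of any compatible quaternion matrix $M$, since by \eqref{tracerepre} and Lemma \ref{lemma21}(i),
\begin{equation*}
\|UM\|_F^2 = \Tr\bigl((UM)^*(UM)\bigr) = \Tr(M^*U^*UM) = \Tr(M^*M) = \|M\|_F^2.
\end{equation*}
Applying this with $U = \diag(\hat{\mathcal{U}})$ and $M = \diag(\hat{\A})$ yields $\|\diag(\widehat{\mathcal{U}\pro\A})\|_F = \|\diag(\hat{\A})\|_F$, and chaining the identities above completes the proof.

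There is no genuinely hard step here; the main point to handle carefully is justifying that the quaternion trace identity \eqref{tracerepre} transfers to block-diagonal quaternion matrices (which is immediate from its entrywise definition), and checking that when $\mathcal{U}$ is only partially unitary (so $\mathcal{U}^*\pro\mathcal{U} = \mathcal{I}$ but not necessarily the other way), the same argument still goes through because it only uses $U^*U = I$, not $UU^* = I$.
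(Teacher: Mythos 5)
Your proposal is correct and follows essentially the same route as the paper: pass to the QDFT domain via the Parseval identity \eqref{Qfrobeeq1}, use Theorem \ref{mainthm} to turn the gQt-product into a product of block-diagonal matrices, note via Corollary \ref{coro31} that $\diag(\hat{\mathcal{U}})$ is unitary, and conclude with the trace identity \eqref{tracerepre} (the paper writes out the same trace computation, routing it through Theorem \ref{lemma32} and Corollary \ref{coro32} rather than quoting Corollary \ref{coro31} directly). Your closing remark that only $U^*U=I$ is used, so the result extends to partially unitary tensors, is a correct and slightly stronger observation than the paper makes.
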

\begin{proof}
    Since normalized QDFT will not change the Frobenius norm, we have
    \begin{equation}\label{QFrobeniuseq}
        \|\mathcal{U}\pro\A\|_{F}^{2}=\sum_{p,q}\| (\mathcal{U}\pro\A)(p,q,:) \|_{F}^{2}=\frac{1}{n_3}\sum_{p,q}\| \fft(\mathcal{U}\pro\A)(p,q,:) \|_{F}^{2}=\frac{1}{n_3}\|\diag\big(\fft(\mathcal{U}\pro\A)\big) \|_{F}^{2} .
    \end{equation}
    By Theorems \ref{mainthm} and \ref{lemma32}, Corollary \ref{coro32}, and \eqref{tracerepre}, we can obtain
    \begin{align*}
        \|\mathcal{U}\pro\A\|_{F}^{2}&=\frac{1}{n_3}\Tr\Big(\diag\big(\fft(\mathcal{U}\pro\A))^*\cdot \diag(\fft(\mathcal{U}\pro\A)\big)\Big) \\
        &=\frac{1}{n_3}\Tr\Big(\diag\big(\fft((\mathcal{U}\pro\A)^*)\big)\cdot \diag\big(\fft(\mathcal{U}\pro\A)\big)\Big) \\
        &=\frac{1}{n_3}\Tr\Big( \big(\diag(\hat{\A})\big)^*\big(\diag(\hat{\mathcal{U}})\big)^* \diag(\hat{\mathcal{U}}) \diag(\hat{\A}) \Big) \\
        &=\frac{1}{n_3}\| \diag(\hat{\A}) \|_F^2=\| \A \|_F^2.
    \end{align*}
    Thus, we complete the proof.
\end{proof}

We say a tensor is ``f-diagonal" if each frontal slice is diagonal, and then the following decomposition of quaternion tensor is proposed.
\begin{theorem}[{\bf gQt-SVD}]\label{thmqtsvd}
Any third-order quaternion tensor $\A\in\Q^{n_1\times n_2\times n_3}$  can be factorized as
    \begin{equation*}
        \A=\mathcal{U}\pro\mathcal{S}\pro\mathcal{V}^*,
    \end{equation*}
    where $\mathcal{S}\in\Q^{n_1\times n_2\times n_3}$ is an f-diagonal tensor, $\mathcal{U}\in\Q^{n_1\times n_1\times n_3}$ and $\mathcal{V}\in\Q^{n_2\times n_2\times n_3}$ are unitary. Moreover, $\|\A\|_F=\|\mathcal{S}\|_F$.
\end{theorem}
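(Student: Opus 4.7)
The plan is to exploit Theorem \ref{mainthm}, which converts the gQt-product into ordinary matrix multiplication of the block-diagonal matrices $\diag(\hat{\cdot})$ of the QDFT-transformed tensors, and then reduce the factorization problem to a fibre of ordinary QSVDs.

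First I would apply the QDFT to $\A$ and form $\diag(\hat{\A})\in\Q^{n_1 n_3\times n_2 n_3}$. This is block diagonal with blocks $\hat{A}^{(k)}\in\Q^{n_1\times n_2}$ for $k\in[n_3]$. Theorem \ref{thm21} (QSVD) then provides unitary $U^{(k)}\in\Q^{n_1\times n_1}$, $V^{(k)}\in\Q^{n_2\times n_2}$, and an $n_1\times n_2$ real nonnegative diagonal matrix $S^{(k)}$ such that $\hat{A}^{(k)}=U^{(k)}S^{(k)}(V^{(k)})^*$ for every $k$. I assemble the frontal slices $U^{(k)}, S^{(k)}, V^{(k)}$ into tensors $\hat{\mathcal{U}}, \hat{\mathcal{S}}, \hat{\mathcal{V}}$, and then set $\mathcal{U}=\ifft(\hat{\mathcal{U}})$, $\mathcal{S}=\ifft(\hat{\mathcal{S}})$, $\mathcal{V}=\ifft(\hat{\mathcal{V}})$. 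By construction
\begin{equation*}
\diag(\hat{\A})=\diag(\hat{\mathcal{U}})\cdot\diag(\hat{\mathcal{S}})\cdot\diag(\hat{\mathcal{V}})^{*},
\end{equation*}
and by Theorem \ref{lemma32} the right-hand factor $\diag(\hat{\mathcal{V}})^{*}$ equals $\diag(\fft(\mathcal{V}^{*}))$. Two applications of Theorem \ref{mainthm} then yield $\A=\mathcal{U}\pro\mathcal{S}\pro\mathcal{V}^{*}$.

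Next I would verify the three structural claims. Unitarity of $\mathcal{U}$ and $\mathcal{V}$ follows from Corollary \ref{coro31}: since $\diag(\hat{\mathcal{U}})$ and $\diag(\hat{\mathcal{V}})$ are block-diagonal with unitary quaternion blocks, they are unitary matrices, hence $\mathcal{U}$ and $\mathcal{V}$ are unitary quaternion tensors. For the f-diagonality of $\mathcal{S}$, the key observation is that QDFT acts tube-wise on the third index: because each $S^{(k)}$ is diagonal, every off-diagonal tube $\hat{\mathcal{S}}(i,j,:)$ with $i\neq j$ is identically zero, and linearity of $\ifft$ preserves this, so $\mathcal{S}(i,j,:)\equiv \bf{0}$ for $i\neq j$, i.e.\ $\mathcal{S}$ is f-diagonal.

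Finally, for the norm identity, Theorem \ref{lemma33} (which gives $\|\mathcal{U}\pro\A\|_F=\|\A\|_F$ for unitary $\mathcal{U}$) handles the left factor at once; for the right factor I would use Corollary \ref{coro32} together with the fact that conjugate transposition preserves the Frobenius norm (since it merely reorders slices and conjugates entries), giving
\begin{equation*}
\|\mathcal{S}\pro\mathcal{V}^{*}\|_F=\|(\mathcal{S}\pro\mathcal{V}^{*})^{*}\|_F=\|\mathcal{V}\pro\mathcal{S}^{*}\|_F=\|\mathcal{S}^{*}\|_F=\|\mathcal{S}\|_F,
\end{equation*}
so $\|\A\|_F=\|\mathcal{S}\|_F$. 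The only subtle point, and the one most likely to trip up a careful reader, is the passage from a block-diagonal matrix identity back to a tensor identity involving $\mathcal{V}^{*}$: one must cite Theorem \ref{lemma32} to identify $\diag(\hat{\mathcal{V}})^{*}$ with $\diag(\fft(\mathcal{V}^{*}))$, because in the gQt framework conjugate transposition of a tensor is \emph{not} the naive frontal-slice-wise operation but involves the twist matrices $T_{\ii}, T_{\jj}, T_{\kk}$; all other steps are then essentially routine.
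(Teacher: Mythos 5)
Your proposal is correct and follows essentially the same route as the paper: slice-wise QSVD of $\diag(\hat{\A})$ in the QDFT domain, reassembly via the inverse transform, and then Theorem \ref{mainthm}, Corollary \ref{coro31} and Theorem \ref{lemma33} to pull the factorization and the norm identity back to the tensor level. Your write-up is in fact somewhat more careful than the paper's on two points the paper leaves implicit --- the identification $\diag(\hat{\mathcal{V}})^*=\diag(\fft(\mathcal{V}^*))$ via Theorem \ref{lemma32}, and the right-factor half of $\|\A\|_F=\|\mathcal{S}\|_F$ (just note that norm-invariance of the gQt conjugate transpose is itself best justified through Theorem \ref{lemma32} rather than by ``reordering slices,'' since the operation involves the twist matrices).
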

\begin{proof}
    With the quaternion matrix SVD in Theorem \ref{thm21}, we have
    \begin{equation*}
        \diag(\hat{\A})= \diag(\hat{\mathcal{U}})\diag(\hat{\mathcal{S}})\diag(\hat{\mathcal{V}})^*,
    \end{equation*}
    where $\diag(\hat{\mathcal{U}})$ and $\diag(\hat{\mathcal{V}})^*$ are unitary, and $\diag(\hat{\mathcal{S}})$ is diagonal.

    Let $\mathcal{U},\mathcal{S},\mathcal{V}$ be the quaternion tensors corresponding to $\diag(\hat{\mathcal{U}}),\ \diag(\hat{\mathcal{S}}),\ \diag(\hat{\mathcal{V}})$, respectively. That is, $$\mathcal{U}\doteq \fold\big( (F_{\mu,n_3}^*\otimes I_{n_1})\frac{1}{\sqrt{n_3}}\diag(\hat{\mathcal{U}})(e\otimes I_{n_1})\big),$$
    $$\mathcal{V}\doteq \fold\big( (F_{\mu,n_3}^*\otimes I_{n_2})\frac{1}{\sqrt{n_3}}\diag(\hat{\mathcal{V}})(e\otimes I_{n_2})\big),$$
    $$\mathcal{S}\doteq \fold\big( (F_{\mu,n_3}^*\otimes I_{n_1})\frac{1}{\sqrt{n_3}}\diag(\hat{\mathcal{S}})(e\otimes I_{n_2})\big),$$
    where $e$ is an $n_3$-dimensional column vector whose all elements are $1$.
    Clearly, $\mathcal{U},\mathcal{V}$ are unitary, and $\mathcal{S}$ is an f-diagonal tensor. It follows from Theorem \ref{mainthm} and Corollary \ref{coro31} that  $\A=\mathcal{U}\pro\mathcal{S}\pro\mathcal{V}^*$. By Theorem \ref{lemma33}, we immediately obtain $\|\A\|_F=\|\mathcal{S}\|_F$.
\end{proof}

By Theorem \ref{thmqtsvd}, any third-order quaternion tensor has gQt-SVD. So, in the next subsection we will define its rank based on such decomposition, and show the existence of  low-rank optimal approximation.

\subsection{gQt-rank, low-rank optimal approximation, and multi-gQt-rank}
\noindent
\par
For any third-order quaternion tensor $\A\in\Q^{n_1\times n_2\times n_3}$ and its gQt-SVD $\A=\mathcal{U}\pro\mathcal{S}\pro\mathcal{V}^*$, we regard $\mathcal{U}(:,i,:)\in \Q^{n_1\times 1\times n_3}$,  $\mathcal{V}(:,i,:)\in \Q^{n_2\times 1\times n_3}$, and $\mathcal{S}(i,i,:)\in \Q^{1\times 1\times n_3}$ as tensors. By QDFT \eqref{qdftu}, we have
$$\fft(\mathcal{U}(:,i,:))=\hat{\mathcal{U}}(:,i,:),\quad  \fft(\mathcal{S}(i,i,:))=\hat{\mathcal{S}}(i,i,:),\quad \fft(\mathcal{V}(:,i,:))=\hat{\mathcal{V}}(:,i,:).$$
It follows from Theorem \ref{thm21} that
\begin{equation*}
    \diag(\hat{\A})=\sum_{i=1}^{\min(n_1,n_2)}\diag\big(\fft(\mathcal{U}(:,i,:))\big)\diag\big(\fft(\mathcal{S}(i,i,:))\big)\diag\big(\fft(\mathcal{V}(:,i,:))\big)^*.
\end{equation*}
With simple calculation, we can get
\begin{equation}\label{approach}
    \A=\sum_{i=1}^m\mathcal{U}(:,i,:)\pro\mathcal{S}(i,i,:)\pro\mathcal{V}(:,i,:)^*,\quad m\doteq\min(n_1,n_2).
\end{equation}
Thus, $\A$ can be written as a finite sum of gQt-product of matrices. Naturally, according to \eqref{approach}, we can define the ``gQt-rank" for any third-order quaternion  tensor.

\begin{definition}[{\bf gQt-rank}]\label{defrank}
    Let $\A\in\Q^{n_1\times n_2\times n_3}$ and its gQt-SVD be given as $\A=\mathcal{U}\pro\mathcal{S}\pro\mathcal{V}^{*}$.  The number of nonzero elements of $\{\mathcal{S}(i,i,:)\}_{i=1}^{m}$ is called gQt-rank of $\A$, denote as $\rank_{gQt}(\A)$. That is,
    \begin{equation*}
        \rank_{gQt}(\A)\doteq\#\{i|\ \mathcal{S}(i,i,:)\neq 0\}=\#\{i|\ \hat{\mathcal{S}}(i,i,:)\neq 0\}.
    \end{equation*}
The $i$-th  singular value of $\A$ is defined as $$\sigma_i(\A)\doteq\frac{1}{n_3}\|\hat{S}(i,i,:)\|_1, \quad i\in[m],$$ and the nuclear norm of $\A$ is defined as $$\|\A\|_*\doteq \sum_{i=1}^m\sigma_i(\A).$$
\end{definition}

Similar to Lemma \ref{lemman22}, we have the following result for the nuclear norm $\|\A\|_*$.
\begin{lemma}\label{lemma34}
    Let $\A\in\Q^{n_1\times n_2\times n_3}$, then
    $$\|\A\|_*=\frac{1}{n_3}\sum_{i=1}^{n_3}\|\hat{\A}(:,:,i)\|_*=\min_{\X,\Y}\left\{\frac{1}{2}(\|\X\|_F^2+\|\Y\|_F^2)~:~~\A=\X\pro\Y^*\right\}.$$
\end{lemma}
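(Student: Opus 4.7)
The plan is to prove the two equalities separately, each by exploiting the block-diagonalization induced by the QDFT to reduce everything to matrix-level facts already available in the text. For the identity $\|\A\|_{*}=\tfrac{1}{n_3}\sum_{k=1}^{n_3}\|\hat{\A}(:,:,k)\|_{*}$, I would apply the gQt-SVD $\A=\mathcal{U}\pro\mathcal{S}\pro\mathcal{V}^{*}$ from Theorem~\ref{thmqtsvd} together with Corollary~\ref{coro31} to obtain $\hat{\A}(:,:,k)=\hat{\mathcal{U}}(:,:,k)\,\hat{\mathcal{S}}(:,:,k)\,\hat{\mathcal{V}}(:,:,k)^{*}$ for every $k$, where $\hat{\mathcal{U}}(:,:,k)$ and $\hat{\mathcal{V}}(:,:,k)$ are unitary quaternion matrices and $\hat{\mathcal{S}}(:,:,k)$ is a nonnegative real diagonal matrix (the nonnegativity being inherited slice by slice from the QSVDs that assemble the gQt-SVD). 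Consequently the singular values of $\hat{\A}(:,:,k)$ are exactly the diagonal entries $\hat{\mathcal{S}}(i,i,k)$, so $\|\hat{\A}(:,:,k)\|_{*}=\sum_{i}\hat{\mathcal{S}}(i,i,k)$; swapping the order of summation gives $\tfrac{1}{n_3}\sum_{k}\|\hat{\A}(:,:,k)\|_{*}=\tfrac{1}{n_3}\sum_{i=1}^{m}\|\hat{\mathcal{S}}(i,i,:)\|_{1}=\sum_{i=1}^{m}\sigma_{i}(\A)=\|\A\|_{*}$ by Definition~\ref{defrank}.

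For the variational formula I would handle the two inequalities separately, each resting on a frontal-slice-wise application of Lemma~\ref{lemman22} combined with the Parseval-type identity \eqref{Qfrobeeq1}, which yields $\|\mathcal{Z}\|_F^{2}=\tfrac{1}{n_3}\sum_{k}\|\hat{\mathcal{Z}}(:,:,k)\|_F^{2}$ for any third-order quaternion tensor $\mathcal{Z}$. For the direction ``$\leq$'': given any admissible factorization $\A=\X\pro\Y^{*}$, Theorem~\ref{mainthm} together with Theorem~\ref{lemma32} and Corollary~\ref{coro32} force $\hat{\A}(:,:,k)=\hat{\X}(:,:,k)\,\hat{\Y}(:,:,k)^{*}$ slice by slice; applying Lemma~\ref{lemman22} to each slice gives $\|\hat{\A}(:,:,k)\|_{*}\leq \tfrac{1}{2}\bigl(\|\hat{\X}(:,:,k)\|_F^{2}+\|\hat{\Y}(:,:,k)\|_F^{2}\bigr)$, and summing over $k$, dividing by $n_3$, and invoking the first equality already proved produces $\|\A\|_{*}\leq \tfrac{1}{2}(\|\X\|_F^{2}+\|\Y\|_F^{2})$.

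For the reverse inequality I would exhibit an explicit minimizer built from the gQt-SVD: in the QDFT domain, set $\hat{\X}(:,:,k)\doteq \hat{\mathcal{U}}(:,:,k)\,\hat{\mathcal{S}}(:,:,k)^{1/2}$ and $\hat{\Y}(:,:,k)\doteq \hat{\mathcal{V}}(:,:,k)\,\hat{\mathcal{S}}(:,:,k)^{1/2}$ (the square root is well-defined because $\hat{\mathcal{S}}(:,:,k)$ is nonnegative diagonal), and then take $\X=\ifft(\hat{\X})$, $\Y=\ifft(\hat{\Y})$. By Theorem~\ref{mainthm} these satisfy $\A=\X\pro\Y^{*}$, and the unitarity of $\hat{\mathcal{U}}(:,:,k),\hat{\mathcal{V}}(:,:,k)$ collapses the Frobenius bookkeeping to $\tfrac{1}{2}(\|\X\|_F^{2}+\|\Y\|_F^{2})=\tfrac{1}{n_3}\sum_{k}\sum_{i}\hat{\mathcal{S}}(i,i,k)=\|\A\|_{*}$, closing the chain. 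The main obstacle I anticipate is bookkeeping rather than conceptual: carefully tracking the $1/n_3$ normalization between the tensor Frobenius/nuclear norms and their per-slice QDFT counterparts, and verifying that the nonnegativity of the diagonal of $\hat{\mathcal{S}}(:,:,k)$ (needed to extract the square root) is genuinely inherited from the construction of gQt-SVD rather than merely assumed.
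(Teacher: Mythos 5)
Your proposal is correct and follows essentially the same route as the paper's proof: both reduce the tensor statement to the frontal slices of the QDFT tensor, apply the quaternion-matrix factorization characterization of the nuclear norm (Lemma~\ref{lemman22}) slice by slice, and convert between tensor and slice norms via the Parseval identity \eqref{Qfrobeeq1} together with the slice-wise factorization correspondence from Theorems~\ref{mainthm} and~\ref{lemma32}. The only difference is presentational: you unpack the minimum into two inequalities and exhibit the minimizer $\hat{\X}(:,:,k)=\hat{\mathcal{U}}(:,:,k)\hat{\mathcal{S}}(:,:,k)^{1/2}$, $\hat{\Y}(:,:,k)=\hat{\mathcal{V}}(:,:,k)\hat{\mathcal{S}}(:,:,k)^{1/2}$ explicitly (and your ordering of the factors is the correct one for the noncommutative quaternion setting), whereas the paper absorbs this into a single chain of equalities relying on Lemma~\ref{lemman22}.
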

\begin{proof}
    From Definition \ref{defrank}, we have
    \begin{align*}
        \|\A\|_*&=\sum_{j=}^{m}\sum_{i=}^{n_3}|\hat{\mathcal{S}}(j,j,i)|=\frac{1}{n_3}\sum_{i=}^{n_3}\|\hat{\A}(:,:,i)\|_* \\ &=\min_{\hat{\X},\hat{\Y}}\left\{\frac{1}{2n_3}\sum_{i=1}^{n_3}(\|\hat{\X}(:,:,i)\|_F^2+\|\hat{\Y}(:,:,i)\|_F^2)~:~\hat{\X}(:,:,i)(\hat{\Y}(:,:,i))^*=\hat{\A}(:,:,i), i\in[n_3]\right\} \\
        &=\min_{\X,\Y}\left\{\frac{1}{2}(\|\X\|_F^2+\|\Y\|_F^2)~:~~\A=\X\pro\Y^*\right\}.
    \end{align*}
\end{proof}

By \eqref{approach}, the following theorem shows the existence of  low-rank ($\rank_{gQt}(\A)=k<m$) optimal approximation.
\begin{theorem}
    Let $\A\in\Q^{n_1\times n_2\times n_3}$ and its gQt-SVD be given as $\A=\mathcal{U}\pro\mathcal{S}\pro\mathcal{V}^{*}$. For $k<m$, denote
    \begin{equation}\label{thm3}
        \A_k=\sum_{i=1}^{k}\mathcal{U}(:,i,:)\pro\mathcal{S}(i,i,:)\pro\mathcal{V}(:,i,:)^*.
    \end{equation}
    Then,
     $$\A_k\in \arg\min_{{\mathcal{C}}\in M_{k}}\| \A-{\mathcal{C}} \|_{F},\quad \text{where $M_{k}=\{ \mathcal{C}|\ \mathcal{X}\in \Q^{n_1\times k\times n_3}, \mathcal{Y}\in\Q^{k\times n_2\times n_3}, ~\mathcal{C}=\mathcal{X}\pro\mathcal{Y} \}$.}$$
\end{theorem}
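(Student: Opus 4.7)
The plan is to pass to the QDFT (Fourier) domain via Theorem \ref{mainthm} and the norm-preservation identity \eqref{Qfrobeeq1}, reduce the problem to a family of independent best low-rank approximation problems for quaternion matrices, and then invoke the quaternion Eckart--Young property that follows from QSVD (Theorem \ref{thm21}).

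First I would use \eqref{Qfrobeeq1} and the definition of the QDFT to rewrite the objective as
\begin{equation*}
\|\A-\mathcal{C}\|_F^2 \;=\; \tfrac{1}{n_3}\,\|\hat{\A}-\hat{\mathcal{C}}\|_F^2
\;=\; \tfrac{1}{n_3}\sum_{j=1}^{n_3}\|\hat{\A}(:,:,j)-\hat{\mathcal{C}}(:,:,j)\|_F^2 .
\end{equation*}
Next, for any feasible $\mathcal{C}=\mathcal{X}\pro\mathcal{Y}$ with $\mathcal{X}\in\Q^{n_1\times k\times n_3}$ and $\mathcal{Y}\in\Q^{k\times n_2\times n_3}$, Theorem \ref{mainthm} gives $\diag(\hat{\mathcal{C}})=\diag(\hat{\mathcal{X}})\diag(\hat{\mathcal{Y}})$, hence
\begin{equation*}
\hat{\mathcal{C}}(:,:,j)=\hat{\mathcal{X}}(:,:,j)\,\hat{\mathcal{Y}}(:,:,j),\qquad j\in[n_3],
\end{equation*}
so each frontal slice $\hat{\mathcal{C}}(:,:,j)$ is a quaternion matrix of rank at most $k$. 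The minimization therefore splits into $n_3$ independent problems, each of the form ``best rank-$k$ quaternion matrix approximation of $\hat{\A}(:,:,j)$ in Frobenius norm.''

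The key ingredient is the quaternion Eckart--Young inequality: for any quaternion matrix $B$ of rank at most $k$,
\begin{equation*}
\|\hat{\A}(:,:,j)-B\|_F^2 \;\geq\; \sum_{i=k+1}^{m}\sigma_i\!\bigl(\hat{\A}(:,:,j)\bigr)^2 ,
\end{equation*}
which is obtained by writing out the QSVD of $\hat{\A}(:,:,j)$ (Theorem \ref{thm21}), using unitary invariance of $\|\cdot\|_F$ in $\Q$, and then the scalar fact that removing off-diagonal or low-singular-value components only increases the residual; this mimics the classical real/complex proof and uses only that QSVD exists and that unitary quaternion matrices preserve $\|\cdot\|_F$. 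Summing the inequality over $j$ and dividing by $n_3$ yields a universal lower bound on $\|\A-\mathcal{C}\|_F^2$.

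Finally I would verify that $\A_k$ attains this bound and is itself feasible. Because gQt-SVD is built by performing QSVD blockwise on $\diag(\hat{\A})$, the tube $\mathcal{S}(i,i,:)$ satisfies $\hat{\mathcal{S}}(i,i,j)=\sigma_i(\hat{\A}(:,:,j))$ after an appropriate ordering, so truncating \eqref{thm3} to the first $k$ indices gives exactly the rank-$k$ QSVD truncation of every slice $\hat{\A}(:,:,j)$, which achieves equality. Feasibility in $M_k$ is obtained by writing $\A_k=\mathcal{X}\pro\mathcal{Y}$ with $\mathcal{X}=\mathcal{U}(:,1{:}k,:)\pro\mathcal{S}(1{:}k,1{:}k,:)\in\Q^{n_1\times k\times n_3}$ and $\mathcal{Y}=\mathcal{V}(:,1{:}k,:)^{*}\in\Q^{k\times n_2\times n_3}$. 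The main technical obstacle I expect is the quaternion Eckart--Young step, since the noncommutativity of $\Q$ forces a careful unitary-invariance argument rather than a direct reuse of the complex proof; everything else is bookkeeping enabled by Theorem \ref{mainthm}, Corollary \ref{coro31} and Theorem \ref{lemma33}.
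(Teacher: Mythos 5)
Your proposal is correct and follows essentially the same route as the paper: pass to the QDFT domain via Theorem \ref{mainthm} and the norm identity, reduce to $n_3$ independent best rank-$k$ quaternion matrix approximation problems on the frontal slices of $\hat{\A}$, and invoke the quaternion Eckart--Young property from Theorem \ref{thm21}. Your sketch is in fact slightly more careful than the paper's, which simply cites the existence of the rank-$k$ optimal quaternion matrix approximation and does not spell out the feasibility of $\A_k$ in $M_k$.
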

\begin{proof}
    For all $\mathcal{C}\in M_{k}$, it is easy to see that the rank of each block matrix of $\diag(\hat{\mathcal{C}})$ is not greater than $k$. Since there exists rank-$k$ optimal approximation of quaternion matrix,  it follows that
    \begin{align*}
        \|\A-\mathcal{C}\|_{F}^{2}&=\frac{1}{n_3}\|\diag(\hat{\A})-\diag(\hat{\mathcal{C}}) \|_{F}^{2}\\
        &=\frac{1}{n_3}\sum_{q=1}^{n_3}\| \hat{\A}(:,:,q)-\hat{\mathcal{C}}(:,:,q) \|_{F}^{2}  \\
        &\geq \frac{1}{n_3}\sum_{q=1}^{n_3}\|\hat{\A}(:,:,q)- \sum_{p=1}^{k}\hat{\mathcal{U}}(:,p,q)\hat{\mathcal{S}}(p,p,q)\hat{\mathcal{V}}(:,p,q)^* \|_{F}^{2} \\
        &=\frac{1}{n_3}\sum_{q=1}^{n_3}\|\hat{\A}(:,:,q)- \hat{\A_{k}}(:,:,q) \|_{F}^{2} =\| \A-\A_{k} \|_{F}^{2},
    \end{align*}
    where the first equality is from \eqref{QFrobeniuseq} and the last two equalities hold due to \eqref{thm3} and Theorem \ref{mainthm}. Thus, this completes the proof.
\end{proof}

We now  investigate gQt-rank of third order quaternion tensor generated by color video, and show that the low gQt-rankness is actually an inherent property of many color videos. We select a video data ``News" in widely used YUV Video Sequences\footnote{\url{http://trace.eas.asu.edu/yuv/}}. We take all 300 frames of size 288 $\times$ 352  as a color video data $\tilde{\C}$, i.e., $\tilde{\C}\in\Q^{288\times 352\times 300}$, where we encode the red, green, and blue channel pixel values on the three imaginary parts of a quaternion. Figure \ref{fig:singularshow1} illustrates the low-rank structure of $\tilde{\C}$ from color videos.

\begin{figure}[H]
	\centering
	\subfigure[{\small Sampled frames in video}]{
		\begin{minipage}[t]{0.4\linewidth}
			\centering
			\includegraphics[width=1\linewidth]{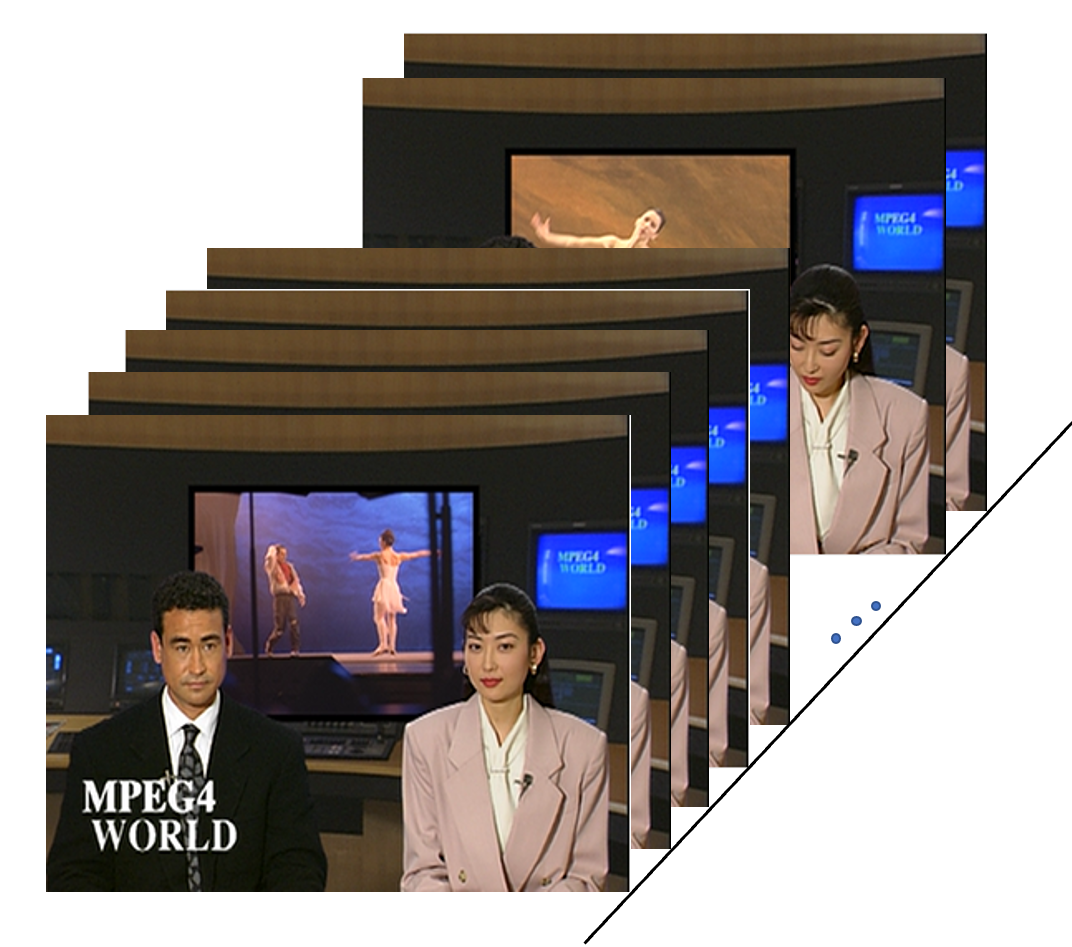}
		\end{minipage}
	}
	\subfigure[{\small Singular values of $\tilde{\C}$}]{
		\begin{minipage}[t]{0.5\linewidth}
			\centering
			\includegraphics[width=1\linewidth]{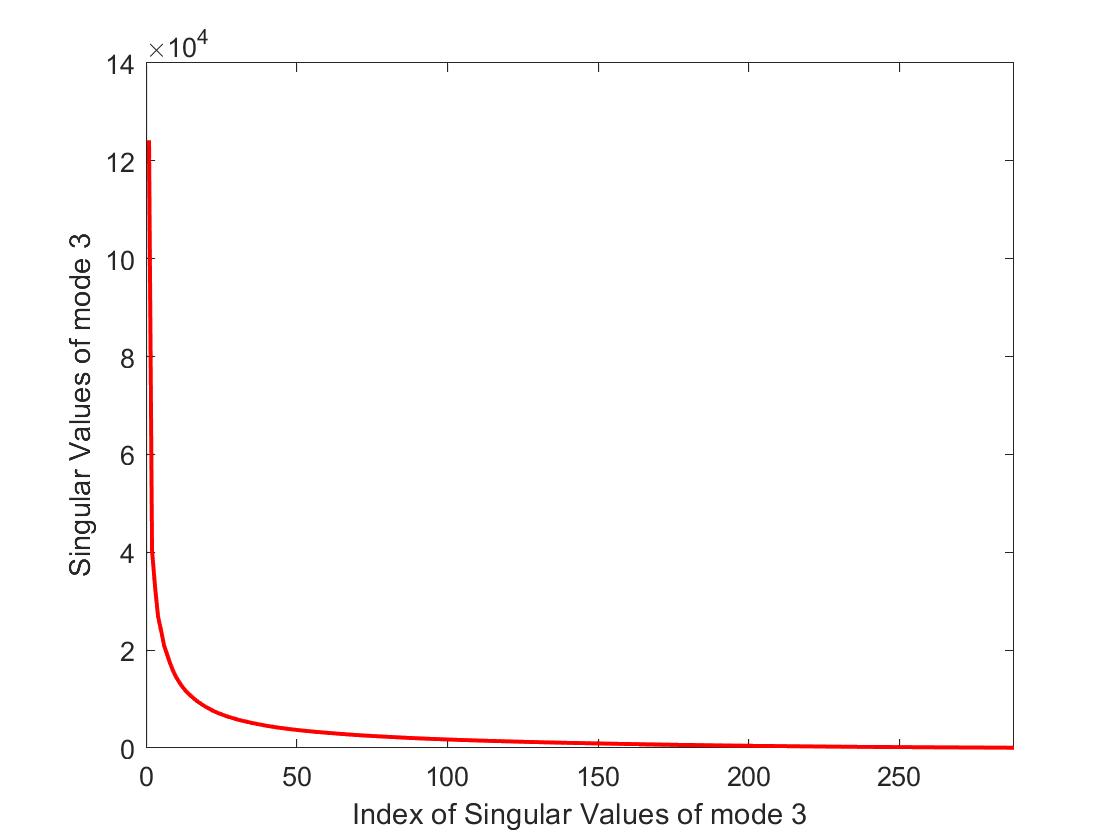}
		\end{minipage}
	}
\vspace{-0.1cm}
    \caption{{\small The sampled frames in video and singular values.}}
    \label{fig:singularshow1}
\end{figure}

Note that gQt-rank is only defined on one mode-$3$ of third order quaternion tensor, and the low-rank structure on the other two modes is missing. Motivated by this and {\it multi-tubal rank} given \cite{multirank}, we next to introduce ``multi-gQt-rank" for third-order quaternion tensor $\A\in\Q^{n_1\times n_2\times n_3}$ by extending QDFT \eqref{qdftu} from mode-$3$ to the other two modes.

To begin with some notations, we denote
\begin{equation*}
    A_{1}^{(i)}=\A(i,:,:),\quad  A_{2}^{(j)}=\A(:,j,:),\quad A_{3}^{(k)}=\A(:,:,k),\quad i\in [n_1],\ j\in [n_2],\ k\in [n_3].
\end{equation*}
Define QDFT of $\A$ along $w$-th mode ($w=1,2,3$) as $\hat{\A}_{1}$, $\hat{\A}_{2}$ and $\hat{\A}_{3}$,  which satisfy
\begin{align*}
    \hat{\A}_{1}(:,j,k)=F_{\mu,n_1}\A(:,j,k),\quad \hat{\A}_{2}(i,:,k)=F_{\mu,n_2}\A(i,:,k),\quad \hat{\A}_{3}(i,j,:)=F_{\mu,n_3}\A(i,j,:),
\end{align*}
for $i\in [n_1],\ j\in [n_2],\ k\in [n_3]$. Here, $F_{\mu,n_w}$ is defined similarly to \eqref{qdftu}. For simplicity, we define
\begin{align*}
    \hat{A}_{1}^{(i)}\doteq\hat{\A}_{1}(i,:,:),\quad \hat{A}_{2}^{(j)}\doteq\hat{\A}_{2}(:,j,:),\quad \hat{A}_{3}^{(k)}\doteq\hat{\A}_{3}(:,:,k),\quad i\in [n_1],\ j\in [n_2],\ k\in [n_3].
\end{align*}
We also define the following operators for $w=1,2,3$,
\begin{equation*}
    \diag_{w}(\A)\doteq\begin{bmatrix}
    A_{w}^{(1)} &  &  &  \\
		 & A_{w}^{(2)} &  &  \\
		 &  & \ddots &  \\
		 &  &  & A_{w}^{(n_u)}
    \end{bmatrix}, \quad
	\Circ_{w}(\mathcal{A})\doteq\begin{bmatrix}
		A_{w}^{(1)} & A_{w}^{(n_3)} & \cdots & A_{w}^{(2)} \\
		A_{w}^{(2)} & A_{w}^{(1)} & \cdots & A_{w}^{(3)} \\
		\vdots & \vdots & \ddots & \vdots \\
		A_{w}^{(n_3)} & A_{w}^{(n_3-1)} & \cdots & A_{w}^{(1)}
	\end{bmatrix},\end{equation*}
and \begin{align*}
    \unfold_{w}(\A) \doteq [A_{w}^{(1)}; A_{w}^{(2)}; \dots; A_{w}^{(n_w)}].
\end{align*}
The inverse operator ``$\fold_{w}$'' is defined by $\fold_{w}(\unfold_{w}(\A))=\A.$ With $\mu_{\ii},\mu_{\jj},\mu_{\kk}$ defined in \eqref{kernel2}, we set
$$T_{\ii,w}=F_{\mu_{\ii},n_w}^*F_{\mu,n_w},\quad   T_{\jj,w}=F_{\mu_{\jj},n_w}^*F_{\mu, n_w},\quad  T_{\kk,w}=F_{\mu_{\kk},n_w}^*F_{\mu, n_w}.$$

We now generalize gQt-product along three modes. For $\A\in\Q^{n_1\times n_2\times r}$ and $\B\in\Q^{n_1\times r\times n_3}$, define
	\begin{equation*}
		\A\proone\B\doteq \fold\big( (\Circ_{1}(\A_{\bf{e}})+\ii \Circ_{1}(\A_{\ii})\cdot (T_{\ii,1}\otimes I_{r})+\jj \Circ_{1}(\A_{\jj})\cdot (T_{\jj,1}\otimes I_{r})+\kk \Circ_{1}(\A_{\kk})\cdot (T_{\kk,1}\otimes I_{r}) )\cdot \unfold_{1}(\B)\big).
	\end{equation*}
For $\A\in\Q^{n_1\times n_2\times r}$ and $\B\in\Q^{r\times n_2\times n_3}$, define
	\begin{equation*}
		\A\protwo\B\doteq \fold\big( (\Circ_{2}(\A_{\bf{e}})+\ii \Circ_{2}(\A_{\ii})\cdot (T_{\ii,2}\otimes I_{r})+\jj \Circ_{2}(\A_{\jj})\cdot (T_{\jj,2}\otimes I_{r})+\kk \Circ_{2}(\A_{\kk})\cdot (T_{\kk,2}\otimes I_{r}) )\cdot \unfold_{2}(\B)\big).
	\end{equation*}
For $\A\in\Q^{n_1\times r\times n_3}$ and $\B\in\Q^{r\times n_2\times n_3}$, define
	\begin{equation*}
		\A\prothree\B\doteq \fold\big( (\Circ_{3}(\A_{\bf{e}})+\ii \Circ_{3}(\A_{\ii})\cdot (T_{\ii,3}\otimes I_{r})+\jj \Circ_{3}(\A_{\jj})\cdot (T_{\jj,3}\otimes I_{r})+\kk \Circ_{3}(\A_{\kk})\cdot (T_{\kk,3}\otimes I_{r}) )\cdot \unfold_{3}(\B)\big).
	\end{equation*}

We introduce the concept of ``multi-gQt-rank" for third-order quaternion tensor as follows.
\begin{definition}[{\bf multi-gQt-rank}]
    Let $\A\in\Q^{n_1\times n_2\times n_3}$ and $r_w^l=\rank(\hat{A}_w^{(l)})$ with $l\in[n_w]$ and $w\in[3]$. The multi-gQt-rank of $\A$ is defined as
    $$\rank_{mgQt}(\A)=(r_1(\A),r_2(\A),r_3(\A)),$$
    where $r_w(\A)=\max(r_w^1,r_w^2,\dots,r_w^{n_w})$.
\end{definition}
By Theorem \ref{mainthm} and Lemma \ref{lemma34}, we can easily get the following results.
\begin{theorem}\label{lemma51}
    Let $\A,\B$ be quaternion tensors and $\C=\A*_{\mu}^{w}\B$ be defined above for $w\in[3]$. If their QDFT tensors along mode-$w$ are $\hat{\A}_{w},\hat{\B}_{w}$, and $\hat{\mathcal{C}}_{w}$, respectively, then, it holds\\
    $\emph{(i)}$ $\|\A\|_F^2=\frac{1}{n_w}\|\hat{\A}_w\|_F^2$.\\
    $\emph{(ii)}$ $\diag_{w}(\hat{\mathcal{C}}_{w})=\diag_{w}(\hat{\A}_{w})\cdot \diag_{w}(\hat{\B}_{w}).$\\
    $\emph{(iii)}$  $r_w(\C)\leq \min( r_w(\A),r_w(\B) )$.
\end{theorem}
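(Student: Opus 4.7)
My plan is to observe that Theorem \ref{lemma51} is the mode-$w$ analogue of results already established for mode-$3$, namely equation \eqref{Qfrobeeq1}, Theorem \ref{mainthm}, and the standard matrix-rank inequality $\rank(AB)\le\min(\rank(A),\rank(B))$. The point is that all the mode-$3$ machinery (QDFT matrix $F_{\mu,n_3}$, operators $\Circ$, $\unfold$, $\diag$, kernels $T_{\ii},T_{\jj},T_{\kk}$) was defined symmetrically with respect to the chosen mode, so the definitions $\Circ_w$, $\unfold_w$, $\diag_w$, $T_{\ii,w},T_{\jj,w},T_{\kk,w}$ and the products $\proone,\protwo,\prothree$ satisfy, by construction, the exact structural identity that made the mode-$3$ proof work. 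So my strategy is to transport each mode-$3$ argument verbatim.

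For (i), I would apply the QDFT along mode-$w$ to each mode-$w$ fiber $\A(\cdots,:,\cdots)$ of $\A$; since $\sqrt{n_w}F_{\mu,n_w}$ acts as an isometry on a single fiber (which is the content underlying \eqref{Qfrobeeq1}), summing $\|\hat\A_w(i,j,k)\|^2$ over all indices and the fiber direction yields $\|\hat\A_w\|_F^2=n_w\|\A\|_F^2$. Rearranging gives (i).

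For (ii), I would reproduce the computation in the proof of Theorem \ref{mainthm} line by line, replacing $n_3$ with $n_w$, $F_{\mu,n_3}$ with $F_{\mu,n_w}$, $\Circ$/$\unfold$/$\diag$ with $\Circ_w$/$\unfold_w$/$\diag_w$, and $T_\star$ with $T_{\star,w}$. The crucial identity \eqref{eqdiag}, namely $(F_{\mu,n_w}\otimes I)\Circ_w(\A_{\bf e})(F_{\mu,n_w}^*\otimes I)=\diag_w(\hat\A_{{\bf e},w})$ for real $\A_{\bf e}$, and the commutation identities $\ii F_{\mu,n_w}=F_{\mu_\ii,n_w}\ii$, $\jj F_{\mu,n_w}=F_{\mu_\jj,n_w}\jj$, $\kk F_{\mu,n_w}=F_{\mu_\kk,n_w}\kk$, are purely algebraic consequences of the definitions in \eqref{kernel2} and of the identities $F_{\mu,n_w}^*F_{\mu_\ii,n_w}\cdot F_{\mu_\ii,n_w}^*F_{\mu,n_w}=I$ etc., and hold verbatim when $n_3$ is replaced by $n_w$. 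Assembling the four imaginary-unit pieces exactly as in \eqref{eqdiagA} and then following the chain of equalities for $\unfold_w(\hat\C_w)$ through to $\diag_w(\hat\A_w)\cdot\unfold_w(\hat\B_w)$ delivers (ii).

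For (iii), I would use (ii) block-by-block: the block-diagonal identity $\diag_w(\hat\C_w)=\diag_w(\hat\A_w)\diag_w(\hat\B_w)$ gives $\hat C_w^{(l)}=\hat A_w^{(l)}\hat B_w^{(l)}$ for every $l\in[n_w]$. Applying the standard matrix-rank inequality for quaternion matrices yields $\rank(\hat C_w^{(l)})\le\min(\rank(\hat A_w^{(l)}),\rank(\hat B_w^{(l)}))$, and taking the maximum over $l\in[n_w]$ gives $r_w(\C)\le\min(r_w(\A),r_w(\B))$, which is (iii). I do not foresee a real obstacle: the only thing to be mildly careful about is the bookkeeping in (ii), where one must verify that the kernels $T_{\ii,w},T_{\jj,w},T_{\kk,w}$ interact with $(F_{\mu_\star,n_w}^*\otimes I)$ and the circulant block $\Circ_w$ in exactly the same way as in the mode-$3$ case; this is, however, automatic from how these objects were defined.
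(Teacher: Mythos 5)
Your proposal is correct and matches the paper's intent exactly: the paper gives no explicit proof, merely asserting that the result follows from the mode-$3$ machinery (Theorem \ref{mainthm}, \eqref{Qfrobeeq1}, and the matrix rank inequality) transported to mode $w$, which is precisely the argument you spell out. Your more detailed justification of parts (i)--(iii) is a faithful elaboration of that same route, not a different one.
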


We now  investigate multi-gQt-rank of third order quaternion tensor generated by color video, and show that the low rankness is actually an inherent property of many color videos. We select a video data ``Coastguard" in YUV Video Sequences\footnote{\url{http://trace.eas.asu.edu/yuv/}} to group into the quaternion tensor $\tilde{\C}\in\Q^{288\times 352\times 300}$. Figure \ref{fig:singularshow2} shows the low-rank structures of tensor $\tilde{\C}$ in mode 1, 2 and 3.
\begin{figure}[H]
	\centering
	\subfigure[{\small Sampled frames in video}]{
		\begin{minipage}[t]{0.45\linewidth}
			\centering
			\includegraphics[width=1\linewidth]{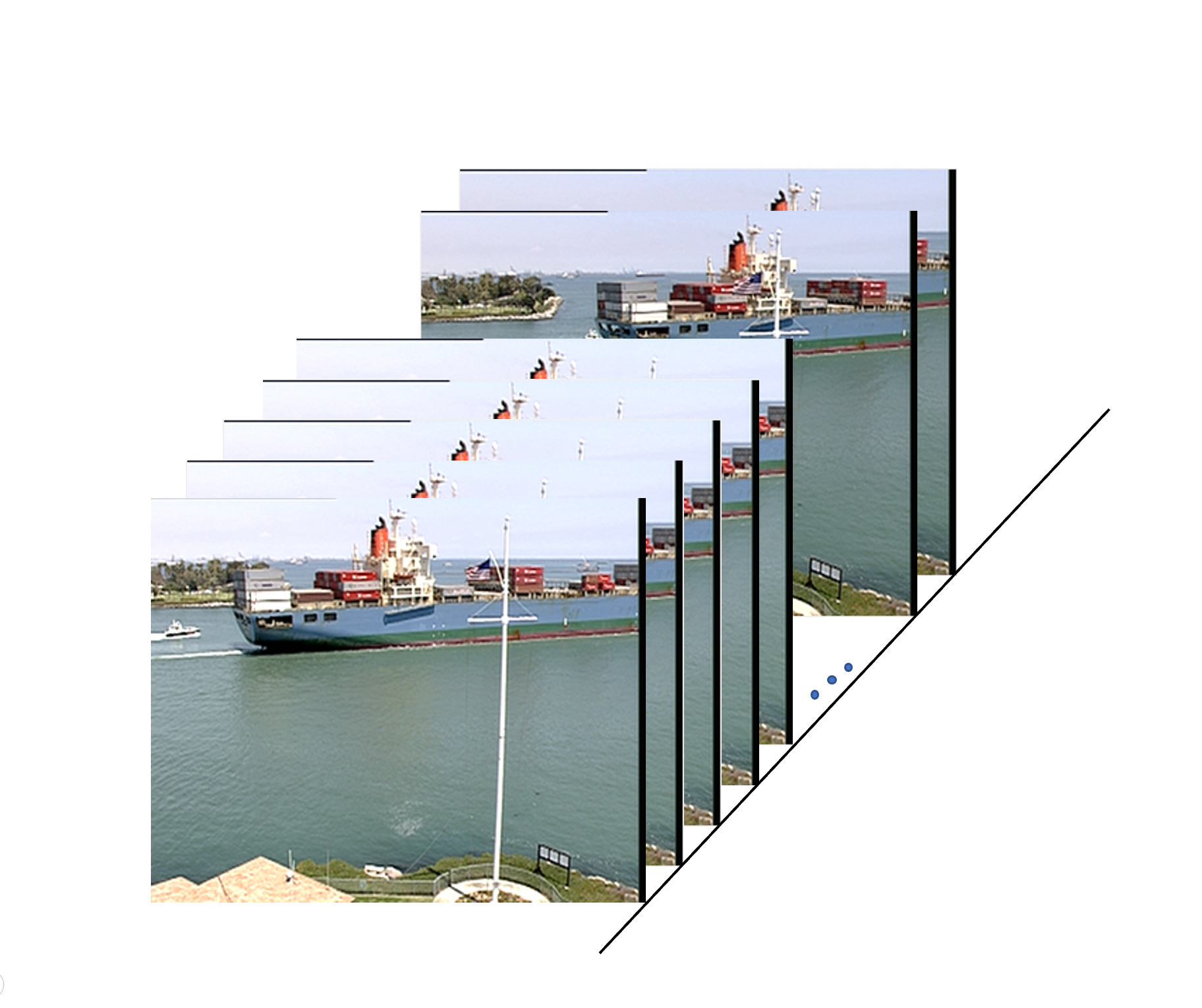}
		\end{minipage}
	}
	\subfigure[{\small Singular values of $\tilde{\C}$ in mode 1}]{
		\begin{minipage}[t]{0.45\linewidth}
			\centering
			\includegraphics[width=1\linewidth]{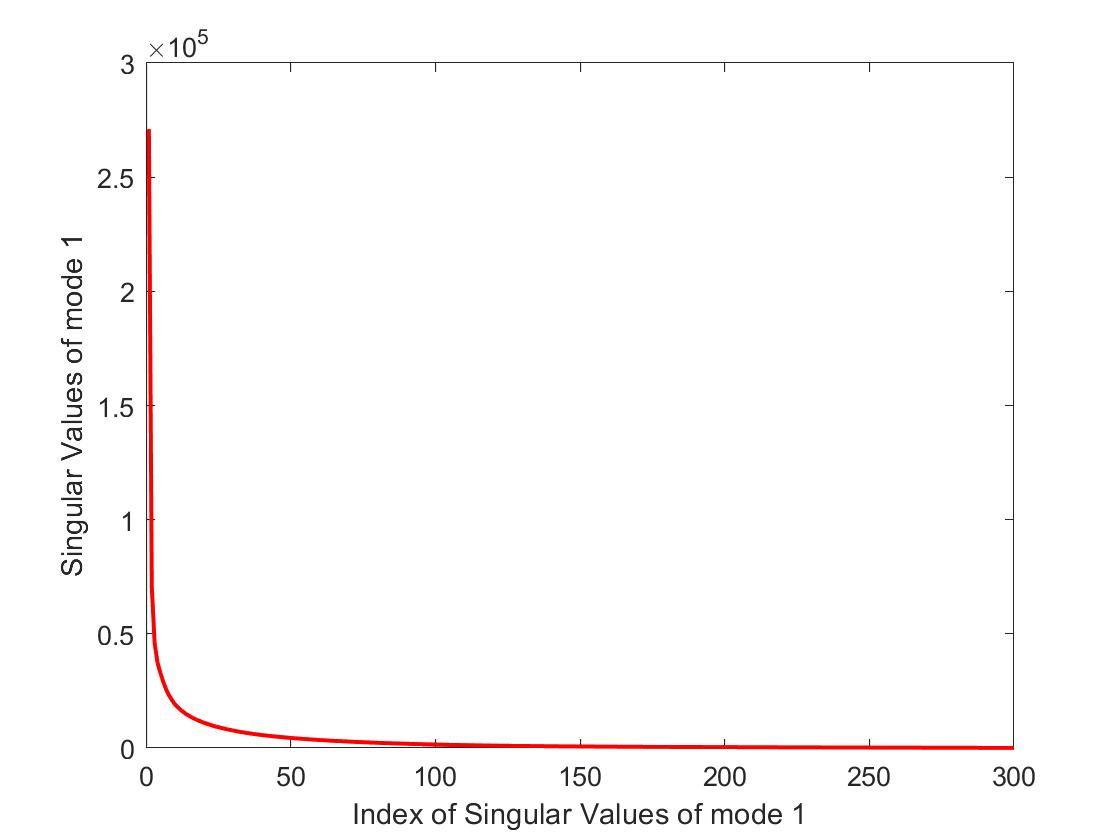}
		\end{minipage}
	}
	
	\subfigure[{\small Singular values of $\tilde{\C}$ in mode 2}]{
		\begin{minipage}[t]{0.45\linewidth}
			\centering
			\includegraphics[width=1\linewidth]{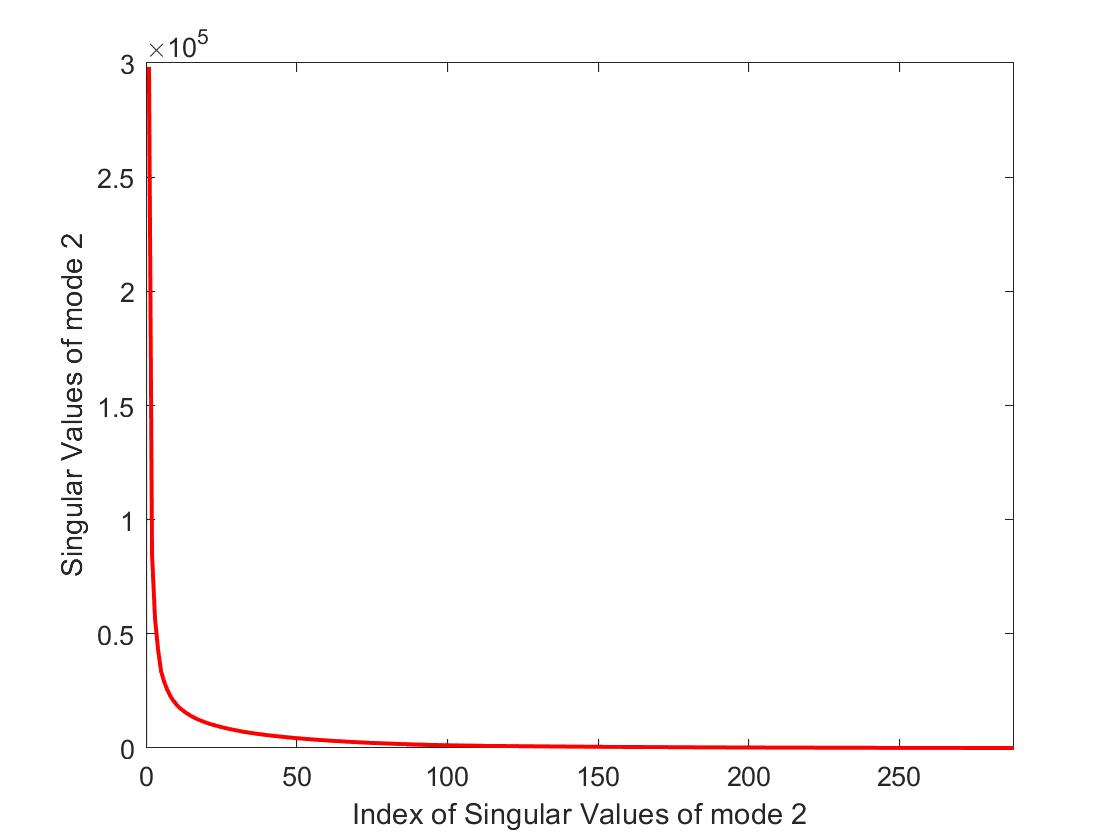}
		\end{minipage}
	}
	\subfigure[{\small Singular values of $\tilde{\C}$ in mode 3}]{
		\begin{minipage}[t]{0.45\linewidth}
			\centering
			\includegraphics[width=1\linewidth]{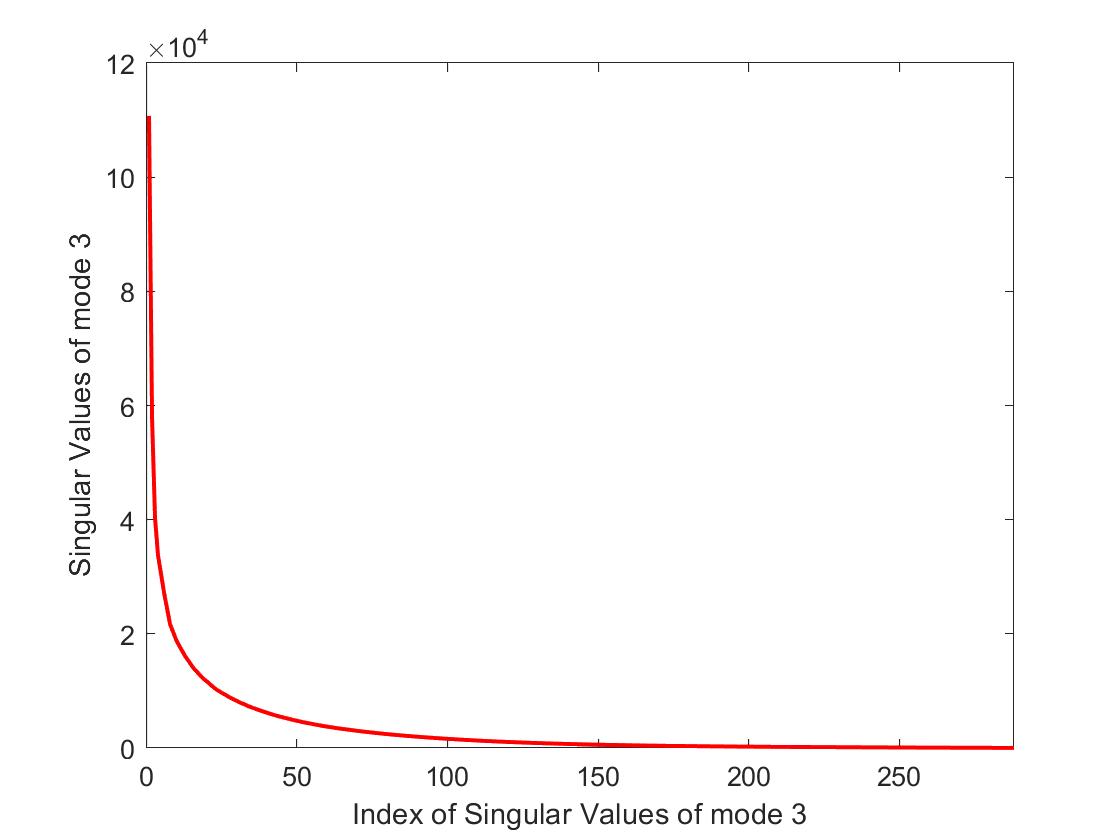}
		\end{minipage}
	}

    \caption{{\small The sampled frames in video and singular values.}}
    \label{fig:singularshow2}
\end{figure}

As mentioned in \cite{PZhou2018,MiaoJi2019,song2021}, when the color image or video data is regarded as quaternion matrices or real tensors, they lie on a union of low-rank subspaces approximately, which lead the low-rank structure of the real data. This is also true for third-order quaternion tensors data. Figures \ref{fig:singularshow1} and \ref{fig:singularshow2} indicate that third-order quaternion tensors generated by color videos in real life have an inherent low-rank property. Hence, in order to recover color videos with partial data loss, we can design tensor completion model as \eqref{sec1model1} via gQt-rqnk and multi-gQt-rank. This is the aim of next section.

\section{Quaternion Tensor Completion for Color Video Inpainting}\label{sec4}
\noindent
\par
In this section, we establish low-rank quaternion tensor completion models based on gQt-rank and multi-gQt-rank to recover color videos with partial data loss. We present an ALS algorithm to solve the proposed models and show that the generated sequence converges to the stationary point of our model.

\subsection{Low gQt-rank quaternion tensor completion model}
\noindent
\par
We first define the operator $\Re$ to get the real part of a quaternion tensor, and the operator $\Im$ to get the imaginary part. Our low gQt-rank quaternion tensor completion is to find the minimal gQt-rank solution satisfying the consistency with the observed data. Let $\mathcal{M}$ be the raw tensor and $\Omega$ be the index set locating the observed data. Then, by \eqref{sec1model1}, the low gQt-rank quaternion tensor completion can be modeled as
\begin{equation}\label{rankrc0}
    \min_{\C\in\Q^{n_1\times n_2\times n_3}}\rank_{gQt}(\C), \quad \text{s.t.}~\ P_{\Omega}(\C-\mathcal{M})=0,\ \Re(\C)=0,
\end{equation}
To solve \eqref{rankrc0} efficiently, the following relaxation via nuclear norm is considered
\begin{equation*}
    \min_{\C\in\Q^{n_1\times n_2\times n_3}}\|\C\|_*, \quad \text{s.t.}~\  P_{\Omega}(\C-\mathcal{M})=0,\ \Re(\C)=0.
\end{equation*}
By Theorem \ref{thmqtsvd}, computing $\|\C\|_*$ is via gQt-SVD of $\C$. However, its computational complexity will be  $O(n_1n_2n_3(\log(n_3)+\min(n_1,n_2)))$. In order to reduce the computational cost, by  Lemma \ref{lemma34}, we consider the following quaternion tensor factorization model:
\begin{equation*}
    \min_{\C, \A, \B}\frac{1}{2}(\|\A\|_F^2+\|\B\|_F^2), \quad \text{s.t.}~\ \C=\A\pro\B,\  P_{\Omega}(\C-\mathcal{M})=0,\ \Re(\C)=0.
\end{equation*}

In practical applications, each frame of the video data has spatial stability feature. We use total variation (TV) to capture these spatial correlation features and consider the square of total variation of data to keep objective function analytic, i.e.,
$$\|\C\times_{1}H_{n_1}\|_F^2+\|\C\times_{2}H_{n_2}\|_F^2=\sum_{k=1}^{n_1-1}\|\C(k,:,:)-\C(k+1,:,:)\|_F^2+\sum_{k=1}^{n_2-1}\|\C(:,k,:)-\C(:,k+1,:)\|_F^2,$$
where $H=\text{Toeplitz}(0,-1,1)$ be an $(n-1)\times n$ Toeplitz matrix, i.e., $$H_{n}=\begin{bmatrix}
		1 & -1 & 0 & \cdots & \cdots & 0 \\
		0 & 1 & -1 & \ddots & \ddots & 0 \\
		0 & 0 & 1 & \ddots & \ddots & 0 \\
		\vdots & \ddots & \ddots & \ddots & \ddots & \ddots \\
		0 &\cdots & \cdots & \cdots & 1 & -1
	\end{bmatrix}\in \mathbb{R}^{(n-1)\times n}.$$
In addition, the real color video quaternion tensor $\C$ is only approximately low gQt-rank, and hence it is likely to fail  to find a low gQt-rank solution strictly satisfying the restriction $\C=\A\pro\B$. Therefore, we usually penalize $\C=\A\pro\B$ into objective function. Thus, we get our final low gQt-rank quaternion tensor completion model as follows:
\begin{align}\label{modelcompute}
    &\min_{\C, \A, \B}\quad f(\C,\A,\B)\doteq\frac{1}{2}\|\A\pro\B-\C\|_F^2+\frac{\lambda}{2}(\|\A\|_F^2+\|\B\|_F^2)+\sum_{k=1}^{2}\lambda_k\|\C\times_{k}H_{n_k}\|_F^2,\nonumber \\
    &~~\text{s.t.}\quad P_{\Omega}(\C-\mathcal{M})=0,\ \Re(\C)=0,
\end{align}
where $\lambda$, $\lambda_1$ and $\lambda_2$ are the penalty parameters.

We will propose an algorithm for solving the model \eqref{modelcompute} in the next subsection.

\subsection{Solution method}\label{subsecsolving}
\noindent
\par
In this subsection, we present an ALS procedure to solve \eqref{modelcompute}. At each iteration, two variables of $\A$, $\B$, $\C$ are fixed and the other one is updated by solving the updated model \eqref{modelcompute}.

At the $t$-th iteration of our method, $\C^{t}$ is updated by
\begin{equation}\label{updatec}
    \C^{t}=\mathop{\arg\min}_{P_{\Omega}(\C-\mathcal{M})=0,\Re(\C)=0}\frac{1}{2}\|\A^{t-1}\pro\B^{t-1}-\C\|_F^2+\sum_{k=1}^{2}\lambda_k\|\C\times_{k}H_{n_k}\|_F^2,
\end{equation}
and $\A^t, \B^t$ are updated by the regularized version of \eqref{modelcompute} as follows:
\begin{align}
    \A^t=\mathop{\arg\min}\ &\frac{1}{2}\|\A\pro\B^{t-1}-\C^{t}\|_F^2+\frac{\lambda}{2}\|\A\|_F^2+\frac{\beta}{2}\|\A-\A^{t-1}\|_F^2, \label{updatea} \\
    \B^t=\mathop{\arg\min}\ &\frac{1}{2}\|\A^{t}\pro\B-\C^{t}\|_F^2+\frac{\lambda}{2}\|\B\|_F^2+\frac{\beta}{2}\|\B-\B^{t-1}\|_F^2, \label{updateb}
\end{align}
where $\beta>0$ is the regularization parameter.

We next to solve the subproblems \eqref{updatec}-\eqref{updateb}. First, we rewrite \eqref{updatec} as
\begin{equation}\label{updatec1}
    \C^{t}=\mathop{\arg\min}_{\Re(\C)=0}\frac{1}{2}\|\A^{t-1}\pro\B^{t-1}-\C\|_F^2+\sum_{k=1}^{2}\lambda_k\|\C\times_{k}H_{n_k}\|_F^2+\delta_{\{ P_{\Omega}(\C-\mathcal{M})=0\} }(\C),
\end{equation}
where $\delta_S(\cdot)$ is the indicator function, i.e., $\delta_S(x)=0$, if $x\in S$; $\delta_S(x)=+\infty$, if $x\notin S$.

For simplicity, set $\C=\C_{\ii}\ii+\C_{\jj}\jj+\C_{\kk}\kk$ whose real part is zero. Thus, the objective function in \eqref{modelcompute} can be written as $f(\C_{\ii},\C_{\jj},\C_{\kk}|\hat{\A},\hat{\B})$ due to $\sqrt{n_3}\|\A\|_F=\|\hat{\A}\|_F$. Denote
\begin{align}
    f_{1}(\C_{\ii},\C_{\jj},\C_{\kk})&=\frac{1}{2}\|\Im(\A^{t-1}\pro\B^{t-1})-\C\|_F^2+\sum_{k=1}^{2}\lambda_k\|\C\times_{k}H_{n_k}\|_F^2, \nonumber \\
    h_{1}(\C_{\ii},\C_{\jj},\C_{\kk})&=\delta_{\{ P_{\Omega}(\C-\mathcal{M})=0\} }(\C). \label{h1eq}
\end{align}
Then, \eqref{updatec1} is equivalent to the following unconstrained optimization problem:
\begin{align}\label{updatec2}
    \min_{\C_{\ii},\C_{\jj},\C_{\kk}}f_{1}(\C_{\ii},\C_{\jj},\C_{\kk})+h_{1}(\C_{\ii},\C_{\jj},\C_{\kk}).
\end{align}
Clearly, $f_1$ is differentiable and $h_1$ is a proper closed convex function. So, we can solve \eqref{updatec2} inexactly by the well-known proximal gradient  method (PGM). We employ the following PGM
with Barzilar-Borwein \cite{BB} line research rule to solve \eqref{updatec2}.
\begin{algorithm}
\caption{BB-PGM for \eqref{updatec2}}\label{alg:2}
\begin{algorithmic}
\STATE {{\bf Input.} The tensor data $\mathcal{M},\A^{t-1}\pro\B^{t-1}$, the observed set $\Omega$, initial step size $\alpha_0$, parameters $\lambda,\lambda_i,H_{n_i},1\leq i\leq 2$.}
\STATE {{\bf Step 0.} Initialize $z^0=[\C_{\ii}^0,\C_{\jj}^0,\C_{\kk}^0]$ which satisfies $P_{\Omega}(\C_{\ii}^0\ii+\C_{\jj}^0\jj+\C_{\kk}^0\kk-\mathcal{M})=0$, $z^{-1}=z^0$. Iterate the following steps for $k=0,1,2,\cdots$ while it does not satisfy stop criterion.}
\STATE {{\bf Step 1.} If $k\geq 1$, choose step size $$\alpha_k=\frac{\|z^{k}-z^{k-1}\|_F^2}{\langle z^{k}-z^{k-1}, \nabla f_1(z^{k})-\nabla f_1(z^{k-1})\rangle},\ \text{or } \alpha_k=\frac{\langle z^{k}-z^{k-1}, \nabla f_1(z^{k})-\nabla f_1(z^{k-1})\rangle}{\|\nabla f_1(z^{k})-\nabla f_1(z^{k-1})\|_F^2}.$$}
\STATE {{\bf Step 2.} Update $[\C_{\ii}^{k+1},\C_{\jj}^{k+1},\C_{\kk}^{k+1}]=\prox_{\alpha_kh_1}\big([\C_{\ii}^{k},\C_{\jj}^{k},\C_{\kk}^{k}]-\alpha_k\nabla f_1(\C_{\ii}^{k},\C_{\jj}^{k},\C_{\kk}^{k})\big)$ .}
\STATE {{\bf Output.} $\C_{\ii}^{k+1}\ii+\C_{\jj}^{k+1}\jj+\C_{\kk}^{k+1}\kk$.}
\end{algorithmic}
\end{algorithm}

Here, $\prox_{\alpha_kh_1}(\cdot)$ in Step 2 of Algorithm \ref{alg:2}  is the proximal mapping of $h_1$ with parameter $\alpha_k>0$, i.e.,
$$\prox_{\alpha_kh_1}(\X)=\mathop{\arg\min}_{\Y\in \mathbb{R}^{n_1\times n_2\times n_3\times 3}}\left\{ \alpha_kh_1(\Y)+\frac{1}{2}\|\Y-\X\|_F^2 \right\}, \ \forall \X\in\mathbb{R}^{n_1\times n_2\times n_3\times 3}.$$
It is known that the proximal mapping of indicator function is a projection, i.e.,
$$\prox_{\alpha_kh_1}(\X)=P_{\{ P_{\Omega}(\X_{\ii}\ii+\X_{\jj}\jj+\X_{\kk}\kk-\mathcal{M})=0\}}(\X),~~ \ \forall \X=[\X_{\ii},\X_{\jj},\X_{\kk}]\in\mathbb{R}^{n_1\times n_2\times n_3\times 3}.$$

For solving \eqref{updatea} and \eqref{updateb}, we consider their matrix versions because the updates  of $\A^t$ and $\B^t$ are just those of their QDFT tensors from the calculation perspective. Hence, by \eqref{Qfrobeeq1}, we can rewrite  \eqref{updatea} and \eqref{updateb} as the following corresponding matrix versions:
\begin{align}\label{restrictauu}
    \hat{\A}^t=\mathop{\arg\min}_{\hat{\A}}\sum_{l=1}^{n_3}\left(\frac{1}{2}\|\hat{\A}^{(l)}\hat{\B}^{t-1,(l)}-\hat{C}^{t,(l)}\|_F^2+\frac{\lambda}{2}\|\hat{\A}^{(l)}\|_F^2+\frac{\beta}{2}\|\hat{\A}^{(l)}-\hat{\A}^{t-1,(l)}\|_F^2 \right),
\end{align}
and
\begin{align}\label{restrictbuu}
    \hat{\B}^{t}&=\mathop{\arg\min}_{\hat{\B}}\sum_{l=1}^{n_3}\left(\frac{1}{2}\|\hat{\A}^{t,(l)}\hat{\B}^{(l)}-\hat{\C}^{t,(l)}\|_F^2+\frac{\lambda}{2}\|\hat{\B}^{(l)}\|_F^2+
    \frac{\beta}{2}\|\hat{\B}^{(l)}-\hat{\B}^{t-1,(l)}\|_F^2\right). \end{align}

To solve the problems \eqref{restrictauu} and \eqref{restrictbuu} with quaternion variables, we apply the following results, which were given in \cite{Yannan2020} to introduce the gradient for a quaternion matrix function and optimality condition for an equality-constrained quaternion matrix optimization.
\begin{definition}[\cite{Yannan2020} Definition 4.1]
    Let $f: \Q^{m\times n}\rightarrow \mathbb{R}$ and $X=X_{\ee}+X_{\ii}\ii+X_{\jj}\jj+X_{\kk}\kk$. $f$ is said to be differentiable at $X$ if $\frac{\partial f}{\partial X_{v}}$ exists at $X_v$ for $v=\ee,\ii,\jj,\kk$. Moreover, its gradient is defined as
    \begin{equation*}
        \nabla_{\Q} f(X)=\frac{\partial f}{\partial X_{\ee}}+\frac{\partial f}{\partial X_{\ii}}\ii+\frac{\partial f}{\partial X_{\jj}}\jj+\frac{\partial f}{\partial X_{\kk}}\kk.
    \end{equation*}
   $f$ is said to be continuously differentiable at $X$ if $\frac{\partial f}{\partial X_{v}}$ exists in a neighborhood of $X_v$ and is continuous at $X_v$ for $v=\ee,\ii,\jj,\kk$. Furthermore, $f$ is said to be continuously differentiable if $f$ is continuously differentiable at any $X\in\Q^{m\times n}$.
\end{definition}

\begin{theorem}[\cite{Yannan2020} Theorem 4.2] \label{Qprogramcondition}
    Suppose that $f: \Q^{m\times n}\rightarrow \mathbb{R}$ is continuously differentiable, and $X^{\#}\in\Q^{m\times n}$ is an optimal solution of $\min\{ f(X)\}$. Then, it holds
        $$\nabla_{\Q}f(X^{\#})=O.$$
\end{theorem}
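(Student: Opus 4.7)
The plan is to reduce the claim to the classical first-order necessary condition on a real Euclidean space, exploiting the fact that $\Q^{m\times n}$ is, as a real vector space, identified with $\mathbb{R}^{m\times n}\times \mathbb{R}^{m\times n}\times\mathbb{R}^{m\times n}\times\mathbb{R}^{m\times n}$ via the decomposition $X=X_{\ee}+X_{\ii}\ii+X_{\jj}\jj+X_{\kk}\kk$. The function $f$ is real-valued, so composing with this identification produces an ordinary real-variable function $\tilde f(X_{\ee},X_{\ii},X_{\jj},X_{\kk}) \doteq f(X_{\ee}+X_{\ii}\ii+X_{\jj}\jj+X_{\kk}\kk)$ on $\mathbb{R}^{4mn}$.

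First I would verify that optimality is preserved under this identification: if $X^{\#}=X^{\#}_{\ee}+X^{\#}_{\ii}\ii+X^{\#}_{\jj}\jj+X^{\#}_{\kk}\kk$ minimizes $f$ over $\Q^{m\times n}$, then the corresponding tuple $(X^{\#}_{\ee},X^{\#}_{\ii},X^{\#}_{\jj},X^{\#}_{\kk})$ minimizes $\tilde f$ over $\mathbb{R}^{4mn}$, because the identification is a bijection and the objective values coincide. Next, since $f$ is continuously differentiable in the sense of the preceding definition, each partial $\partial f/\partial X_v$ (for $v=\ee,\ii,\jj,\kk$) is by construction a real partial derivative of $\tilde f$, and $\tilde f$ is continuously differentiable on $\mathbb{R}^{4mn}$ in the classical sense.

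Then I would apply the standard first-order necessary condition from real calculus: at an unconstrained local minimum on a real Euclidean space, every partial derivative of a continuously differentiable objective vanishes. This yields $\frac{\partial f}{\partial X_v}(X^{\#})=O$ for each $v\in\{\ee,\ii,\jj,\kk\}$. Plugging these four zero matrices into the definition
\begin{equation*}
\nabla_{\Q}f(X^{\#})=\frac{\partial f}{\partial X_{\ee}}(X^{\#})+\frac{\partial f}{\partial X_{\ii}}(X^{\#})\ii+\frac{\partial f}{\partial X_{\jj}}(X^{\#})\jj+\frac{\partial f}{\partial X_{\kk}}(X^{\#})\kk
\end{equation*}
gives $\nabla_{\Q}f(X^{\#})=O$ as claimed.

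There is no real analytic obstacle here; the one point that needs care is purely bookkeeping, namely that the quaternion gradient operator defined component-wise in the basis $\{\mathbf{1},\ii,\jj,\kk\}$ is consistent with the Euclidean gradient of $\tilde f$ written in the same basis, so that vanishing of the Euclidean gradient translates directly into $\nabla_{\Q}f(X^{\#})=O$ rather than into some rotated or scaled quaternion expression. Once that identification is pinned down, the theorem follows with no further work.
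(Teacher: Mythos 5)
Your argument is correct: since Definition 4.1 defines the quaternion gradient component-wise as the four real partial-derivative matrices $\partial f/\partial X_{v}$, $v=\ee,\ii,\jj,\kk$, identifying $\Q^{m\times n}$ with $\mathbb{R}^{4mn}$ and invoking the classical first-order necessary condition (each real partial vanishes at a minimizer) immediately gives $\nabla_{\Q}f(X^{\#})=O$; in fact continuity of the partials is not even needed for this step, only their existence at $X^{\#}$. Note, however, that the paper does not prove this statement itself --- it is imported verbatim from the cited reference --- so there is no in-paper proof to compare against; your reduction to the real Euclidean setting is the standard argument one would expect in that source.
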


By Theorem \ref{Qprogramcondition}, we can find the closed-form solutions to \eqref{updatea} and \eqref{updateb}. For $l\in[n_3]$,  $\hat{\A}^{t,(l)}$ is updated as
\begin{align}
    \hat{\A}^{t,(l)}&=\mathop{\arg\min}_{\hat{\A}}\frac{1}{2}\|\hat{\A}^{(l)}\hat{\B}^{t-1,(l)}-\hat{\C}^{t,(l)}\|_F^2+\frac{\lambda}{2}\|\hat{\A}^{(l)}\|_F^2+\frac{\beta}{2}\|\hat{\A}^{(l)}-\hat{\A}^{t-1,(l)}\|_F^2 \nonumber \\
    &=\big(\hat{\C}^{t,(l)}(\hat{\B}^{t-1,(l)})^*+\beta \hat{\A}^{t-1,(l)}\big)\big(\hat{\B}^{t-1,(l)}(\hat{\B}^{t-1,(l)})^*+(\lambda+\beta)I\big)^{-1}, \label{newupdatea}
\end{align}
and $\hat{\B}^{t,(l)}$ is updated as
\begin{align}
    \hat{\B}^{t,(l)}&=\mathop{\arg\min}_{\hat{\B}}\frac{1}{2}\|\hat{\A}^{t,(l)}\hat{\B}^{(l)}-\hat{\C}^{t,(l)}\|_F^2+\frac{\lambda}{2}\|\hat{\B}^{(l)}\|_F^2+\frac{\beta}{2}\|\hat{\B}^{(l)}-\hat{\B}^{t-1,(l)}\|_F^2 \nonumber \\
    &=\big((\hat{\A}^{t,(l)})^*\hat{\A}^{t,(l)}+(\lambda+\beta)I\big)^{-1}\big((\hat{\A}^{t,(l)})^*\hat{\C}^{t,(l)}+\beta \hat{\B}^{t-1,(l)}\big). \label{newupdateb}
\end{align}

Denote $\Omega^c$ as the complement of the set $\Omega$. Based on above discussions, we propose the following algorithm  to solve our model \eqref{modelcompute}.

\begin{algorithm}
\caption{gQt-Rank Tensor Completion (QRTC)}\label{alg:1}
\begin{algorithmic}
\STATE {{\bf Input.} The tensor data $\mathcal{M}\in\Q^{n_1\times n_2\times n_3}$, the observed set $\Omega$, the rank $\textbf{r}\in\mathbb{Z}_{+}^{n_3}$, parameters $\lambda,\lambda_i,H_{n_i},1\leq i\leq 3$ and $\epsilon$.}
\STATE {{\bf Step 0.} Initialize $\hat{\A}^0,\hat{\B}^0$ and $\C^0$ satisfying $P_{\Omega}(\C^0-\mathcal{M})=0,\ \Re(\C^0)=0$ and the rank of $\hat{\A}^0,\hat{\B}^0$ are less than $\textbf{r}$. Iterate the following steps for $t=1,2,\cdots$ while it does not satisfy stop criterion.}
\STATE {{\bf Step 1.} Compute
\begin{equation}\label{updatect}
	\C^{t}=\mathop{\arg\min}_{P_{\Omega}(\C-\mathcal{M})=0,\Re(\C)=0}\frac{1}{2}\|\A^{t-1}\pro\B^{t-1}-\C\|_F^2+\sum_{k=1}^{2}\lambda_k\|\C\times_{k}H_{n_k}\|_F^2+\sum_{\alpha=\ii,\jj,\kk}\langle \delta^t_{\alpha}, \C_{\alpha}\rangle
\end{equation}
via Algorithm \ref{alg:2}. That is, given parameters of Algorithm \ref{alg:2}, apply BB-PGM to find an approximate solution $\C^t$ of \eqref{updatec1} such that the error vector $\delta^t$ satisfies $\Re(\delta^t)=0,~P_{\Omega}(\delta^t)=0$ and the accuracy condition
\begin{equation}\label{deltaineq}
    \|P_{\Omega^c}(\delta^t)\|_F\leq \frac{1}{4}\|\C^t-\C^{t-1}\|_F.
\end{equation}
}
\STATE {{\bf Step 2.} Compute $\hat{\A}^t$ by  \eqref{newupdatea}.}
\STATE {{\bf Step 3.} Compute $\hat{\B}^t$ by  \eqref{newupdateb}.}
\STATE {{\bf Step 5.} Check the stop criterion: $|f(\C^{k},\A^{k},\B^{k})-f(\C^{k-1},\A^{k-1},\B^{k-1})|/|f(\C^{k-1},\A^{k-1},\B^{k-1})|<\epsilon$.}
\STATE {{\bf Output.} $\C^t$.}
\end{algorithmic}
\end{algorithm}
\subsection{Convergence analysis for QRTC}
\noindent
\par
We now analyze the convergence of QRTC. For convenience, we collect all variables as a real undetermined vector
$$z\doteq \big(\C_{\ii},\C_{\jj},\C_{\kk},\hat{\A}_{\ee},\hat{\A}_{\ii},\hat{\A}_{\jj},\hat{\A}_{\kk},\hat{\B}_{\ee},\hat{\B}_{\ii},\hat{\B}_{\jj},\hat{\B}_{\kk}\big)\in\mathbb{R}^{3n_1n_2n_3+4\|\textbf{r}\|_1(n_1+n_2)}.$$
And then \eqref{modelcompute} can be written as
\begin{align}\label{modelcompute2}
    \min_{z\in\Lambda}f(z),
\end{align}
where $$\Lambda=\{z\in\mathbb{R}^{3n_1n_2n_3+4\|\textbf{r}\|_1(n_1+n_2)}|\ P_{\Omega}(\C_{\ii}\ii+\C_{\jj}\jj+\C_{\kk}\kk-\mathcal{M})=0  \}.$$
Therefore, the projected gradient of $f$ at $z\in\Lambda$ is given as
\begin{equation*}
    \Pi_{\Lambda}(\nabla f(z))=\left(\begin{aligned}
    &\left[P_{\Omega^c}(\frac{\partial f(z)}{\partial \C_{\ii}});P_{\Omega^c}(\frac{\partial f(z)}{\partial \C_{\jj}});P_{\Omega^c}(\frac{\partial f(z)}{\partial \C_{\kk}})\right] \\
    &\qquad \left[\frac{\partial f(z)}{\partial \hat{\A}_{\ee}};\frac{\partial f(z)}{\partial \hat{\A}_{\ii}};\frac{\partial f(z)}{\partial \hat{\A}_{\jj}};\frac{\partial f(z)}{\partial \hat{\A}_{\kk}}\right] \\
    &\qquad \left[\frac{\partial f(z)}{\partial \hat{\B}_{\ee}};\frac{\partial f(z)}{\partial \hat{\B}_{\ii}};\frac{\partial f(z)}{\partial \hat{\B}_{\jj}};\frac{\partial f(z)}{\partial \hat{\B}_{\kk}}\right]
    \end{aligned}\right),
\end{equation*}
where $\Pi_{\Lambda}(\cdot)$ denotes the projection onto the feasible set $\Lambda$.

 \begin{definition}[{\bf stationary point}]
 The point $z^*\in\Lambda$ is said to be a stationary point of the low gQt-rank quaternion tensor completion model \eqref{modelcompute} if $\Pi_{\Lambda}(\nabla f(z^*))=0$. \end{definition}

The following theorems show that the sequence generated by Algorithm \ref{alg:1} is bounded and any accumulation point converges to a stationary point of \eqref{modelcompute}.

\begin{theorem}\label{thmconverge1}
	Let $\{ z^t \}$ be the sequence generated by QRTC. Then, there exists a constant $K_1$ such that
\begin{equation}\label{thm41eq}
f(z^t)-f(z^{t+1})\geq K_1\|z^t-z^{t+1}\|_2^2.
\end{equation}
Moreover, the sequence $\{ z^t \}$ is bounded.
\end{theorem}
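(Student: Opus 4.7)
The plan is to bound the one-step descent $f(z^t)-f(z^{t+1})$ by splitting the update $z^t\to z^{t+1}$ into three substeps corresponding to the consecutive block updates of $\C$, $\A$, and $\B$, obtaining a sufficient-decrease inequality from each substep. Throughout, I would use that $\|\hat{\A}\|_F=\sqrt{n_3}\|\A\|_F$ so that any bound phrased in terms of $\|\hat{\A}^{t+1}-\hat{\A}^t\|_F^2$ automatically gives a bound in terms of $\|\A^{t+1}-\A^t\|_F^2$, absorbing the factor $n_3$ into the final constant $K_1$.

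For the $\C$-step, let $\phi(\C)=\tfrac12\|\A^t\pro\B^t-\C\|_F^2+\sum_{k=1}^2\lambda_k\|\C\times_k H_{n_k}\|_F^2$ restricted to the affine set $\{P_\Omega(\C-\mathcal{M})=0,\ \Re(\C)=0\}$; this $\phi$ is strongly convex with modulus at least $1$ (from the first term). Because $\C^{t+1}$ is the exact minimizer of \eqref{updatect}, comparing the values of the perturbed objective at $\C^{t+1}$ and $\C^t$ gives
\begin{equation*}
\phi(\C^{t+1})+\sum_{\alpha\in\{\ii,\jj,\kk\}}\langle \delta^{t+1}_\alpha,\C^{t+1}_\alpha\rangle+\tfrac12\|\C^{t+1}-\C^t\|_F^2\leq \phi(\C^t)+\sum_{\alpha}\langle \delta^{t+1}_\alpha,\C^t_\alpha\rangle.
\end{equation*}
Since $P_\Omega(\delta^{t+1})=0$ and $P_\Omega(\C^{t+1}-\C^t)=0$ (both iterates satisfy the same data constraint), the inner-product term reduces to $\langle P_{\Omega^c}(\delta^{t+1}),P_{\Omega^c}(\C^{t+1}-\C^t)\rangle$, which by Cauchy–Schwarz and the accuracy condition \eqref{deltaineq} is bounded in absolute value by $\tfrac14\|\C^{t+1}-\C^t\|_F^2$. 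Hence $\phi(\C^t)-\phi(\C^{t+1})\geq \tfrac14\|\C^{t+1}-\C^t\|_F^2$.

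For the $\A$-step and $\B$-step, the subproblems \eqref{updatea}, \eqref{updateb} are strongly convex, with the proximal term $\tfrac{\beta}{2}\|\cdot-\cdot\|_F^2$ supplying modulus at least $\beta$; the closed forms \eqref{newupdatea}, \eqref{newupdateb} deliver the exact minimizers. Comparing the subproblem value at the new and old iterates immediately yields
\begin{equation*}
f(\C^{t+1},\A^t,\B^t)-f(\C^{t+1},\A^{t+1},\B^t)\geq \tfrac{\beta}{2}\|\A^{t+1}-\A^t\|_F^2,
\end{equation*}
and analogously for $\B$. Summing the three substep inequalities produces $f(z^t)-f(z^{t+1})\geq K_1\|z^t-z^{t+1}\|_2^2$ with $K_1=\min\{1/4,\beta/(2n_3)\}$ (the $n_3$ coming from translating $\|\hat{\A}^{t+1}-\hat{\A}^t\|_F^2$ into $\|\A^{t+1}-\A^t\|_F^2$). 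The main obstacle I anticipate is handling the inexact $\C$-update cleanly; the trick is to observe that the perturbation lives only on $\Omega^c$, which makes the error-vs-progress comparison \eqref{deltaineq} directly usable, and this is exactly why the algorithm enforces that particular bound.

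For boundedness of $\{z^t\}$, note that \eqref{thm41eq} implies $f(z^t)\leq f(z^0)$ for all $t$. The Tikhonov terms $\tfrac{\lambda}{2}(\|\A\|_F^2+\|\B\|_F^2)$ in $f$ then immediately force $\{\A^t\},\{\B^t\}$ (equivalently $\{\hat{\A}^t\},\{\hat{\B}^t\}$) to lie in a fixed ball, so $\{\A^t\pro\B^t\}$ is bounded. Since $\tfrac12\|\A^t\pro\B^t-\C^t\|_F^2\leq f(z^0)$, the triangle inequality then yields a uniform bound on $\|\C^t\|_F$, completing the proof that $\{z^t\}$ is bounded.
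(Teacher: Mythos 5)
Your proposal is correct and follows essentially the same route as the paper's proof: a block-wise sufficient-decrease argument in which the inexact $\C$-update is controlled by the accuracy condition \eqref{deltaineq} (giving the $\tfrac14\|\C^{t}-\C^{t+1}\|_F^2$ term), the $\A$- and $\B$-updates are controlled by the proximal/Tikhonov terms, and boundedness follows from monotonicity of $\{f(z^t)\}$ together with the coercive terms $\tfrac{\lambda}{2}(\|\A\|_F^2+\|\B\|_F^2)$ and $\tfrac12\|\A\pro\B-\C\|_F^2$. The only (immaterial) difference is that you obtain the $\A$- and $\B$-step decreases by comparing objective values at the exact minimizer versus the previous iterate, yielding the slightly weaker but still sufficient modulus $\beta/2$ (in the Fourier domain, $\beta/(2n_3)$), whereas the paper expands the difference explicitly and invokes the stationarity conditions from \eqref{newupdatea}--\eqref{newupdateb} to get $(\lambda+2\beta)/(2n_3)$.
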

\begin{proof}
	 Let $h_1$ be defined as \eqref{h1eq}. It is easy to see that $f(\C_{\ii},\C_{\jj},\C_{\kk}|\ \hat{\A},\hat{\B})+h_1(\C_{\ii},\C_{\jj},\C_{\kk})-\frac12\|P_{\Omega^c}(\Re(\C))\|_F^2$ is a  convex function. Since $h_1$ is an indicator function on affine space, we have
	$$\frac{\partial (f+h_1)}{\partial \C_{\alpha}}(\cdot)=P_{\Omega^c}\big(\frac{\partial f}{\partial \C_{\alpha}}(\cdot)\big),~~\ \alpha=\ii,\jj,\kk .$$
It follows from
	\begin{equation}\label{41thm2}
		h_1(\C_{\ii}^{t+1},\C_{\jj}^{t+1},\C_{\kk}^{t+1})=0,~\ h_1(\C_{\ii}^{t},\C_{\jj}^{t},\C_{\kk}^{t})=0,~\ \|\C^{t+1}-\C^{t}\|_F=\|P_{\Omega^c}(\C^{t+1}-\C^{t})\|_F,
	\end{equation}
and the convexity of function $f(\C_{\ii},\C_{\jj},\C_{\kk}|\ \hat{\A},\hat{\B})+h_1(\C_{\ii},\C_{\jj},\C_{\kk})-\frac12\|P_{\Omega^c}(\Re(\C))\|_F^2$ that
	\begin{align}
	f(\C^{t}| \hat{\A}^t,\hat{\B}^t)-f(\C^{t+1}|\hat{\A}^t,\hat{\B}^t)&=f(\C_{\ii}^{t},\C_{\jj}^{t},\C_{\kk}^{t}| \hat{\A}^t,\hat{\B}^t)-f(\C_{\ii}^{t+1},\C_{\jj}^{t+1},\C_{\kk}^{t+1}| \hat{\A}^t,\hat{\B}^t) \nonumber \\
	&\geq \sum_{\alpha=\ii,\jj,\kk} \Big( \left\langle \frac{\partial f+h_1}{\partial \C_{\alpha}}(\C_{\ii}^{t+1},\C_{\jj}^{t+1},\C_{\kk}^{t+1}| \hat{\A}^t,\hat{\B}^t),\C_{\alpha}^{t}-\C_{\alpha}^{t+1}\right\rangle +\frac{1}{2}\|\C_{\alpha}^{t}-\C_{\alpha}^{t+1}\|_F^2 \Big) \nonumber \\
	&=\sum_{\alpha=\ii,\jj,\kk}  \left\langle P_{\Omega^c}\big(\frac{\partial f}{\partial \C_{\alpha}}(\C_{\ii}^{t+1},\C_{\jj}^{t+1},\C_{\kk}^{t+1}| \hat{\A}^t,\hat{\B}^t)\big),P_{\Omega^c}(\C_{\alpha}^{t}-\C_{\alpha}^{t+1})\right\rangle \nonumber \\
	&\qquad +\frac{1}{2}\|\C^{t}-\C^{t+1}\|_F^2 \label{41thm1}
	\end{align}
	From \eqref{updatect},
	\begin{equation}\label{41thm7}
		P_{\Omega^c}(\delta^{t+1}_{\alpha})=P_{\Omega^c}\big(\frac{\partial f}{\partial \C_{\alpha}}(\C_{\ii}^{t+1},\C_{\jj}^{t+1},\C_{\kk}^{t+1}|\ \hat{\A}^t,\hat{\B}^t)\big),\ \alpha=\ii,\jj,\kk.
	\end{equation}
	With \eqref{41thm1}, we can obtain
	\begin{align}
		f(\C^{t}|\hat{\A}^t,\hat{\B}^t)-f(\C^{t+1}|\hat{\A}^t,\hat{\B}^t)&\geq \sum_{\alpha=\ii,\jj,\kk}\langle P_{\Omega^c}(\delta^{t+1}_{\alpha}), P_{\Omega^c}(\C_{\alpha}^{t}-\C_{\alpha}^{t+1})\rangle +\frac{1}{2}\|\C^{t}-\C^{t+1}\|_F^2 \nonumber \\
		&\geq -\|P_{\Omega^c}(\delta^{t+1})\|_F\|P_{\Omega^c}(\C^{t}-\C^{t+1})\|_F+\frac{1}{2}\|\C^{t}-\C^{t+1}\|_F^2 \nonumber \\
		&\geq \frac{1}{4}\|\C^{t}-\C^{t+1}\|_F^2, \label{41thm6}
	\end{align}
where the last inequality holds due to \eqref{41thm2} and \eqref{deltaineq}. Thus,
	\begin{align}
		f(\hat{\A}^{t}| \C^{t+1},\hat{\B}^t)-f(\hat{\A}^{t+1}| \C^{t+1},\hat{\B}^t)&=\frac{1}{n_3}\sum_{l=1}^{n_3}(\frac{1}{2}\|\hat{\A}^{t,(l)}\hat{\B}^{t,(l)}-\hat{C}^{t+1,(l)}\|_F^2+\frac{\lambda}{2}\|\hat{\A}^{t,(l)}\|_F^2 \nonumber \\
		&\qquad \quad -\frac{1}{2}\|\hat{\A}^{t+1,(l)}\hat{\B}^{t,(l)}-\hat{C}^{t+1,(l)}\|_F^2-\frac{\lambda}{2}\|\hat{\A}^{t+1,(l)}\|_F^2 ) \nonumber \\
		&=\frac{1}{n_3}\sum_{l=1}^{n_3}\bigg(\frac{1}{2}\|(\hat{\A}^{t,(l)}-\hat{\A}^{t+1,(l)})\hat{\B}^{t,(l)}\|_F^2+\frac{\lambda}{2}\|\hat{\A}^{t,(l)}-\hat{\A}^{t+1,(l)}\|_F^2 \nonumber \\
		&\qquad \quad \Re\Big(\Tr\big((\hat{\A}^{t+1,(l)}\hat{\B}^{t,(l)}-\hat{C}^{t+1,(l)})(\hat{\B}^{t,(l)})^*(\hat{\A}^{t,(l)}-\hat{\A}^{t+1,(l)})^*\big) \nonumber \\
		&\qquad \quad +\lambda \Tr\big( \hat{\A}^{t+1,(l)}(\hat{\A}^{t,(l)}-\hat{\A}^{t+1,(l)})^* \big) \Big)\bigg). \label{41thm3}
	\end{align}
	By Theorem \ref{Qprogramcondition} and \eqref{newupdatea}, it holds for any $l\in[n_3]$,
	\begin{equation*}
		(\hat{\A}^{t+1,(l)}\hat{\B}^{t,(l)}-\hat{C}^{t+1,(l)})(\hat{\B}^{t,(l)})^*+\lambda\hat{\A}^{t+1,(l)}=\beta (\hat{\A}^{t,(l)}-\hat{\A}^{t+1,(l)}),
	\end{equation*}
which, together with \eqref{41thm3}, implies
	\begin{align}
		f(\hat{\A}^{t}| \C^{t+1},\hat{\B}^t)-f(\hat{\A}^{t+1}| \C^{t+1},\hat{\B}^t)&=\frac{1}{n_3}\sum_{l=1}^{n_3}\Big(\frac{1}{2}\|(\hat{\A}^{t,(l)}-\hat{\A}^{t+1,(l)})\hat{\B}^{t,(l)}\|_F^2+(\frac{\lambda}{2}+\beta)\|\hat{\A}^{t,(l)}-\hat{\A}^{t+1,(l)}\|_F^2\Big) \nonumber \\
		&\geq \frac{1}{n_3}\sum_{l=1}^{n_3}(\frac{\lambda}{2}+\beta)\|\hat{\A}^{t,(l)}-\hat{\A}^{t+1,(l)}\|_F^2 \nonumber \\
		&=\frac{\lambda+2\beta}{2n_3}\|\hat{\A}^t-\hat{\A}^{t+1}\|_F^2. \label{41thm4}
	\end{align}
	Similarly, we have
		\begin{align}
		f(\hat{\B}^{t}| \C^{t+1},\hat{\A}^{t+1})-f(\hat{\B}^{t+1}| \C^{t+1},\hat{\A}^{t+1})\geq\frac{\lambda+2\beta}{2n_3}\|\hat{\B}^t-\hat{\B}^{t+1}\|_F^2. \label{41thm5}
	\end{align}
	Therefore, combining \eqref{41thm6}, \eqref{41thm4} and \eqref{41thm5}, we get
	\begin{equation*}
		f( \C^{t},\hat{\A}^{t},\hat{\B}^{t})-f( \C^{t+1},\hat{\A}^{t+1},\hat{\B}^{t+1})\geq \min\Big(\frac{1}{4},\frac{\lambda+2\beta}{2n_3}\Big)\|z^{t}-z^{t+1}\|_2^2.
	\end{equation*}
Taking $$K_1=\min\left(\frac{1}{4},\frac{\lambda}{2}+\frac{\lambda+2\beta}{2n_3}\right),$$ we prove that \eqref{thm41eq} holds and hence the sequence $\{ f(z^t) \}$ is monotonically decreasing.
	
It follows from $f\geq 0$ that
	$$\sum_{t=1}^{\infty}\big(f(z^t)-f(z^{t+1})\big)<\infty,\quad \sum_{t=1}^{\infty}\|z^t-z^{t+1}\|_2^2<\infty,\quad \lim_{t\rightarrow\infty}z^t-z^{t+1}=0.$$
	Since $$f(z^1)\geq f(z^t)\geq \frac{\lambda}{2}(\|\A^t\|_F^2+\|\B^t\|_F^2),$$ $\{ \A^t \}$, $\{ \B^t \}$ are bounded, and so $\{ \hat{\A}^t \}$, $\{ \hat{\B}^t \}$ are also bounded. Together with the fact that $$f(z^1)\geq f(z^t)\geq \|\A^t\pro\B^t-\C^t\|_F^2,$$ $\{ \C^t \}$ is also bounded, and hence $\{ z^t \}$ is bounded.
\end{proof}

\begin{theorem}\label{thmconverge11}
Let $\{ z^t \}$ be the sequence generated by QRTC. Then, there exists a constant $K_2$ such that
\begin{equation}\label{thm42eq}
K_2\|z^t-z^{t+1}\|_2\geq\|\Pi_{\Lambda}\big( \nabla f(z^t) \big)\|_F.
\end{equation}
Moreover, any accumulation point of $\{ z^t \}$ is a stationary point of \eqref{modelcompute}.
\end{theorem}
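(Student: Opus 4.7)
The plan is to derive the bound \eqref{thm42eq} from the first-order optimality conditions of the three block updates in QRTC, and then combine it with Theorem \ref{thmconverge1} to conclude that every accumulation point is stationary. First I will record three partial-update identities available at iteration $t$. The closed-form update \eqref{newupdateb} of $\hat{\B}^t$, read as a first-order condition in the QDFT domain slice by slice, gives $\nabla_{\hat{\B}} f(z^t) = -c_1(\hat{\B}^t - \hat{\B}^{t-1})$ for an explicit constant $c_1$ proportional to $\beta$. Likewise \eqref{newupdatea} yields $\nabla_{\hat{\A}} f(\C^t,\hat{\A}^t,\hat{\B}^{t-1}) = -c_1(\hat{\A}^t - \hat{\A}^{t-1})$, and the inexact $\C$-update, via \eqref{41thm7} and \eqref{deltaineq}, gives $P_{\Omega^c}\bigl(\nabla_\C f(\C^t,\hat{\A}^{t-1},\hat{\B}^{t-1})\bigr) = \delta^t$ with $\|\delta^t\|_F \le \tfrac{1}{4}\|\C^t-\C^{t-1}\|_F$.

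Second, to pass from these partial-update identities to $\Pi_\Lambda(\nabla f(z^t))$ evaluated at the common point $z^t$, I will add and subtract and control each resulting difference through local Lipschitz continuity of the corresponding partial gradient in the remaining block. For the $\hat{\A}$-block, for example,
\[
\nabla_{\hat{\A}} f(z^t) = \bigl[\nabla_{\hat{\A}} f(\C^t,\hat{\A}^t,\hat{\B}^t) - \nabla_{\hat{\A}} f(\C^t,\hat{\A}^t,\hat{\B}^{t-1})\bigr] - c_1(\hat{\A}^t - \hat{\A}^{t-1}),
\]
so $\|\nabla_{\hat{\A}} f(z^t)\|_F \le L_1\|\hat{\B}^t-\hat{\B}^{t-1}\|_F + c_1\|\hat{\A}^t-\hat{\A}^{t-1}\|_F$, where $L_1$ depends only on a uniform bound on $\{\hat{\A}^t,\hat{\B}^t\}$ furnished by Theorem \ref{thmconverge1}; the same device, applied twice (once through $\hat{\A}$ and once through $\hat{\B}$), handles the $\C$-block. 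Summing the three block bounds and absorbing constants produces a single $K_2$ with $\|\Pi_\Lambda(\nabla f(z^t))\|_F \le K_2\|z^t-z^{t-1}\|_2$, which, modulo the index shift used in the paper's convention for \eqref{thm41eq}--\eqref{thm42eq}, is \eqref{thm42eq}.

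Third, for the accumulation-point assertion, Theorem \ref{thmconverge1} gives monotone decrease of $\{f(z^t)\}$ together with $f\ge 0$, so telescoping \eqref{thm41eq} yields $\sum_t\|z^t-z^{t+1}\|_2^2<\infty$, hence $\|z^t-z^{t+1}\|_2\to 0$. Combined with \eqref{thm42eq} this forces $\|\Pi_\Lambda(\nabla f(z^t))\|_F\to 0$. Now fix an accumulation point $z^\ast$ and a subsequence $z^{t_k}\to z^\ast$. Since $\Lambda$ is an affine set, $\Pi_\Lambda$ is linear and continuous; $\nabla f$ is continuous; therefore $\Pi_\Lambda(\nabla f(z^\ast))=\lim_k\Pi_\Lambda(\nabla f(z^{t_k}))=0$, i.e., $z^\ast$ is a stationary point of \eqref{modelcompute}.

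The main obstacle will be making the Lipschitz estimates in the second step uniform along the trajectory. Two facts from earlier in the paper make this routine: the Frobenius isometry \eqref{Qfrobeeq1} carries the estimates into the QDFT domain, where gQt-product becomes the block-diagonal matrix product of Theorem \ref{mainthm}; and boundedness of $\{(\hat{\A}^t,\hat{\B}^t,\C^t)\}$ from Theorem \ref{thmconverge1} turns the bilinear $\hat{\A}\hat{\B}$ term in the partial gradients into a Lipschitz map along the trajectory with a uniform constant.
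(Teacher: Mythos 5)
Your proposal is correct and follows essentially the same route as the paper: exact first-order conditions from the closed-form $\hat{\A}$, $\hat{\B}$ updates plus the $\delta^t$ accuracy condition for the $\C$-update, a triangle inequality against the gradient at the common point controlled by Lipschitz continuity of $\nabla f$ on the compact set furnished by Theorem \ref{thmconverge1}, and then continuity of $\Pi_{\Lambda}\circ\nabla f$ along a convergent subsequence. The only point to tidy is the index shift you flag: to obtain \eqref{thm42eq} exactly as stated (with the forward difference $\|z^t-z^{t+1}\|_2$, which is what the subsequent KL argument in Theorem \ref{thmconverge2} consumes), apply your block optimality identities to the $(t+1)$-th updates rather than the $t$-th, as the paper does.
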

\begin{proof}	
	By Theorem \ref{thmconverge1}, $\{ z^t \}$ is bounded. So, there exists a compact convex set $Z$ such that $\{ z^t \}\subset Z$. Since $f$ is a quadratic polynomial in $Z$, its gradient is Lipschitz in $Z$ with the Lipschitz constant $L_f$, that is,
	$$\|\nabla f(z)-\nabla f(z')\|_2\leq L_f\|z-z'\|_2,\ \forall z,z'\in Z.$$
	By Theorem \ref{Qprogramcondition}, \eqref{newupdatea} and \eqref{newupdateb}, for any $l\in[n_3]$,  we have
	\begin{align*}
		\nabla_{\Q}f(\hat{\A}^{t+1,(l)}|\hat{\A}^{t+1,(-l)},\C^{t+1},\hat{\B}^{t})&=\frac{\beta}{n_3} (\hat{\A}^{t,(l)}-\hat{\A}^{t+1,(l)}), \\
		\nabla_{\Q}f(\hat{\B}^{t+1,(l)}|\hat{\B}^{t+1,(-l)},\C^{t+1},\hat{\A}^{t+1})&=\frac{\beta}{n_3} (\hat{\B}^{t,(l)}-\hat{\B}^{t+1,(l)}),
	\end{align*}
	where $\hat{\A}^{t+1,(-l)}$ and $\hat{\B}^{t+1,(-l)}$ denote $\hat{\A}^{t+1}$ and $\hat{\B}^{t+1}$ except $\hat{\A}^{t+1,(l)}$ and $\hat{\B}^{t+1,(l)}$, respectively. Set
	\begin{align*}
		\frac{\partial f}{\partial \hat{\A}}=\left[\frac{\partial f}{\partial \hat{\A}_{\ee}};\frac{\partial f}{\partial \hat{\A}_{\ii}};\frac{\partial f}{\partial \hat{\A}_{\jj}};\frac{\partial f}{\partial \hat{\A}_{\kk}}\right], \qquad \frac{\partial f}{\partial \hat{\B}}=\left[\frac{\partial f}{\partial \hat{\B}_{\ee}};\frac{\partial f}{\partial \hat{\B}_{\ii}};\frac{\partial f}{\partial \hat{\B}_{\jj}};\frac{\partial f}{\partial \hat{\B}_{\kk}}\right],
	\end{align*}
then,
	\begin{align}
		\|\frac{\partial f}{\partial \hat{\A}}(z^t)\|_F&\leq \|\frac{\partial f}{\partial \hat{\A}}(z^t)-\frac{\partial f}{\partial \hat{\A}}(\C^{t+1},\A^{t+1},\B^t)\|_F+\|\frac{\partial f}{\partial \hat{\A}}(\C^{t+1},\A^{t+1},\B^t)\|_F \nonumber \\
		&\leq L_f\|z^{t}-z^{t+1}\|_2+\sum_{l=1}^{n_3} \|\frac{\partial f}{\partial \hat{\A}^{(l)}}(\C^{t+1},\A^{t+1},\B^t)\|_F \nonumber \\
		&=L_f\|z^{t}-z^{t+1}\|_2+\sum_{l=1}^{n_3} \|\nabla_{\Q}f(\hat{\A}^{t+1,(l)}|\hat{\A}^{t+1,(-l)},\C^{t+1},\hat{\B}^{t})\|_F \nonumber \\
		&=L_f\|z^{t}-z^{t+1}\|_2+\sum_{l=1}^{n_3} \|\frac{\beta}{n_3} (\hat{\A}^{t,(l)}-\hat{\A}^{t+1,(l)})\|_F \nonumber \\
		&\leq (L_f+\frac{\beta}{n_3})\|z^{t}-z^{t+1}\|_2. \label{41thm81}
	\end{align}
	Similarly, we have
	\begin{equation}\label{41thm82}
		\|\frac{\partial f}{\partial \hat{\B}}(z^t)\|_F\leq (L_f+\frac{\beta}{n_3})\|z^{t}-z^{t+1}\|_2 .
	\end{equation}
It follows from \eqref{41thm7} that
	\begin{align}
		\sum_{\alpha=\ii,\jj,\kk}\| P_{\Omega^c}\frac{\partial f}{\partial \C_{\alpha}}(z^t)\|_F&\leq \sum_{\alpha=\ii,\jj,\kk}\Big(\| \big(P_{\Omega^c}\frac{\partial f}{\partial \C_{\alpha}}(z^t)-P_{\Omega^c}\frac{\partial f}{\partial \C_{\alpha}}(\C^{t+1},\hat{\A}^t,\hat{\B}^t)\big)\|_F+\|P_{\Omega^c}\frac{\partial f}{\partial \C_{\alpha}}(\C^{t+1},\hat{\A}^t,\hat{\B}^t)\|_F\Big) \nonumber \\
		&\leq \|\Pi_{\Lambda}\big( \nabla f(z^t) \big)-\Pi_{\Lambda}\big( \nabla f(\C^{t+1},\hat{\A}^t,\hat{\B}^t) \big)\|_F+\sum_{\alpha=\ii,\jj,\kk}\|P_{\Omega^c}(\delta_{\alpha}^{t+1})\|_F \nonumber \\
		&\leq L_f\|z^{t}-z^{t+1}\|_2+\frac{1}{4}\|\C^t-\C^{t+1}\|_F \nonumber \\
		&\leq (L_f+\frac{1}{4})\|z^{t}-z^{t+1}\|_2. \label{41thm83}
	\end{align}
	Combining \eqref{41thm81}, \eqref{41thm82} and \eqref{41thm83}, it is easy to see that \eqref{thm42eq} holds with $K_2=L_f+\max(\frac{1}{4},\frac{\beta}{n_3})$.
	
	Since $\{ z^t \}$ is bounded, there exists a convergent subsequence of $\{ z^t \}$. Without loss of generality, we assume that $\lim_{k\rightarrow\infty}z^{t_k}=z^*$. Then,
	\begin{align*}
		\|\Pi_{\Lambda}\big( \nabla f(z^*) \big)\|_F&\leq \|\Pi_{\Lambda}\big( \nabla f(z^*)-\nabla f(z^{t_k}) \big)\|_F+	\|\Pi_{\Lambda}\big( \nabla f(z^{t_k}) \big)\|_F \\
		&\leq L_f\|z^*-z^{t_k}\|_2+K_3\|z^{t_k}-z^{t_k+1}\|_2,
	\end{align*}
	which, together with taking limit $k\rightarrow\infty$ in the right hand side, shows $\Pi_{\Lambda}\big( \nabla f(z^*) \big)=0$, and hence $z^*$ is a stationary point of \eqref{modelcompute}.
\end{proof}

Theorems \ref{thmconverge1} and \ref{thmconverge11} show that the sequence $\{ z^t \}$ generated by QRTC is bounded and its any accumulation point is a stationary point of \eqref{modelcompute}. We next use the Kurdyka-{\L}ojasiewicz (KL) property \cite{KL1,KL2,KL3} to prove that $\{ z^t \}$ is convergent.
\begin{definition}[{\bf KL property}]\label{KLdf}
	Let $Z\in\mathbb{R}^n$ be an open set and $f : Z\rightarrow\mathbb{R}$ be a semi-algebraic function. For every critical point $z^*\in Z$ of $f$, there is a neighborhood $Z'\in Z$ of $z^*$, an exponent $\theta\in [0,1)$ and a positive constant $K_3$ such that
	\begin{equation}\label{klineq}
		|f(z)-f(z^*)|^{\theta}\leq K_3\|\Pi_{\Lambda}\big(\nabla f(z)\big)\|_F,\qquad \forall z\in Z'.
	\end{equation}
\end{definition}

It is obvious that $f(z)+\delta_{\Lambda}(z)$ is a semi-algebraic function. Then, from Definition \ref{KLdf} and \cite{KL3}, the KL inequality (\ref{klineq}) holds for the function $f(z)+\delta_{\Lambda}(z)$. Hence, for the given $\theta, K_3$  in Definition \ref{KLdf} and $K_1, K_2$ are defined as Theorems \ref{thmconverge1} and \ref{thmconverge11}, we have the following convergence results.

\begin{theorem}\label{thmconverge2}
Let $\{ z^t \}$ be the sequence generated by QRTC and $z^*$ be an accumulation point of $\{ z^t \}$. Assume $z^{0}\in B(z^*, \tau)\doteq \{ z|\ \|z-z^*\|_F<\tau \}\subset Z'$ with
	$$\tau>\frac{K_2 K_3}{K_1(1-\theta)}|f(z^0)-f(z^*)|^{1-\theta}+\|z^0-z^*\|_2,$$
then, $z^t\in B(z^*, \tau)$ for $t=0,1,2,\ldots$. Moreover,
\begin{equation}\label{resu2}\sum_{t=0}^{\infty}\|z^{t+1}-z^{t}\|_2\leq \frac{K_2 K_3}{K_1(1-\theta)}|f(z^0)-f(z^*)|^{1-\theta},\quad \lim_{t\to\infty}z^t=z^*.
\end{equation}
\end{theorem}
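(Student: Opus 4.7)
The strategy is the standard three-ingredient KL argument for descent methods: I would combine the sufficient-decrease estimate of Theorem \ref{thmconverge1}, the relative-error bound on the projected gradient of Theorem \ref{thmconverge11}, and the KL inequality of Definition \ref{KLdf} applied to $f+\delta_{\Lambda}$ near $z^*$. Set $\Delta_t \doteq (f(z^t)-f(z^*))^{1-\theta}$. Since $\{f(z^t)\}$ is monotonically decreasing by Theorem \ref{thmconverge1} and admits $z^*$ as an accumulation point with $f$ continuous, one has $f(z^t)\geq f(z^*)$ for every $t$; the degenerate case $f(z^t)=f(z^*)$ is handled separately by noting that $K_1\|z^{t+1}-z^t\|_2^2\leq f(z^t)-f(z^{t+1})=0$ forces the tail to be constant, so \eqref{resu2} holds trivially from that index onward. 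Hence I may assume $f(z^t)>f(z^*)$ throughout what follows.

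The central step chains the three inequalities. By concavity of $s\mapsto s^{1-\theta}$ on $(0,\infty)$,
\begin{equation*}
\Delta_t-\Delta_{t+1}\;\geq\;(1-\theta)\,(f(z^t)-f(z^*))^{-\theta}\,(f(z^t)-f(z^{t+1})).
\end{equation*}
The KL inequality gives $(f(z^t)-f(z^*))^{-\theta}\geq 1/(K_3\|\Pi_{\Lambda}(\nabla f(z^t))\|_F)$; Theorem \ref{thmconverge11} upgrades this to $(f(z^t)-f(z^*))^{-\theta}\geq 1/(K_2K_3\|z^{t+1}-z^t\|_2)$; and Theorem \ref{thmconverge1} bounds $f(z^t)-f(z^{t+1})$ from below by $K_1\|z^{t+1}-z^t\|_2^2$. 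Substituting yields the key summable step-size bound
\begin{equation*}
\|z^{t+1}-z^t\|_2\;\leq\;\frac{K_2K_3}{K_1(1-\theta)}\,(\Delta_t-\Delta_{t+1}),
\end{equation*}
which telescopes over $t=0,1,\ldots,N$ to give $\sum_{t=0}^{N}\|z^{t+1}-z^t\|_2\leq \frac{K_2K_3}{K_1(1-\theta)}\Delta_0$, proving the summability part of \eqref{resu2}.

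For the trapping property $z^t\in B(z^*,\tau)$, I would argue by induction: supposing $z^0,\dots,z^t\in B(z^*,\tau)\subset Z'$, the KL inequality is legitimately available at each of these iterates, so the telescoping bound above applies and
\begin{equation*}
\|z^{t+1}-z^*\|_2\;\leq\;\|z^0-z^*\|_2+\sum_{k=0}^{t}\|z^{k+1}-z^k\|_2\;\leq\;\|z^0-z^*\|_2+\frac{K_2K_3}{K_1(1-\theta)}\Delta_0\;<\;\tau
\end{equation*}
by the assumed lower bound on $\tau$, closing the induction. The finite-length conclusion then forces $\{z^t\}$ to be Cauchy, hence convergent, and since $z^*$ is already an accumulation point the limit must equal $z^*$.

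The main obstacle is the circular dependence inherent in every KL proof: one may only invoke the KL inequality at iterates lying in the neighborhood $Z'$ where it is valid, yet proving that the iterates stay in $Z'$ requires the very step-size estimate that the KL inequality produces. This is what forces the step-size bound and the trapping property to be established together inside a single induction, and it is precisely why the explicit lower bound on $\tau$ in the hypothesis is calibrated to accommodate both $\|z^0-z^*\|_2$ and the worst-case tail $\sum\|z^{k+1}-z^k\|_2$. A secondary technical point is checking that the KL inequality stated in Definition \ref{KLdf} in terms of $\Pi_{\Lambda}(\nabla f)$ really applies to the semi-algebraic function $f+\delta_{\Lambda}$; this follows because $\Lambda$ is an affine subspace, so $\Pi_{\Lambda}(\nabla f(z))$ coincides with the distance from $0$ to $\partial(f+\delta_{\Lambda})(z)$, which is the quantity appearing in the standard KL property.
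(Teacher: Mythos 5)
Your proposal is correct and follows essentially the same route as the paper's proof: the quantity $\Delta_t=(f(z^t)-f(z^*))^{1-\theta}$ is, up to the constant $K_3/(1-\theta)$, exactly the paper's desingularizing function $\phi(f(z^t))$, and the chaining of the sufficient-decrease bound, the relative-error bound, and the KL inequality inside a single trapping induction is identical. The only (welcome) refinements beyond the paper's write-up are your explicit treatment of the degenerate case $f(z^t)=f(z^*)$ and the remark justifying the use of $\Pi_{\Lambda}(\nabla f)$ in the KL inequality for $f+\delta_{\Lambda}$.
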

\begin{proof}
We show $\{z^t\}\subset B(z^*, \tau)$ by induction. When $t=0$, it holds obviously. Assume that $z^t\in B(z^*, \tau)$ holds for all $t\leq \hat{t}$, then KL property is true for $z^t$. Now we display that $z^t\in B(z^*, \tau)$ is true when $t=\hat{t}+1$. Let $$\phi(x)\doteq\frac{ K_3}{1-\theta}|x-f(z^*)|^{1-\theta},\quad x>f(z^*).$$Then $\phi(x)$ is concave and differentiable. Hence, we have
	$$\phi\big(f(z^t)\big)-\phi\big(f(z^{t+1})\big)\geq\phi^{'}\big(f(z^t)\big)\big( f(z^t)-f(z^{t+1}) \big)=\frac{K_3}{|f(z^t)-f(z^{*})|^{\theta}}\big( f(z^t)-f(z^{t+1}) \big).$$
By Definition \ref{KLdf}, \eqref{thm41eq} and \eqref{thm42eq}, we get
	\begin{equation*}
		\phi\big(f(z^t)\big)-\phi\big(f(z^{t+1})\big)\geq\frac{1}{\|\Pi_{\Lambda}\big( \nabla f(z^t) \big)\|_F}\big( f(z^t)-f(z^{t+1}) \big)\geq \frac{K_1}{K_2}\|z^{t}-z^{t+1}\|_2.
	\end{equation*}
	Therefore,
	\begin{align}\label{43thm1}
		\sum_{p=0}^{t}\|z^{p}-z^{p+1}\|_2\leq \frac{K_2}{K_1}\sum_{p=0}^{t}\Big( \phi\big(f(z^p)\big)-\phi\big(f(z^{p+1})\big) \Big)\leq\frac{K_2}{K_1}\phi\big(f(z^0)\big),
	\end{align}
which implies
	\begin{equation*}
		\|z^{\hat{t}+1}-z^*\|_2\leq\sum_{p=0}^{\hat{t}}\|z^{p+1}-z^p\|_2+\|z^{0}-z^*\|_2\leq\frac{K_2}{K_1}\phi\big(f(z^0)\big)+\|z^0-z^*\|_2\leq\tau .
	\end{equation*}
Thus, $\{z^t\}\subset B(z^*, \tau)$.
	
Taking $t\rightarrow\infty$ in \eqref{43thm1}, the first inequality in \eqref{resu2} is arrived. Without loss of generality, we assume that $\lim_{k\rightarrow\infty}z^{t_k}=z^*$. Then, for all $t>0$ and $t_{k+1}\geq t>t_{k}$,
	\begin{align*}
		\|z^t-z^*\|_2\leq \|z^{t_k}-z^*\|_2+\sum_{p=t_k}^{t-1}\|z^{p+1}-z^p\|_2,
	\end{align*}
which, together with the fact that  $\{ \|z^{t+1}-z^t\|_2 \}$ is Cauchy sequence, implies  $\lim_{t\to\infty}z^t=z^*$.
\end{proof}
\begin{theorem}
	Suppose that $\{ z^t \}$ is the sequence generated by QRTC and $z^*$ be its limit point. Then, the following statements hold.
	
	\emph{(i)} If $\theta\in( 0,\frac{1}{2} ]$, there exist $\gamma>0$ and $\xi\in(0,1)$ such that
	$$\|z^t-z^*\|_2\leq\gamma\xi^t.$$
	
	\emph{(ii)} If $\theta\in( \frac{1}{2},1)$, there exists $\gamma>0$ such that
	$$\|z^t-z^*\|_2\leq\gamma t^{-\frac{1-\theta}{2\theta-1}}.$$
\end{theorem}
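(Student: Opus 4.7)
The plan is to combine the Kurdyka--{\L}ojasiewicz inequality with the descent estimate \eqref{thm41eq} and the relative error bound \eqref{thm42eq} from Theorems \ref{thmconverge1} and \ref{thmconverge11} to obtain a one-step recursion on the tail sum $S_t\doteq \sum_{p=t}^{\infty}\|z^{p+1}-z^p\|_2$. The tail controls the iterate, since $\|z^t-z^*\|_2\leq S_t$ by the triangle inequality and Theorem \ref{thmconverge2}. Summing \eqref{43thm1} from index $t$ instead of $0$ yields $S_t\leq \frac{K_2K_3}{K_1(1-\theta)}(f(z^t)-f(z^*))^{1-\theta}$, while KL plus \eqref{thm42eq} gives $(f(z^t)-f(z^*))^{\theta}\leq K_2K_3(S_{t-1}-S_t)$. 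Raising the first to the power $\theta/(1-\theta)$ and plugging in produces the key recursion
\begin{equation*}
    S_t^{\theta/(1-\theta)}\leq C(S_{t-1}-S_t),
\end{equation*}
valid for some constant $C>0$ and all sufficiently large $t$.

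For case (i), when $\theta\in(0,\tfrac12]$, the exponent $\theta/(1-\theta)\leq 1$. Since $S_t\to 0$ by \eqref{resu2}, eventually $S_t\leq 1$, so $S_t\leq S_t^{\theta/(1-\theta)}\leq C(S_{t-1}-S_t)$. Rearranging yields $S_t\leq \frac{C}{1+C}S_{t-1}$, which is geometric decay with ratio $\xi=\frac{C}{1+C}\in(0,1)$; choosing $\gamma$ to absorb the early iterates gives $\|z^t-z^*\|_2\leq \gamma\xi^t$.

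For case (ii), when $\theta\in(\tfrac12,1)$, set $\alpha=\theta/(1-\theta)>1$, so the recursion reads $S_t^{\alpha}\leq C(S_{t-1}-S_t)$. This is the standard Attouch--Bolte form. I would consider the auxiliary sequence $h_t\doteq S_t^{1-\alpha}$ (with $1-\alpha<0$, so $h_t$ is increasing). The mean value theorem applied to $x\mapsto x^{1-\alpha}$ on $[S_t,S_{t-1}]$ gives $h_t-h_{t-1}\geq(\alpha-1)S_{t-1}^{-\alpha}(S_{t-1}-S_t)$. Splitting into two cases according to whether $S_{t-1}\leq q S_t$ for a fixed $q>1$ (in which case the recursion directly yields $h_t-h_{t-1}\geq c>0$) or $S_{t-1}>qS_t$ (in which case $h_t\geq q^{\alpha-1}h_{t-1}$, and one then works with the ratio), one obtains $h_t\geq h_0+ct$ for some $c>0$ and all large $t$. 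Hence $S_t\leq (ct)^{-1/(\alpha-1)}=(ct)^{-(1-\theta)/(2\theta-1)}$, and enlarging the constant to absorb the initial iterates gives $\|z^t-z^*\|_2\leq \gamma t^{-(1-\theta)/(2\theta-1)}$.

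The bulk of the argument is routine once the recursion is in hand; the main delicate point is case (ii), where the inequality $S_t^\alpha\leq C(S_{t-1}-S_t)$ is not directly telescopable and requires the two-case analysis above to pass from a decreasing sequence controlled by its own power to a sublinear rate. I would also be careful that the KL inequality and the bound $\|\Pi_\Lambda(\nabla f(z^t))\|_F$ apply uniformly along the tail, which is ensured since $z^t\in B(z^*,\tau)\subset Z'$ for all $t$ by Theorem \ref{thmconverge2}.
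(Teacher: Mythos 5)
Your proposal is correct and follows essentially the same route as the paper: both define the tail sum $\Delta_t=\sum_{p\geq t}\|z^{p+1}-z^p\|_2$, combine \eqref{43thm1}, the KL inequality and \eqref{thm42eq} into the single recursion $\Delta_t^{\theta/(1-\theta)}\leq C(\Delta_t-\Delta_{t+1})$, and then read off geometric decay when $\theta\in(0,\tfrac12]$ and the power rate when $\theta\in(\tfrac12,1)$. The only cosmetic differences are an off-by-one in your indexing of the difference term (the KL bound at $z^t$ controls $\|z^t-z^{t+1}\|_2=S_t-S_{t+1}$, not $S_{t-1}-S_t$, which changes nothing) and your use of the mean-value-theorem/dichotomy argument in case (ii), where the paper instead compares $q(\Delta_t)(\Delta_t-\Delta_{t+1})$ with $\int_{\Delta_{t+1}}^{\Delta_t}x^{-\theta/(1-\theta)}\,dx$ for $q(x)=x^{-\theta/(1-\theta)}$ --- two interchangeable standard ways of converting the same recursion into the rate $t^{-(1-\theta)/(2\theta-1)}$.
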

\begin{proof}
	Since $\{ z^t \}$ converges to $z^*$, there exists an index $k_0$ such that $z^{k_0}\in B(z^*,\tau)$, where $\tau$ is given in Theorem \ref{thmconverge2}. Hence, we can regard $z^{k_0}$ as an initial point. Without loss of generality, we set $z^0\in B(z^*,\tau)$. Let
	\begin{equation}\label{44thm1}
		\Delta_{t}\doteq\sum_{p=t}^{\infty}\|z^p-z^{p+1}\|_2\geq\|z^t-z^*\|_2.
	\end{equation}
	It follows from \eqref{43thm1} that
	\begin{align*}
		\Delta_t\leq\frac{K_2}{K_1}\phi\big(f(z^t)\big)=\frac{K_2 K_3}{K_1(1-\theta)}|f(z^t)-f(z^*)|^{1-\theta}=\frac{K_2 K_3}{K_1(1-\theta)}\big(|f(z^t)-f(z^*)|^{\theta}\big)^{\frac{1-\theta}{\theta}}.
	\end{align*}
	Using KL inequality \eqref{klineq}, it holds
	\begin{align*}
		\Delta_t\leq\frac{K_2 K_3}{K_1(1-\theta)}\left( K_3\|\Pi_{\Lambda}\big( \nabla f(z^t) \big)\|_F\right)^{\frac{1-\theta}{\theta}}.
	\end{align*}
	Set $\xi_1=\frac{(K_2 K_3)^{\frac{1}{\theta}}}{K_1(1-\theta)}$, then the above inequality, together with \eqref{thm41eq} and \eqref{thm42eq}, implies
	\begin{equation}\label{44thm2}
		\Delta_t\leq\frac{K_2 K_3}{K_1(1-\theta)}\left( K_2K_3\|z^t-z^{t+1}\|_2\right)^{\frac{1-\theta}{\theta}}=\xi_1(\Delta_t-\Delta_{t+1})^{\frac{1-\theta}{\theta}}.
	\end{equation}
	
We now 	prove (i). If $\theta\in(0,\frac{1}{2}]$, then $\frac{1-\theta}{\theta}\geq 1$.  It holds for sufficiently large $t$,
	\begin{equation*}
		\Delta_{t}\leq\xi_1(\Delta_t-\Delta_{t+1}).
	\end{equation*}
	Hence,
	\begin{equation*}
		\Delta_{t+1}\leq\frac{\xi_1-1}{\xi_1},
	\end{equation*}
	which together with \eqref{44thm1} implies that (i) holds with $\xi=\frac{\xi_1-1}{\xi_1}$.
	
	We next to prove (ii). If $\theta\in(\frac{1}{2},1)$, let $q(x)=x^{-\frac{\theta}{1-\theta}}$. Then, $q(x)$ is monotonically decreasing on $x$. It follows from \eqref{44thm2} that
	\begin{equation*}
		\xi_1^{-\frac{\theta}{1-\theta}}\leq q(\Delta_{t})(\Delta_{t}-\Delta_{t+1})=\int_{\Delta_{t}}^{\Delta_{t+1}}q(\Delta_{t})dx\leq\int_{\Delta_{t}}^{\Delta_{t+1}}q(x)dx=-\frac{1-\theta}{2\theta-1}\left(\Delta_{t}^{-\frac{2\theta-1}{1-\theta}}-\Delta_{t+1}^{-\frac{2\theta-1}{1-\theta}}\right).
	\end{equation*}
Define $\nu\doteq-\frac{2\theta-1}{1-\theta}$. Then $\nu<0$ and hence
	\begin{equation*}
		\Delta_{t+1}^{\nu}-\Delta_{t}^{\nu}\geq-\nu\xi_1^{-\frac{\theta}{1-\theta}}>0.
	\end{equation*}
	Thus, there exists  $\bar{t}$ such that for all $t\geq2\bar{t}$,
	\begin{equation*}
		\Delta_{t}^{\nu}\geq\Delta_{\bar{t}}^{\nu}-\nu\xi_1^{-\frac{\theta}{1-\theta}}(t-\bar{t})\geq-\frac{\nu}{2}\xi_1^{-\frac{\theta}{1-\theta}}t,
	\end{equation*}
	which implies
	\begin{equation*}
		\Delta_{t}\leq\gamma t^{\frac{1}{\nu}}.
	\end{equation*}
Let $\gamma=\left( -\frac{\nu}{2}\xi_1^{-\frac{\theta}{1-\theta}} \right)^{\frac{1}{\nu}}$. Then, the above inequality shows that (ii) holds.
\end{proof}

\subsection{Low multi-gQt-rank tensor completion model}
\noindent
\par
In this subsection, we establish a novel low-rank tensor completion model based on multi-gQt-rank and then present the tensor factorization based solution method.

Different from \eqref{modelcompute}, we replace the loss function in mode-3, i.e., $\frac{1}{2}\|\A\pro\B-\C\|_F^2$, with a weighted sum of the loss functions in three modes, and consider the following model
\begin{equation}\label{sec5model}
    \min_{\C,\A_{u},\B_{w},w\in[3]}g(\C,\A_{1},\B_{1},\A_{2},\B_{2},\A_{3},\B_{3}), ~~\ \text{s.t.}\ P_{\Omega}(\C-\mathcal{M})=0,\ \Re(\C)=0,
\end{equation}
where,
\begin{equation*}
    g(\C,\A_{1},\B_{1},\A_{2},\B_{2},\A_{3},\B_{3})=\sum_{w=1}^{3}\left( \frac{\alpha_{w}}{2}\|\A_{w}*_{\mu}^{w}\B_{w}-\C\|_F^2+\frac{\lambda}{2}(\|\A_{w}\|_F^2+\|\B_{w}\|_F^2) \right)+\sum_{k=1}^{2}\lambda_k\|\C\times_{k}H_{n_k}\|_F^2,
\end{equation*}
and $\alpha_{w}$ ($w=1,2,3$) is the weighted coefficient.

Similar to Subsection \ref{subsecsolving}, we solve \eqref{sec5model} as the following steps. First, update $\C^t$ by
\begin{align}
    &\arg\min_{\C_{\ii},\C_{\jj},\C_{\kk}}\sum_{w=1}^{3} \frac{\alpha_{w}}{2}\|\A_{w}^{t-1}*_{\mu}^{w}\B_{w}^{t-1}-\C\|_F^2+\sum_{k=1}^{2}\lambda_k\|\C\times_{k}H_{n_k}\|_F^2+\delta_{\{ P_{\Omega}(\C-\mathcal{M})=0\} }(\C) \nonumber \\
    =&\arg\min_{\C_{\ii},\C_{\jj},\C_{\kk}}\frac{1}{2}\sum_{w=1}^{3}\alpha_w\|\X^{t-1}-\C\|_F^2+\sum_{k=1}^{2}\lambda_k\|\C\times_{k}H_{n_k}\|_F^2+\delta_{\{ P_{\Omega}(\C-\mathcal{M})=0\} }(\C), \label{sec5updateC}
\end{align}
where $\X^{t-1}=\frac{1}{\alpha_1+\alpha_2+\alpha_3}\sum_{w=1}^{3}\alpha_w\A_{w}^{t-1}*_{\mu}^{w}\B_{w}^{t-1}$. Let
\begin{align}
    f_{2}(\C_{\ii},\C_{\jj},\C_{\kk})&=\sum_{w=1}^{3} \frac{\alpha_{w}}{2}\|\Im(\X^{t-1})-\C\|_F^2+\sum_{k=1}^{2}\lambda_k\|\C\times_{k}H_{n_k}\|_F^2, \nonumber \\
    h_{2}(\C_{\ii},\C_{\jj},\C_{\kk})&=\delta_{\{ P_{\Omega}(\C-\mathcal{M})=0\} }(\C). \nonumber
\end{align}
Similar to Algorithm \ref{alg:2}, we can solve \eqref{sec5updateC} by the following PGM.
\begin{algorithm}
\caption{PGM for \eqref{sec5updateC}}\label{alg:3}
\begin{algorithmic}
\STATE {{\bf Input.} The tensor data $\mathcal{M},\X^{t-1}$, the observed set $\Omega$, initial step size $\alpha_0$, parameters $\lambda,\lambda_i,H_{n_i},1\leq i\leq 3$.}
\STATE {{\bf Step 0.} Initialize $z^0=[\C_{\ii}^0,\C_{\jj}^0,\C_{\kk}^0]$ which satisfies $P_{\Omega}(\C_{\ii}^0\ii+\C_{\jj}^0\jj+\C_{\kk}^0\kk-\mathcal{M})=0$, $z^{-1}=z^0$. Iterate the following steps for $k=0,1,2,\cdots$ while it does not satisfy stop criterion.}
\STATE {{\bf Step 1.} If $k\geq 1$, choose step size $$\alpha_k=\frac{\|z^{k}-z^{k-1}\|_F^2}{\langle z^{k}-z^{k-1}, \nabla f_2(z^{k})-\nabla f_2(z^{k-1})\rangle},\ \text{or } \alpha_k=\frac{\langle z^{k}-z^{k-1}, \nabla f_2(z^{k})-\nabla f_2(z^{k-1})\rangle}{\|\nabla f_2(z^{k})-\nabla f_2(z^{k-1})\|_F^2}.$$}
\STATE {{\bf Step 2.} Update $[\C_{\ii}^{k+1},\C_{\jj}^{k+1},\C_{\kk}^{k+1}]=\prox_{\alpha_kh_2}\big([\C_{\ii}^{k},\C_{\jj}^{k},\C_{\kk}^{k}]-\alpha_k\nabla f_2(\C_{\ii}^{k},\C_{\jj}^{k},\C_{\kk}^{k})\big)$ .}
\STATE {{\bf Output.} $\C_{\ii}^{k+1}\ii+\C_{\jj}^{k+1}\jj+\C_{\kk}^{k+1}\kk$.}
\end{algorithmic}
\end{algorithm}

It follows from Theorem \ref{lemma51} that \eqref{sec5model} is rewritten as
\begin{equation*}
    \min_{\hat{\C},\hat{\A}_w,\hat{\B}_w,w\in[3]}\sum_{w=1}^{3}\sum_{l=1}^{n_w}\left( \frac{\alpha_{w}}{2n_w}\|\hat{\A}_{w}^{(l)}\hat{\B}_{w}^{(l)}-\hat{\C}_{w}^{(l)}\|_F^2+\frac{\lambda}{2n_w}(\|\A_{w}^{(l)}\|_F^2+\|\B_{w}^{(l)}\|_F^2) \right)+\sum_{k=1}^{2}\lambda_k\|\C\times_{k}H_{n_k}\|_F^2.
\end{equation*}
To update $\hat{\A}_w^{t,(l)}$ and $\hat{\B}_w^{t,(l)}$, we consider the following problem
\begin{equation*}
    \min_{\hat{\A}_w^{t,(l)},\hat{\B}_w^{t,(l)}}\frac{\alpha_{w}}{2n_w}\|\hat{\A}_{w}^{(l)}\hat{\B}_{w}^{(l)}-\hat{\C}_{w}^{(l)}\|_F^2+\frac{\lambda}{2n_w}(\|\A_{w}^{(l)}\|_F^2+\|\B_{w}^{(l)}\|_F^2).
\end{equation*}
For $l\in[n_3]$ and $w\in[3]$, update $\hat{\A}_{w}^{(l)}$ by
\begin{align}
    \hat{\A}_{w}^{t,(l)}&=\mathop{\arg\min}_{\hat{\A}_{w}}\frac{\alpha_w}{2}\|\hat{\A}_{w}^{(l)}\hat{\B}_{w}^{t-1,(l)}-\hat{\C}_{w}^{t,(l)}\|_F^2+\frac{\lambda}{2}\|\hat{\A}_{w}^{(l)}\|_F^2+\frac{\beta}{2}\|\hat{\A}_{w}^{(l)}-\hat{\A}_{w}^{t-1,(l)}\|_F^2 \nonumber \\
    &=\big(\alpha_w\hat{\C}_{w}^{t,(l)}(\hat{\B}_{w}^{t-1,(l)})^*+\beta \hat{\A}_{w}^{t-1,(l)}\big)\big(\alpha_w\hat{\B}_{w}^{t-1,(l)}(\hat{\B}_{w}^{t-1,(l)})^*+(\lambda+\beta)I\big)^{-1},\ \label{sec5updatea}
\end{align}
and the updating of $\hat{\B}_{w}^{(l)}$ is given by
\begin{align}
    \hat{\B}_{w}^{t,(l)}&=\mathop{\arg\min}_{\hat{\B}_{w}}\frac{\alpha_w}{2}\|\hat{\A}_{w}^{t,(l)}\hat{\B}_{w}^{(l)}-\hat{\C}_{w}^{t,(l)}\|_F^2+\frac{\lambda}{2}\|\hat{\B}_{w}^{(l)}\|_F^2+\frac{\beta}{2}\|\hat{\B}^{(l)}_{w}-\hat{\B}_{w}^{t-1,(l)}\|_F^2 \nonumber \\
    &=\big(\alpha_w(\hat{\A}_{w}^{t,(l)})^*\hat{\A}_{w}^{t,(l)}+(\lambda+\beta)I\big)^{-1}\big(\alpha_w(\hat{\A}_{w}^{t,(l)})^*\hat{\C}_{w}^{t,(l)}+\beta \hat{\B}_{w}^{t-1,(l)}\big). \label{sec5updateb}
\end{align}
Consequently, we propose the following algorithm  to solve \eqref{sec5model}. The convergence analysis of Algorithm \ref{alg:4} is similar to that of Algorithm \ref{alg:1} and hence we omit it here.

\begin{algorithm}
\caption{Multi gQt-Rank Tensor Completion (MQRTC)}\label{alg:4}
\begin{algorithmic}
\STATE {{\bf Input.} The tensor data $\mathcal{M}\in\Q^{n_1\times n_2\times n_3}$, the observed set $\Omega$, the rank $\textbf{r}\in\mathbb{Z}_{+}^{n_3}$, parameters $\lambda,\lambda_i,H_{n_i},1\leq i\leq 2$ and $\epsilon$.}
\STATE {{\bf Step 0.} Initialize $\hat{\A}_w^0,\hat{\B}_w^0$ with $w\in[3]$ and $\C^0$ satisfying $P_{\Omega}(\C^0-\mathcal{M})=0,\ \Re(\C^0)=0$, and the rank of $\hat{\A}^0,\hat{\B}^0$ are less than $\textbf{r}$. Iterate the following steps for $t=1,2,\cdots$ while it does not satisfy stop criterion.}
\STATE {{\bf Step 1.} Compute
\begin{equation*}
	\C^{t}=\mathop{\arg\min}_{P_{\Omega}(\C-\mathcal{M})=0,\Re(\C)=0}\sum_{w=1}^{3}\frac{\alpha_w}{2}
\|\A_w^{t-1}*_{\mu}^{w}\B_w^{t-1}-\C\|_F^2+\sum_{k=1}^{2}\lambda_k\|\C\times_{k}H_{n_k}\|_F^2+\sum_{\alpha=\ii,\jj,\kk}\langle \delta^t_{\alpha}, \C_{\alpha}\rangle
\end{equation*}
via Algorithm \ref{alg:3}. That is, given parameters of Algorithm \ref{alg:3}, apply PGM to find an approximate solution $\C^t$ of \eqref{sec5updateC} such that the error vector $\delta^t$ satisfies $\Re(\delta^t)=0,P_{\Omega}(\delta^t)=0$ and the accuracy condition
\begin{equation*}
    \|P_{\Omega^c}(\delta^t)\|_F\leq \frac{1}{4}\|\C^t-\C^{t-1}\|_F.
\end{equation*}
}
\STATE {{\bf Step 2.} Compute $\hat{\A}^t_w$ by  \eqref{sec5updatea}.}
\STATE {{\bf Step 3.} Compute $\hat{\B}^t_w$ by  \eqref{sec5updateb}.}
\STATE {{\bf Step 5.} Check the stop criterion: $$\frac{|g(\C^{k},\A^{k}_1,\B^{k}_1,\A^{k}_2,\B^{k}_2,\A^{k}_3,\B^{k}_3)-g(\C^{k-1},\A^{k-1}_1,\B^{k-1}_1,\A^{k-1}_2,\B^{k-1}_2,\A^{k-1}_3,\B^{k-1}_3)|}{|g(\C^{k-1},\A^{k-1}_1,\B^{k-1}_1,\A^{k-1}_2,\B^{k-1}_2,\A^{k-1}_3,\B^{k-1}_3)|}<\epsilon .$$}
\STATE {{\bf Output.} $\C^t$.}
\end{algorithmic}
\end{algorithm}

\section{Numerical Experiments}\label{sec6}
\noindent
\par
In this section, we report some numerical results of our proposed algorithms QRTC and MQRTC to show the validity. Moreover, we compare them with several existing state-of-the-art methods, including TMac \cite{YXu2013}, TNN \cite{ZZhang2014}, TCTF \cite{PZhou2018} and LRQA-2 \cite{MiaoJi2019}. Note that TMac has two versions, i.e., TMac-dec and TMac-inc, and the former uses the rank-decreasing scheme to adjust its rank while the latter employs the rank-increasing scheme. The codes of TMac\footnote{\url{https://xu-yangyang.github.io/codes/TMac.zip}}, TNN\footnote{\url{http://www.ece.tufts.edu/~shuchin/tensor_completion_and_rpca.zip}}, TCTF\footnote{\url{https://panzhous.github.io/assets/code/TCTF_code.rar}} are open source and LRQA-2 is provided by the authors in \cite{MiaoJi2019}.

A color video is a 4-way tensor defined by two indices for spatial variables, one index for temporal variable and one index for color mode. All the videos in our simulations are initially represented by 4-way tensors $\C\in\mathbb{R}^{n_1\times n_2\times n_3\times 3}$, where $n_1\times n_2$ stands for the pixel scale of each frame, and $n_3$ is the number of frames. $\C(:,:,:,1),\C(:,:,:,2)$ and $\C(:,:,:,3)$ correspond to red, green and blue channels, respectively. The index set is $\Omega$ and the sampling ratio $\rho$ is defined by $$\rho=\frac{\text{numel}(\Omega)}{n_1\times n_2\times n_3}.$$

For TCTF, TNN and TMac3D, the three third-order real tensors for red, green and blue channels are first recovered. Then, the three recovered tensors are combined to form the integrated color video data. For TMac4D, we arrange the color video data as a fourth-order tensor and directly recover the incomplete part $P_{\Omega}(\C)$.
Totally there are four TMac-type methods, i.e., TMac3D-dec, TMac3D-inc, TMac4D-dec and TMac4D-inc. For LRQA-2 method, we recover each frame of video by LRQA-2 and finally  combine them into an integrated video tensor. In our two methods, each color video is reshaped as a pure quaternion tensor $\C\in\Q^{n_1\times n_2\times n_3}$ by using the following way:
\begin{equation*}
    \tilde{\C}=\C(:,:,:,1)\ii+\C(:,:,:,2)\jj+\C(:,:,:,3)\kk .
\end{equation*}
All the simulations are run in MATLAB 2020b under Windows 10 on a laptop with 1.30 GHz CPU and 16GB memory.

\subsection{Quantitative assessment and parameter settings}
\noindent
\par
In order to evaluate the performance of QRTC and MQRTC, we employ four quantitative quality indexes, including the relative square error (RSE), the peak signal-to-noise ratio (PSNR), the structure similarity (SSIM) and the feature similarity (FSIM), which are respectively defined as follows:
\begin{equation*}
    \text{RSE}=10\log10\left( \frac{\|\C-\hat{\C}\|_F}{\|\C\|_F} \right),
\end{equation*}
where $\hat{\C}$ and $\C$ are the recovered and truth data, respectively.
\begin{equation*}
    \text{PSNR}=10\log10\left( \frac{\text{numel}(\hat{\C})\text{Peakval}^2}{\|\hat{\C}-\C\|_F^2} \right),
\end{equation*}
where Peakval is taken from the range of the image datatype (e.g., for uint8 image, it is 255).
\begin{equation*}
    \text{SSIM}= \frac{(2\mu_{\C}\mu_{\hat{\C}}+C_1)(2\sigma_{\C\hat{\C}}+C_2)}{(\mu_{\C}^2+\mu_{\hat{\C}}^2+C_1)(\sigma_{\C}^2+\sigma_{\hat{\C}}^2+C_2)} ,
\end{equation*}
where $\mu_{\C},\mu_{\hat{\C}},\sigma_{\C},\sigma_{\hat{\C}}$ and $\sigma_{\C\hat{\C}}$ are the local means, standard deviations, and cross-covariance for video $\C$ and $\hat{\C}$, $C_1=(0.01L)^2$, $C_2=(0.03L)^2$, $L$ is the specified dynamic range of the pixel values.
\begin{equation*}
    \text{FSIM}=\frac{\sum_{x\in\Delta}S_{L}(x)PC_{m}(x)}{\sum_{x\in\Delta}PC_{m}(x)},
\end{equation*}
where $\Delta$ demotes the whole video spatial and temporal domain. The phase congruency for position $x$ of video $\C$ is denoted as $PC_{\C(x)}$, then $PC_{m}(x)=\max\{ PC_{\C(x)},PC_{\hat{\C}(x)} \}$, $S_{L}(x)$ is the gradient magnitude for position $x$.

Without special instructions, in the all experiments in Section \ref{sec6}, we set the initialized rank $\textbf{r}^{0}=[30,30,30]$ in TMac3D-dec, $\textbf{r}^{0}=[30,30,30,30]$ in TMac4D-dec, $\textbf{r}^{0}=[3,3,3]$ in TMac3D-inc and $\textbf{r}^{0}=[3,3,3,3]$ in TMac4D-inc, and set the weights for both versions as suggested in \cite{YXu2013}. For TCTF, we set the initialized rank $\textbf{r}^{0}=[30,\dots,30]\in\mathbb{R}^{n_3}$ the same as that in \cite{PZhou2018}. Following \cite{MiaoJi2019}, we use LRQA with Laplace function penalty, and set parameter $\gamma=20$. For all the methods except ours, the stopping criteria are built-in their codes. In our methods, the initial rank $\textbf{r}^0=[30,\dots,30]\in\mathbb{R}^{n_3}$. An then we will give the same setting of weight parameter $\alpha_1,\alpha_2,\alpha_3$, the penalty parameter $\lambda$ and TV penalty parameter $\lambda_1,\lambda_2$ in both QRTC and MQRTC. Noticing that two spatial dimension are symmetry, so we can naturally set $\alpha_1=\alpha_2$ and $\lambda_1=\lambda_2$. We Set $\alpha_3$ as cardinality 1, and then as a matter of experience, we set $\alpha_1=\alpha_2=10$, $\lambda_1=\lambda_2=5$ and $\lambda=\alpha_1+\alpha_2+\alpha_3$. In experiments, the maximum iteration number is set to be 20 and the termination precision $\epsilon$ is set to be 1e-3.

\subsection{Performances of methods based on Qt-SVD and t-SVD}\label{Compare}
\noindent
\par
Both t-SVD \cite{tproduct} and our novel factorization Qt-SVD depict the inherent low rank structure of a third order real or quaternion tensor. Here we conduct experiments to compare them in detail on real color video data. Other methods are not compared here since they are not based on matrix factorization of a Fourier transform result. In order to show that Qt-SVD explores the low rank property better of color video, we fairly compare TCTF and our method in the similar formulation. Notice that the model of TCTF is given as \eqref{sec1model5}, we set parameters $\lambda=\lambda_1=\lambda_2=0$ in QRTC \eqref{modelcompute} to get the similar formulation {\bf QRTC-1}, i.e.
\begin{align}\label{sec6model1}
    \min_{\C, \A, \B}\frac{1}{2}\|\A\pro\B-\C\|_F^2,\ \text{s.t.}\ P_{\Omega}(\C-\mathcal{M})=0,\ \Re(\C)=0.
\end{align}

We test TCTF and QRTC-1 on fifteen real color videos data of YUV Video Sequences. The frame size of each video is 288 $\times$ 352, and only the first 30 frames of each video are extracted as experimental data due to the computational limitation. The initialized rank of TCTF and QRTC-1 are set as $\textbf{r}^0=[30,\dots,30]\in\mathbb{R}^{n_3}$.
\begin{figure}[H]
    \centering
    \subfigure[$\rho=0.1$]{
		\begin{minipage}[t]{0.48\linewidth}
			\centering
			\includegraphics[width=1\linewidth]{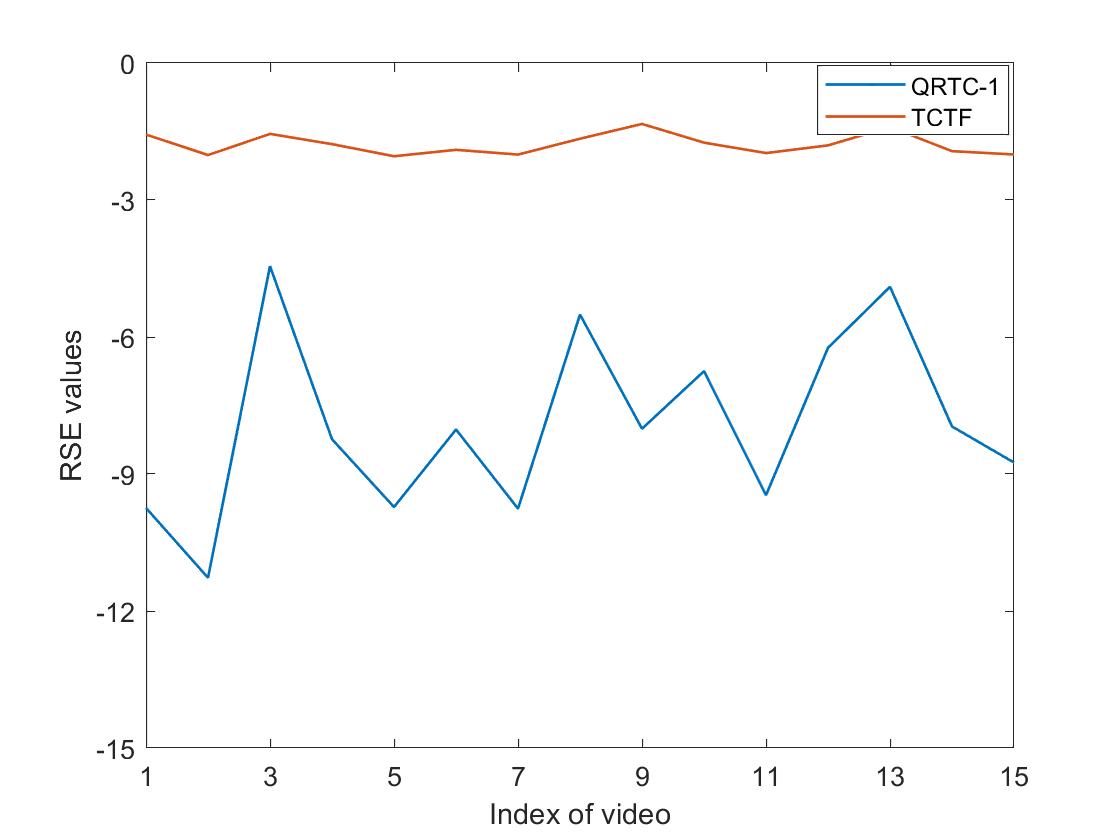}
		\end{minipage}
	}
	\subfigure[$\rho=0.3$]{
		\begin{minipage}[t]{0.48\linewidth}
			\centering
			\includegraphics[width=1\linewidth]{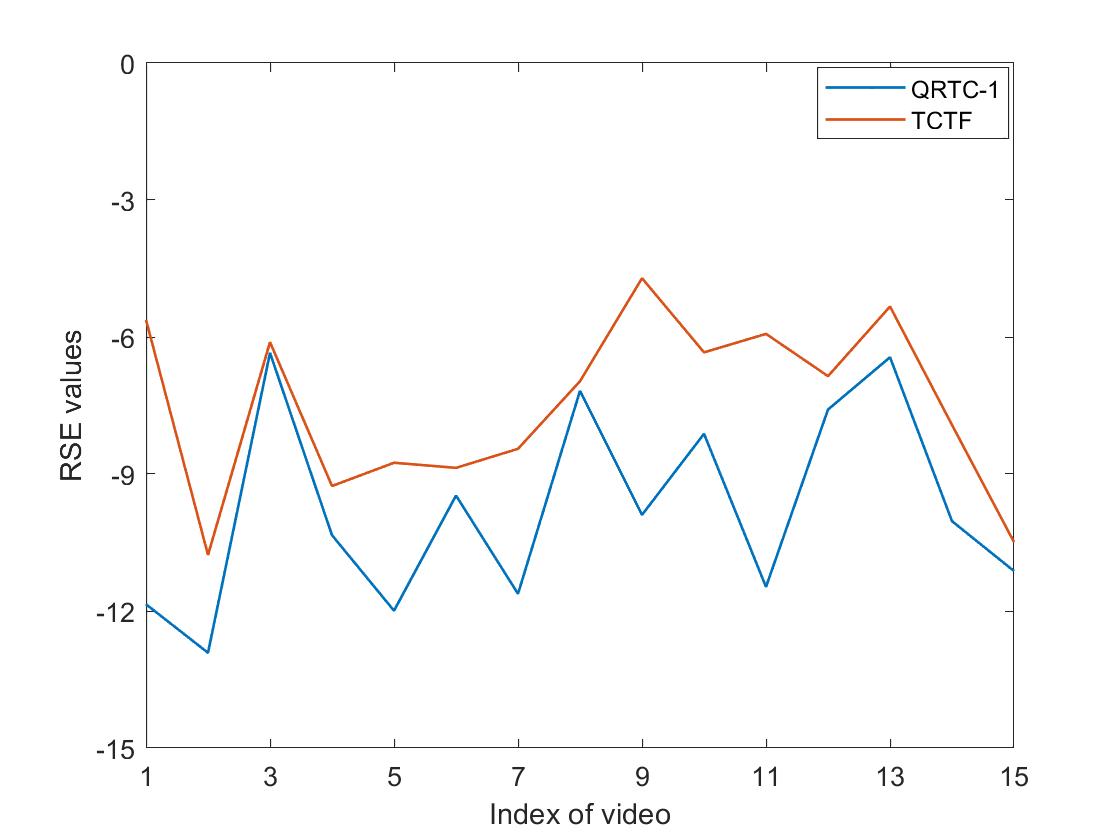}
		\end{minipage}
	}
	\qquad
    \subfigure[$\rho=0.5$]{
		\begin{minipage}[t]{0.48\linewidth}
			\centering
			\includegraphics[width=1\linewidth]{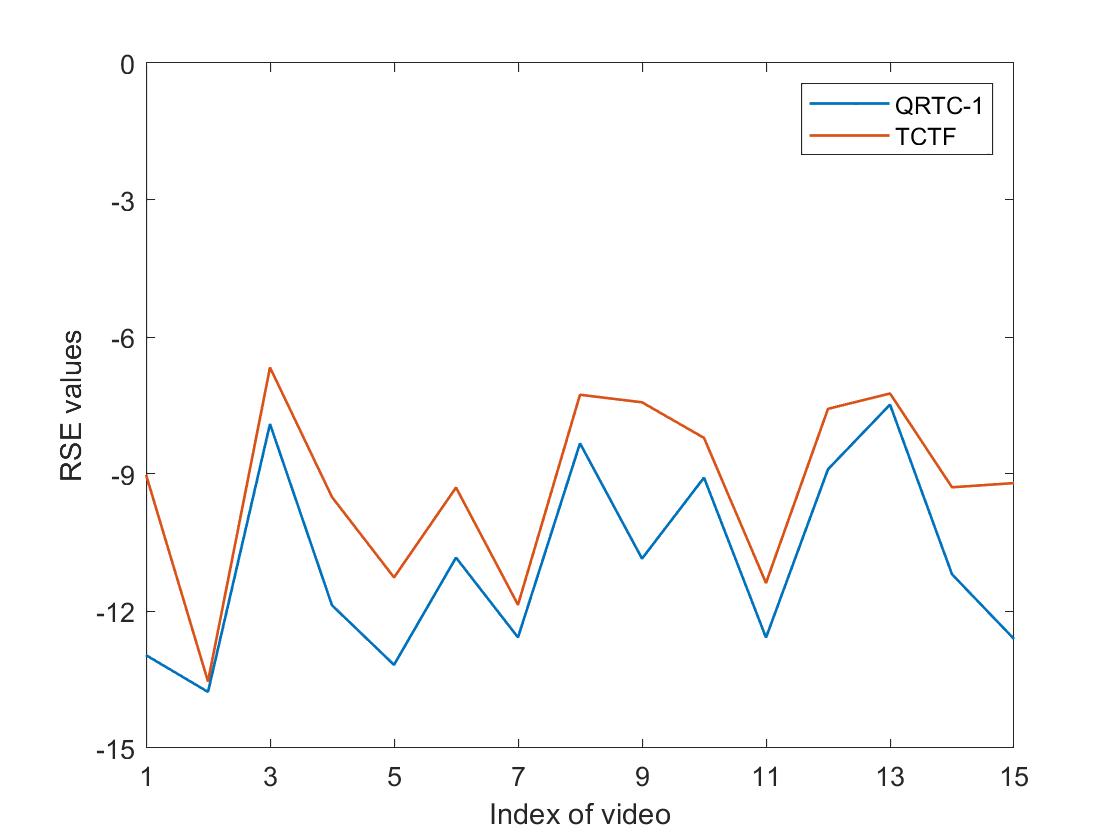}
		\end{minipage}
	}
	\subfigure[$\rho=0.7$]{
		\begin{minipage}[t]{0.48\linewidth}
			\centering
			\includegraphics[width=1\linewidth]{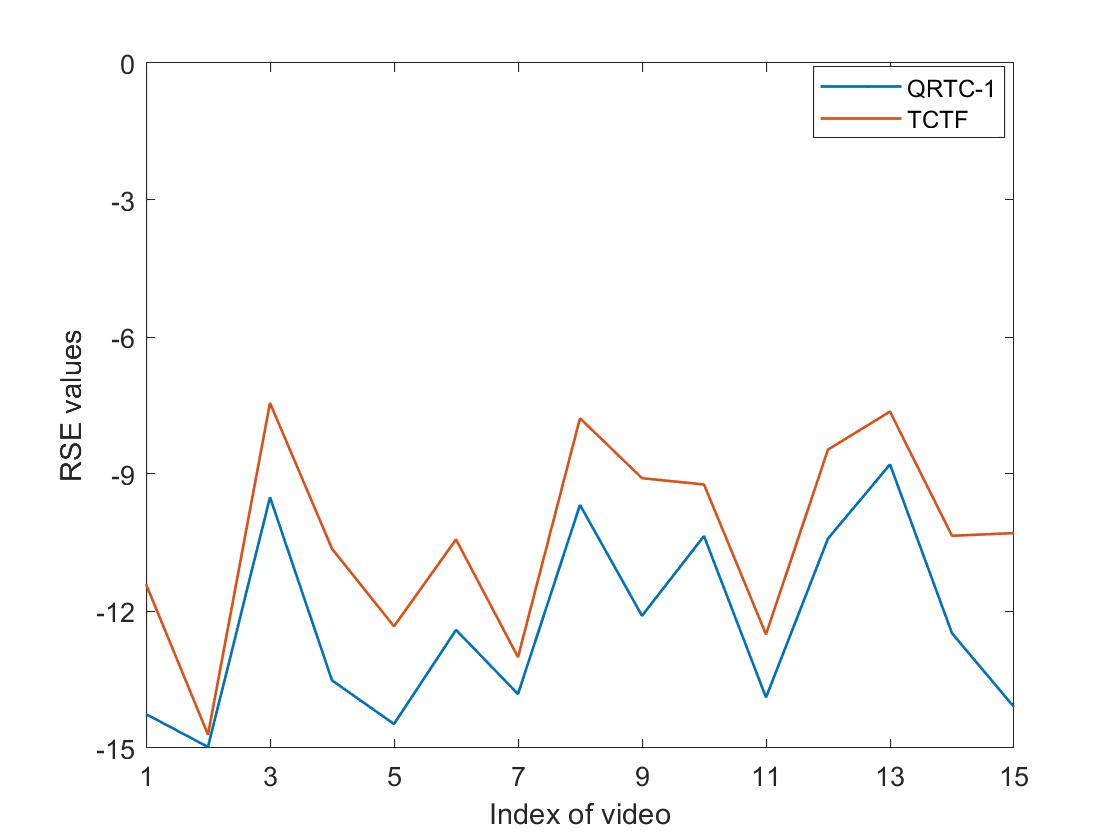}
		\end{minipage}
	}
    \caption{{\small Comparison of RSE result of TCTF and QRTC-1 for color video recovery on 15 videos, sample ratio $\rho=0.1,\ 0.3,\ 0.5$ and $0.7$. From RSE, our Qt-SVD based QRTC outperforms over t-SVD based TCTF. Moreover, the smaller of sample size, the performance of QRTC-1 is better from (a).} }
    \label{fig:compare}
\end{figure}

As shown in Figure \ref{fig:compare}, we display the RSE values of the recovery of fifteen video data with four sample ratios, $\rho=0.1,\ 0.3,\ 0.5$ and $0.7$, respectively. We can see that the RSE values of our Qt-SVD based QRTC-1 are always less than of TCTF with all sample ratios. This shows the better performance and robustness of methods based on Qt-SVD. And when sample ratio $\rho=0.1$, our method has a greater RSE, which shows QRTC-1 has better performance.

Table \ref{tab:compare} displays the PSNR values of recovery of fifteen videos data with four sample ratios, $\rho=0.1,\ 0.3,\ 0.5$ and $0.7$, respectively. The bold values in Table \ref{tab:compare} is the best values of two methods. It is shown in Table \ref{tab:compare} that our method always achieves the best. Especially with $\rho=0.1$, the average PSNR values of our method is two times better than of TCTF, this shows our method has a greater development on exploring the low rank structure of color video data. Table \ref{tab:comparetime} shows the average running time of two methods. We can see that the running time of QRTC-1 not longer than an order of magnitude with TCTF, which is an acceptable cost in practice.

\begin{table}[ht]
    \centering
    \begin{tabular}{|c|c|c|c|c|c|c|c|c|}
    \toprule
    \multirow{2}{*}{\makecell*[c]{Index}} & \multicolumn{2}{c|}{$\rho=0.1$} & \multicolumn{2}{c|}{$\rho=0.3$} & \multicolumn{2}{c|}{$\rho=0.5$} & \multicolumn{2}{c|}{$\rho=0.7$} \\  \cline{2-9}
	& TCTF & QRTC-1 & TCTF & QRTC-1 & TCTF & QRTC-1 & TCTF & QRTC-1 \\ \bottomrule
	\hline
	Akiyo &10.0340 &\textbf{26.3871} & 18.1500 &\textbf{30.6066} & 24.9302 &\textbf{32.8357} & 29.7239 &\textbf{35.4168} \\ \hline
	Bridge(Close)&6.6788 &\textbf{25.1930} & 24.1929 &\textbf{28.4788} & 29.7362 &\textbf{30.1917} &32.0634 &\textbf{32.6023} \\ \hline
	Bus& 11.6938 &\textbf{17.4861} & 20.8127 &\textbf{21.2821} & 21.9240 &\textbf{24.3991} &23.4783 &\textbf{27.6053} \\ \hline
	Coastguard & 9.2534 &\textbf{22.1785} & 24.2265 &\textbf{26.3815} & 24.7123 &\textbf{29.4515} &26.9737 &\textbf{32.7388} \\ \hline
	Container & 8.1161 &\textbf{23.4795} & 21.5396 &\textbf{28.0197} & 26.5626 &\textbf{30.3911} &28.7022 &\textbf{32.9819} \\ \hline
	Flower & 5.3197 &\textbf{17.5676} & 19.2486 &\textbf{20.4618} & 20.1102 &\textbf{23.1790} &22.3841 &\textbf{26.3466} \\ \hline
	Hall Monitor & 8.6508 &\textbf{24.1589} & 21.5368 &\textbf{27.8881} & 28.3716 &\textbf{29.7974} &30.6626 &\textbf{32.2779} \\ \hline
	Mobile & 7.5761 &\textbf{15.2794} & 18.1969 &\textbf{18.6211} & 18.7906 &\textbf{20.9175} &19.8210 &\textbf{23.6180} \\ \hline
	News & 11.1349 &\textbf{24.4833} & 17.8918 &\textbf{28.2573} & 23.3263 &\textbf{30.1733} &26.6495 &\textbf{326758} \\ \hline
	Paris & 10.1639 &\textbf{20.1727} & 19.3490 &\textbf{22.9161} & 23.0965 &\textbf{24.8344} &25.1367 &\textbf{27.3943} \\ \hline
	Silent & 9.3471 &\textbf{24.3320} & 17.2662 &\textbf{28.3484} & 28.1783 &\textbf{30.5686} &30.4278 &\textbf{33.1890} \\ \hline
	Stefan & 8.6749 &\textbf{17.5420} & 18.7819 &\textbf{20.2443} & 20.2135 &\textbf{22.8605} &21.9987 &\textbf{25.8947} \\ \hline
	Tempete & 12.3150 &\textbf{19.3154} & 20.1789 &\textbf{22.4004} & 23.9860 &\textbf{24.4744} &24.7784 &\textbf{27.0957} \\ \hline
	Waterfall & 11.9192 &\textbf{23.9763} & 23.9104 &\textbf{28.1241} & 26.6389 &\textbf{30.4502} &28.7643 &\textbf{30.0320} \\ \hline
	Foreman & 7.1351 &\textbf{20.6288} & 24.1126 &\textbf{25.3807} & 21.5267 &\textbf{28.3540} &23.7150 &\textbf{31.3344} \\ \hline
	\textbf{Average} & 9.2009 &\textbf{21.4787} & 20.6263 &\textbf{25.1607} & 24.1403 &\textbf{27.5252} &26.3520 &\textbf{30.2802} \\ \hline
	
    \end{tabular}
    \caption{{\small Comparison of PSNR result of TCTF and QRTC-1 for color video recovery on 15 videos, sample ratio $\rho=0.1,\ 0.3,\ 0.5,\ 0.7$.}}
    \label{tab:compare}
\end{table}

\begin{table}[ht]
    \centering
    \begin{tabular}{|c|c|c|c|c|c|c|c|}
    \toprule
         \multicolumn{2}{|c|}{$\rho=0.1$} & \multicolumn{2}{c|}{$\rho=0.3$} & \multicolumn{2}{c|}{$\rho=0.5$} & \multicolumn{2}{c|}{$\rho=0.7$} \\  \hline
	TCTF & QRTC-1 & TCTF & QRTC-1 & TCTF & QRTC-1 & TCTF & QRTC-1 \\ \bottomrule
	\hline
	206&219&166&214&169 & 226 &168 & 240 \\ \hline
    \end{tabular}
    \caption{{\small Average running time (seconds) of TCTF and QRTC-1 for color video recovery on 15 videos, sample ratio $\rho=0.1,\ 0.3,\ 0.5,\ 0.7$.}}
    \label{tab:comparetime}
\end{table}

\subsection{Video inpainting for different methods}
\noindent
\par
We first evaluate our method on the {\sl videoSegmentationData} dataset, which can be downloaded in \cite{KFukuchi2009}. We test all the above mentioned methods on color video datasets  ``AN119T", ``BR128T", ``DO01-013", ``DO01-030" and ``M07-058". The frame size of all videos is 288 $\times$ 352. We set the sampling ratio $\rho=0.3$ and uniformly sample the videos to construct the observable index set $\Omega$. All parameters are set as mentioned above. The first frames of five examples of selected videos are shown in Figure \ref{fig:differentvideo}. From Figure \ref{fig:differentvideo}, it is seen that in all videos our two methods have better recovery of derails  on the marginal area between the main target and the surrounding environment. Table \ref{table: differentvideo} summaries the RSE, PSNR, SSIM, FSIM values and the running time of all the algorithms on the five testing videos displayed in Figure \ref{fig:differentvideo}. In Table \ref{table: differentvideo}, the bold values and the values in brackets stands for the best and the second best values of items RSE, PSNR, SSIM and FSIM, respectively. From the results, it is shown that the overall performance of QRTC and MQRTC are vastly superior to the others: the best of the above four quality assessments is consistently of QRTC or MQRTC, while MQRTC behaves  better than MARTC in most of cases.
The running time of QRTC is longer than several methods of TMac, but not longer than an order of magnitude. The running time of MQRTC is approximately fourfold as long as that of QRTC, but no more  than that of TNN. All these outcomes demonstrate that in terms of color video inpainting problems, our methods have better recovery accuracy than others and runs also very efficiently.
\begin{figure}[htbp]
	\centering
	
	\subfigure{
		
        \rotatebox{90}{\bf{\scriptsize{Original frame}}}
		\begin{minipage}[t]{0.144\linewidth}
			\centering
			\includegraphics[width=1\linewidth]{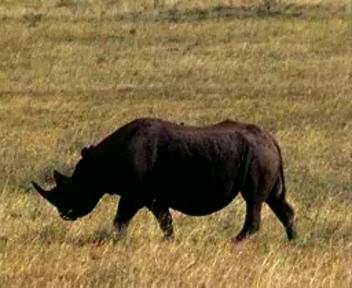}
		\end{minipage}
	}
	\subfigure{
		\begin{minipage}[t]{0.144\linewidth}
			\centering
			\includegraphics[width=1\linewidth]{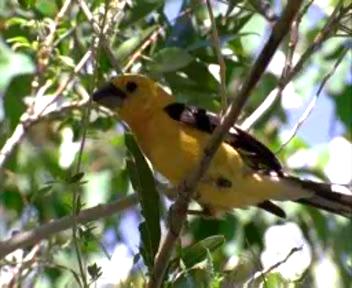}
		\end{minipage}
	}
	\subfigure{
		\begin{minipage}[t]{0.144\linewidth}
			\centering
			\includegraphics[width=1\linewidth]{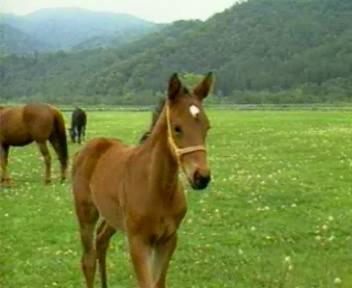}
		\end{minipage}
	}
	\subfigure{
		\begin{minipage}[t]{0.144\linewidth}
			\centering
			\includegraphics[width=1\linewidth]{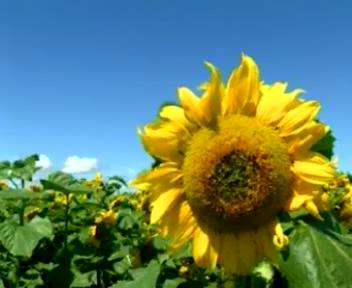}
		\end{minipage}
	}
	\subfigure{
		\begin{minipage}[t]{0.144\linewidth}
			\centering
			\includegraphics[width=1\linewidth]{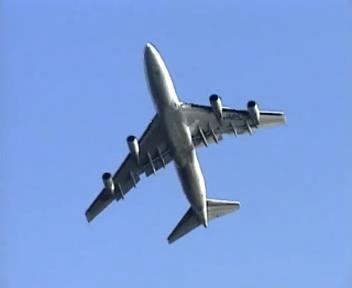}
		\end{minipage}
	}

	\vspace{-3mm}
	\setcounter{subfigure}{0}
	\subfigure{
		
        \rotatebox{90}{\bf{\scriptsize{~Observation}}}
		\begin{minipage}[t]{0.144\linewidth}
			\centering
			\includegraphics[width=1\linewidth]{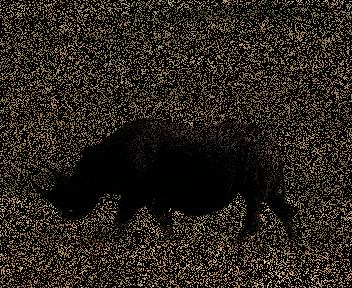}
		\end{minipage}
	}
	\subfigure{
		\begin{minipage}[t]{0.144\linewidth}
			\centering
			\includegraphics[width=1\linewidth]{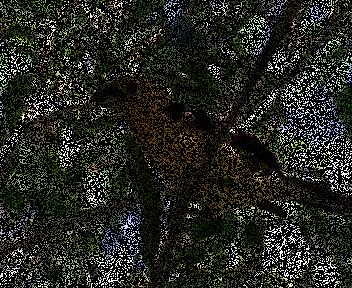}
		\end{minipage}
	}
	\subfigure{
		\begin{minipage}[t]{0.144\linewidth}
			\centering
			\includegraphics[width=1\linewidth]{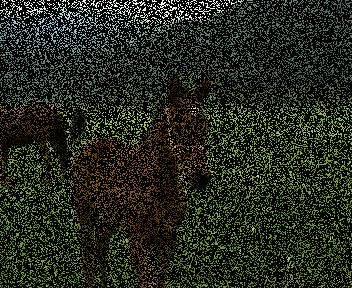}
		\end{minipage}
	}
	\subfigure{
		\begin{minipage}[t]{0.144\linewidth}
			\centering
			\includegraphics[width=1\linewidth]{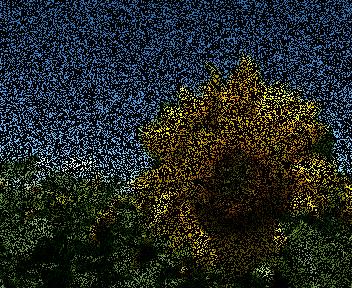}
		\end{minipage}
	}
	\subfigure{
		\begin{minipage}[t]{0.144\linewidth}
			\centering
			\includegraphics[width=1\linewidth]{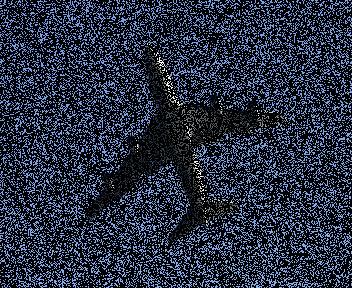}
		\end{minipage}
	}

	\vspace{-3mm}
	\setcounter{subfigure}{0}
	\subfigure{
		
        \rotatebox{90}{\bf{\scriptsize{~~~~TCTF}}}
		\begin{minipage}[t]{0.144\linewidth}
			\centering
			\includegraphics[width=1\linewidth]{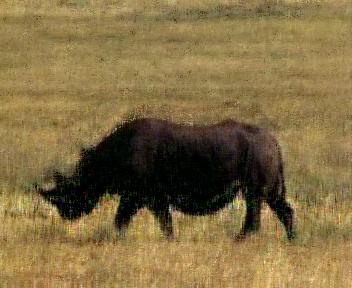}
		\end{minipage}
	}
	\subfigure{
		\begin{minipage}[t]{0.144\linewidth}
			\centering
			\includegraphics[width=1\linewidth]{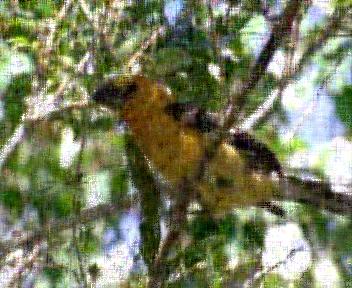}
		\end{minipage}
	}
	\subfigure{
		\begin{minipage}[t]{0.144\linewidth}
			\centering
			\includegraphics[width=1\linewidth]{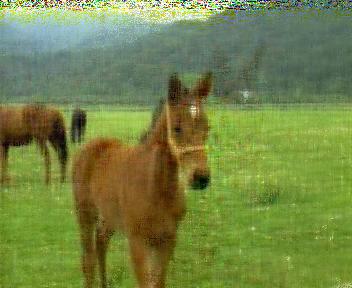}
		\end{minipage}
	}
	\subfigure{
		\begin{minipage}[t]{0.144\linewidth}
			\centering
			\includegraphics[width=1\linewidth]{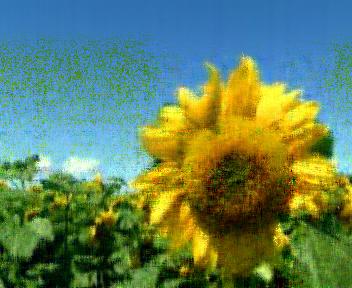}
		\end{minipage}
	}
	\subfigure{
		\begin{minipage}[t]{0.144\linewidth}
			\centering
			\includegraphics[width=1\linewidth]{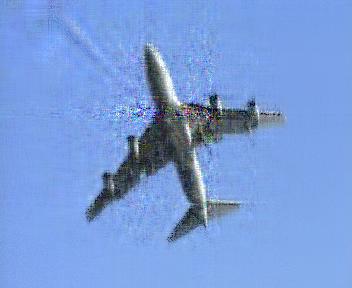}
		\end{minipage}
	}
	
	\vspace{-3mm}
	\setcounter{subfigure}{0}
	\subfigure{
		
        \rotatebox{90}{\bf{\scriptsize{~~~~~TNN}}}
		\begin{minipage}[t]{0.144\linewidth}
			\centering
			\includegraphics[width=1\linewidth]{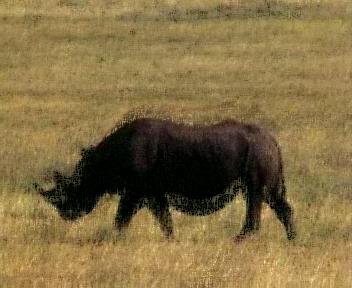}
		\end{minipage}
	}
	\subfigure{
		\begin{minipage}[t]{0.144\linewidth}
			\centering
			\includegraphics[width=1\linewidth]{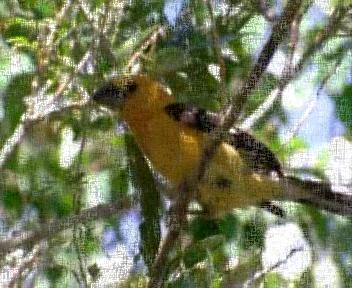}
		\end{minipage}
	}
	\subfigure{
		\begin{minipage}[t]{0.144\linewidth}
			\centering
			\includegraphics[width=1\linewidth]{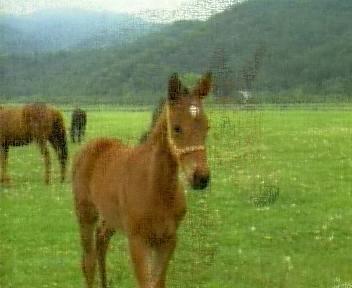}
		\end{minipage}
	}
	\subfigure{
		\begin{minipage}[t]{0.144\linewidth}
			\centering
			\includegraphics[width=1\linewidth]{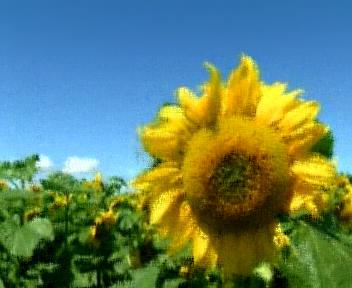}
		\end{minipage}
	}
	\subfigure{
		\begin{minipage}[t]{0.144\linewidth}
			\centering
			\includegraphics[width=1\linewidth]{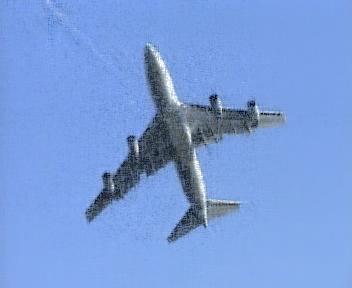}
		\end{minipage}
	}
	
	\vspace{-3mm}
	\setcounter{subfigure}{0}
	\subfigure{
		
        \rotatebox{90}{\bf{\scriptsize{~TMac3D-dec}}}
		\begin{minipage}[t]{0.144\linewidth}
			\centering
			\includegraphics[width=1\linewidth]{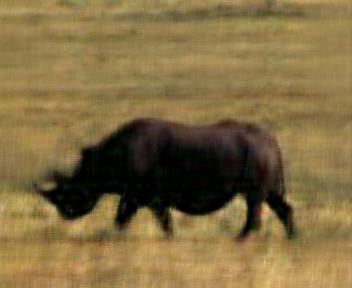}
		\end{minipage}
	}
	\subfigure{
		\begin{minipage}[t]{0.144\linewidth}
			\centering
			\includegraphics[width=1\linewidth]{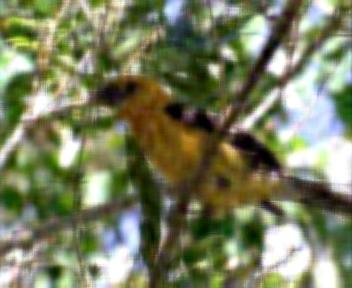}
		\end{minipage}
	}
	\subfigure{
		\begin{minipage}[t]{0.144\linewidth}
			\centering
			\includegraphics[width=1\linewidth]{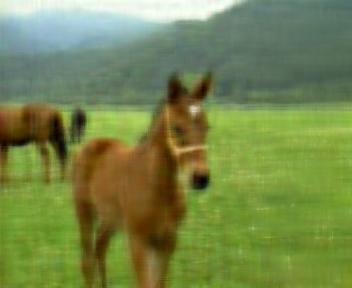}
		\end{minipage}
	}
	\subfigure{
		\begin{minipage}[t]{0.144\linewidth}
			\centering
			\includegraphics[width=1\linewidth]{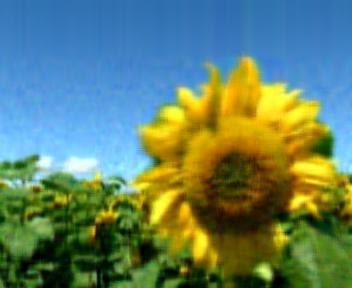}
		\end{minipage}
	}
	\subfigure{
		\begin{minipage}[t]{0.144\linewidth}
			\centering
			\includegraphics[width=1\linewidth]{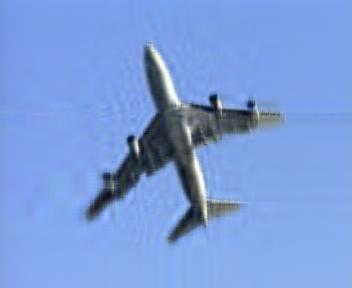}
		\end{minipage}
	}
	
	\vspace{-3mm}
	\setcounter{subfigure}{0}
	\subfigure{
		
        \rotatebox{90}{\bf{\scriptsize{~TMac3D-inc}}}
		\begin{minipage}[t]{0.144\linewidth}
			\centering
			\includegraphics[width=1\linewidth]{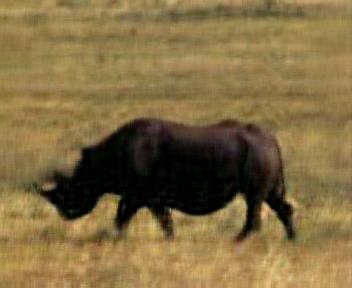}
		\end{minipage}
	}
	\subfigure{
		\begin{minipage}[t]{0.144\linewidth}
			\centering
			\includegraphics[width=1\linewidth]{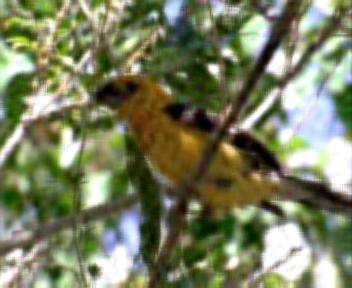}
		\end{minipage}
	}
	\subfigure{
		\begin{minipage}[t]{0.144\linewidth}
			\centering
			\includegraphics[width=1\linewidth]{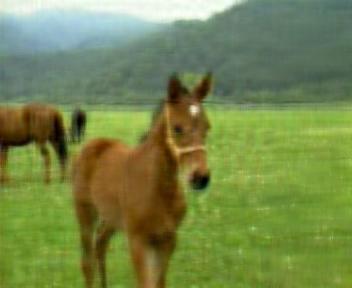}
		\end{minipage}
	}
	\subfigure{
		\begin{minipage}[t]{0.144\linewidth}
			\centering
			\includegraphics[width=1\linewidth]{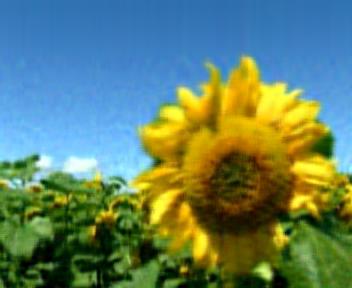}
		\end{minipage}
	}
	\subfigure{
		\begin{minipage}[t]{0.144\linewidth}
			\centering
			\includegraphics[width=1\linewidth]{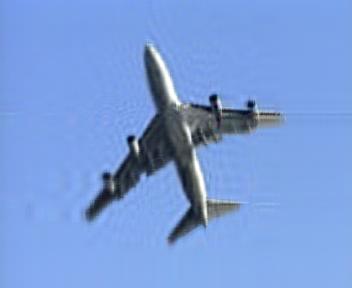}
		\end{minipage}
	}
	
	\vspace{-3mm}
	\setcounter{subfigure}{0}
	\subfigure{
		
        \rotatebox{90}{\bf{\scriptsize{~TMac4D-dec}}}
        \begin{minipage}[t]{0.144\linewidth}
			\centering
			\includegraphics[width=1\linewidth]{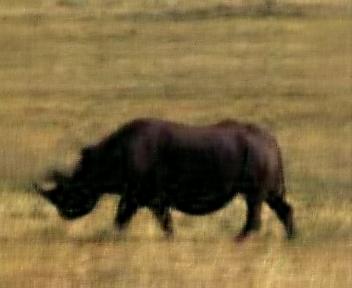}
		\end{minipage}
	}
	\subfigure{
		\begin{minipage}[t]{0.144\linewidth}
			\centering
			\includegraphics[width=1\linewidth]{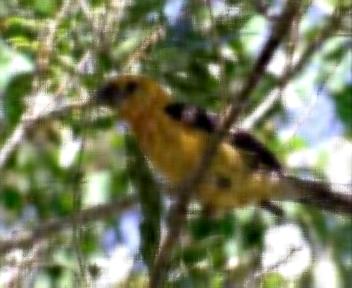}
		\end{minipage}
	}
	\subfigure{
		\begin{minipage}[t]{0.144\linewidth}
			\centering
			\includegraphics[width=1\linewidth]{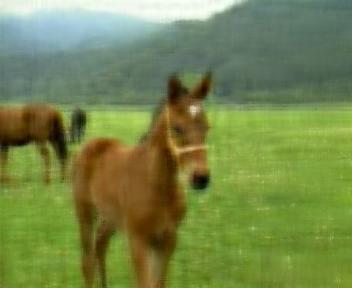}
		\end{minipage}
	}
	\subfigure{
		\begin{minipage}[t]{0.144\linewidth}
			\centering
			\includegraphics[width=1\linewidth]{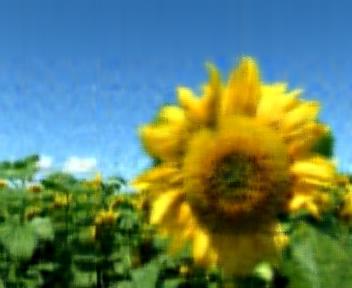}
		\end{minipage}
	}
	\subfigure{
		\begin{minipage}[t]{0.144\linewidth}
			\centering
			\includegraphics[width=1\linewidth]{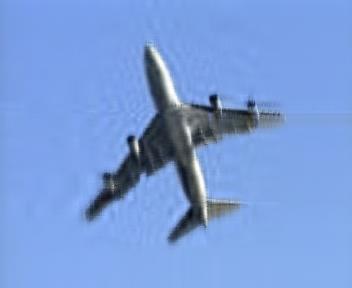}
		\end{minipage}
	}

	\vspace{-3mm}
	\setcounter{subfigure}{0}
	\subfigure{
		
        \rotatebox{90}{\bf{\scriptsize{~TMac4D-inc}}}
		\begin{minipage}[t]{0.144\linewidth}
			\centering
			\includegraphics[width=1\linewidth]{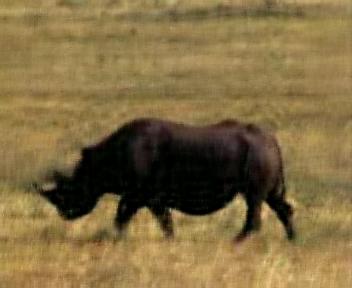}
		\end{minipage}
	}
	\subfigure{
		\begin{minipage}[t]{0.144\linewidth}
			\centering
			\includegraphics[width=1\linewidth]{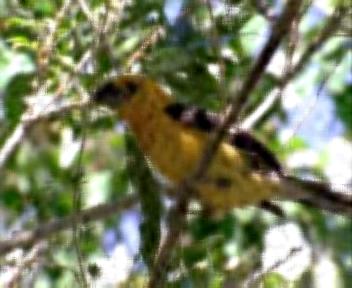}
		\end{minipage}
	}
	\subfigure{
		\begin{minipage}[t]{0.144\linewidth}
			\centering
			\includegraphics[width=1\linewidth]{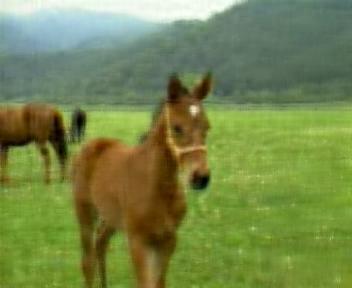}
		\end{minipage}
	}
	\subfigure{
		\begin{minipage}[t]{0.144\linewidth}
			\centering
			\includegraphics[width=1\linewidth]{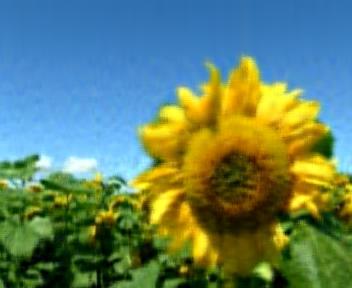}
		\end{minipage}
	}
	\subfigure{
		\begin{minipage}[t]{0.144\linewidth}
			\centering
			\includegraphics[width=1\linewidth]{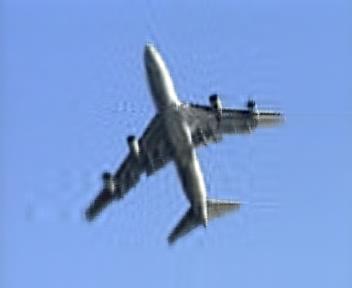}
		\end{minipage}
	}
	
	\vspace{-3mm}
	\setcounter{subfigure}{0}
	\subfigure{
		
        \rotatebox{90}{\bf{\scriptsize{~~~~LRQA-2}}}
		\begin{minipage}[t]{0.144\linewidth}
			\centering
			\includegraphics[width=1\linewidth]{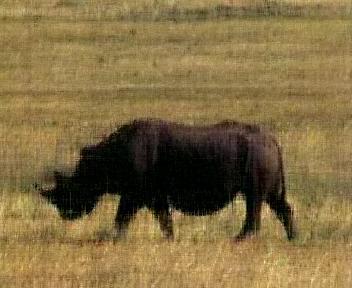}
		\end{minipage}
	}
	\subfigure{
		\begin{minipage}[t]{0.144\linewidth}
			\centering
			\includegraphics[width=1\linewidth]{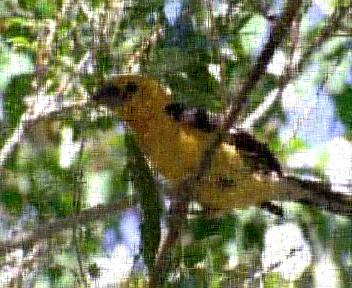}
		\end{minipage}
	}
	\subfigure{
		\begin{minipage}[t]{0.144\linewidth}
			\centering
			\includegraphics[width=1\linewidth]{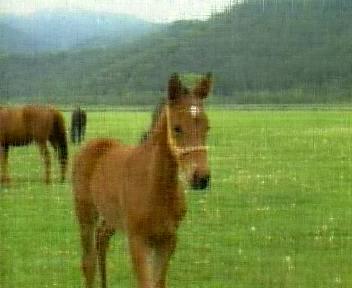}
		\end{minipage}
	}
	\subfigure{
		\begin{minipage}[t]{0.144\linewidth}
			\centering
			\includegraphics[width=1\linewidth]{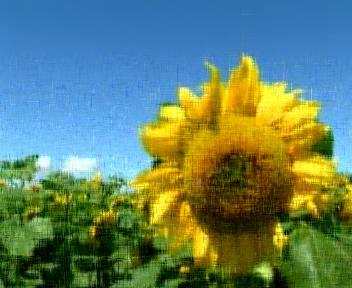}
		\end{minipage}
	}
	\subfigure{
		\begin{minipage}[t]{0.144\linewidth}
			\centering
			\includegraphics[width=1\linewidth]{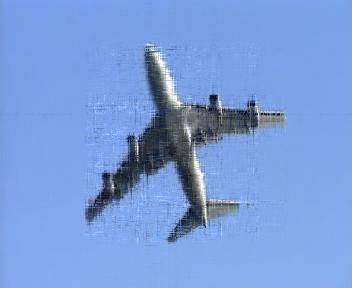}
		\end{minipage}
	}
	
	\vspace{-3mm}
	\setcounter{subfigure}{0}
	\subfigure{
		
        \rotatebox{90}{\bf{\scriptsize{~~~~QRTC}}}
		\begin{minipage}[t]{0.144\linewidth}
			\centering
			\includegraphics[width=1\linewidth]{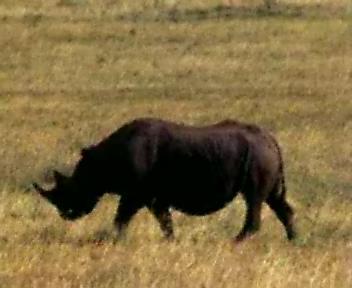}
		\end{minipage}
	}
	\subfigure{
		\begin{minipage}[t]{0.144\linewidth}
			\centering
			\includegraphics[width=1\linewidth]{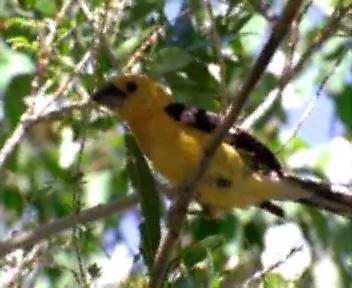}
		\end{minipage}
	}
	\subfigure{
		\begin{minipage}[t]{0.144\linewidth}
			\centering
			\includegraphics[width=1\linewidth]{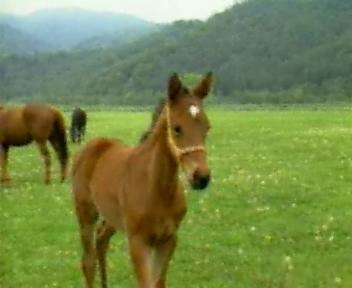}
		\end{minipage}
	}
	\subfigure{
		\begin{minipage}[t]{0.144\linewidth}
			\centering
			\includegraphics[width=1\linewidth]{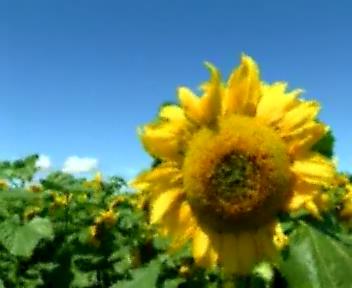}
		\end{minipage}
	}
	\subfigure{
		\begin{minipage}[t]{0.144\linewidth}
			\centering
			\includegraphics[width=1\linewidth]{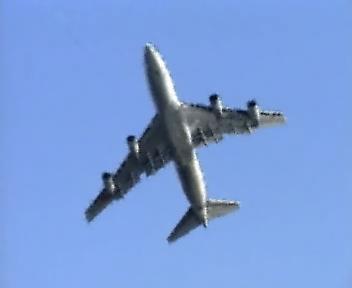}
		\end{minipage}
	}
	
	\vspace{-3mm}
	\setcounter{subfigure}{0}
	\subfigure[AN119T]{
		
        \rotatebox{90}{\bf{\scriptsize{~~~MQRTC}}}
		\begin{minipage}[t]{0.144\linewidth}
			\centering
			\includegraphics[width=1\linewidth]{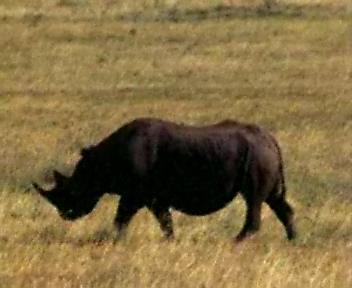}
		\end{minipage}
	}
	\subfigure[BR128T]{
		\begin{minipage}[t]{0.144\linewidth}
			\centering
			\includegraphics[width=1\linewidth]{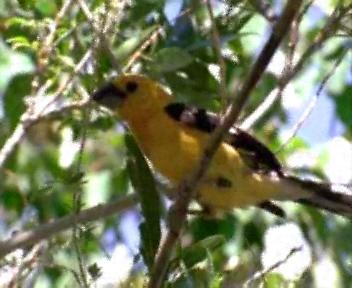}
		\end{minipage}
	}
	\subfigure[DO01-013]{
		\begin{minipage}[t]{0.144\linewidth}
			\centering
			\includegraphics[width=1\linewidth]{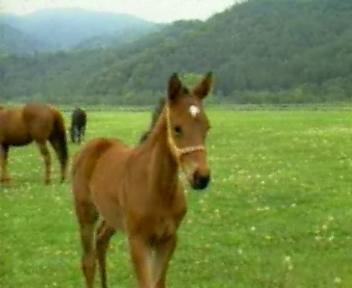}
		\end{minipage}
	}
	\subfigure[DO01-030]{
		\begin{minipage}[t]{0.144\linewidth}
			\centering
			\includegraphics[width=1\linewidth]{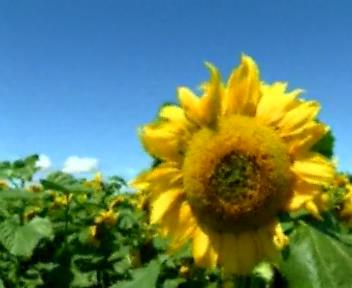}
		\end{minipage}
	}
	\subfigure[M07-058]{
		\begin{minipage}[t]{0.144\linewidth}
			\centering
			\includegraphics[width=1\linewidth]{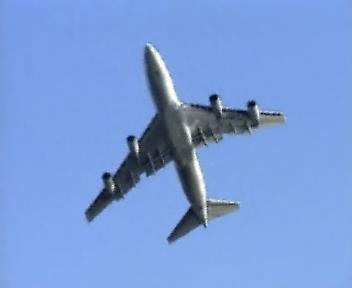}
		\end{minipage}
	}

    \caption{{\small First frame of color video recovery using different algorithms ($\rho=0.3$).}}
    \label{fig:differentvideo}
\end{figure}

\begin{sidewaystable}
\centering
\caption{Quantitave quality indexes and running time (seconds) of different algorithm on the five videos displayed in Figure \ref{fig:differentvideo} ($\rho=0.3$).}
\label{table: differentvideo}

\begin{tabular}{ |c|c|c|c|c|c|c|c|c|c|c| }
    \toprule
	Videos & Indexes & ~~~TCTF~~~ & ~~~~TNN~~~~ & TMac3D-dec & TMac3D-inc & TMac4D-dec & TMac4D-inc & ~~~LRQA-2~~~ & ~~~QRTC~~~ & ~~~MQRTC~~~ \\ \bottomrule
	\hline
	\multirow{5}{*}{AN119T} & RSE & -10.1719 & -11.3749 & -10.5061 & -10.9905 & -10.6873 & -11.1481 & -10.3217 & (-12.5983) & \bf{-12.7885}  \\ \cline{2-11}
	 & PSNR & 26.7931 & 29.1990 & 27.4615 & 28.4303 & 27.8239 & 28.7456 & 27.0927 & (31.6458) & \bf{32.0263}  \\\cline{2-11}
	 & SSIM & 0.8681 & 0.8984 & 0.8739 & 0.8864 & 0.8814 & 0.8922 & 0.8748 & \bf{0.9681} & (0.9498)  \\\cline{2-11}
	 & FSIM & 0.8687 & 0.8978 & 0.7984 & 0.8288 & 0.8170 & 0.8428 & 0.8689 & (0.9175) & \bf{0.9249}  \\\cline{2-11}
	 & time(s) & 241 & 2182 & 49 & 141 & 103 & 185 & 2460 & 173 & 744  \\ \hline
	\multirow{5}{*}{BR128T} & RSE & -8.2424 & -9.7408 & -7.5778 & -8.4865 & -7.7757 & -8.6600 & -8.3008 & (-11.0355) & \bf{-11.2164}  \\\cline{2-11}
	 & PSNR & 22.3657 & 25.3624 & 21.0364 & 22.8538 & 21.4322 & 23.2008 & 22.4825 & (27.9519) & \bf{28.3137}  \\\cline{2-11}
	 & SSIM & 0.7691 & 0.8684 & 0.7612 & 0.8168 & 0.7836 & 0.8311 & 0.7840 & \bf{0.9376} & (0.9299)  \\\cline{2-11}
	 & FSIM & 0.8444 & 0.8917 & 0.8063 & 0.8438 & 0.8193 & 0.8519 & 0.8445 & \bf{0.9443} & (0.9436)  \\\cline{2-11}
	 & time(s) & 402 & 1813 & 60 & 180 & 174 & 256 & 3908 & 221 & 923  \\ \hline
	\multirow{5}{*}{DO01-013} & RSE & -7.6358 & -13.0349 & -11.5504 & -12.1381 & -11.7081 & -12.2760 & -11.5340 & (-13.8539) & \bf{-14.4249}  \\\cline{2-11}
	 & PSNR & 22.1400 & 32.9382 & 29.9691 & 31.1446 & 30.2846 & 31.4203 & 29.9363 & (34.5761) & \bf{35.7182}  \\\cline{2-11}
	 & SSIM & 0.8758 & 0.9752 & 0.9600 & 0.9667 & 0.9638 & 0.9696 & 0.9502 & (0.9863) & \bf{0.9883}  \\\cline{2-11}
	 & FSIM & 0.8575 & 0.9405 & 0.8553 & 0.8804 & 0.8679 & 0.8888 & 0.8983 & (0.9462) & \bf{0.9582}  \\\cline{2-11}
	 & time(s) & 310 & 3241 & 59 & 140 & 122 & 186 & 2262 & 169 & 700  \\ \hline
	\multirow{5}{*}{DO01-030} & RSE & -6.9388 & -12.0369 & -10.6581 & -11.3983 & -10.6586 & -11.3957 & -10.1583 & (-12.7410) & \bf{-13.2912}  \\\cline{2-11}
	 & PSNR & 19.4925 & 29.6887 & 26.9312 & 28.4115 & 26.9322 & 28.4062 & 25.9315 & (31.0969) & \bf{32.1972}  \\\cline{2-11}
	 & SSIM & 0.8509 & 0.9763 & 0.9535 & 0.9646 & 0.9535 & 0.9642 & 0.9431 & (0.9837) & \bf{0.9862}  \\\cline{2-11}
	 & FSIM & 0.8436 & 0.9175 & 0.8551 & 0.8786 & 0.8568 & 0.8798 & 0.9527 & (0.9370) & \bf{0.9481}  \\\cline{2-11}
	 & time(s) & 364 & 5770 & 59 & 161 & 152 & 231 & 3116 & 196 & 793  \\ \hline
	\multirow{5}{*}{M07-058} & RSE & -11.8070 & -13.9020 & -13.8192 & -14.5612 & -13.9278 & -14.6486 & -12.7418 & (-15.4564) & \bf{-15.9781}  \\\cline{2-11}
	 & PSNR & 27.1223 & 31.3122 & 31.1466 & 32.6305 & 31.3638 & 32.8055 & 28.9919 & (34.4211) & \bf{35.4644}  \\\cline{2-11}
	 & SSIM & 0.9572 & 0.9805 & 0.9787 & 0.9839 & 0.9806 & 0.9849 & 0.9691 & (0.9910) & \bf{0.9925}  \\\cline{2-11}
	 & FSIM & 0.7969 & 0.8766 & 0.9155 & 0.9255 & 0.9155 & 0.9278 & 0.9033 & \bf{0.9754} & (0.9584)  \\\cline{2-11}
	 & time(s) & 203 & 4206 & 47 & 131 & 80 & 170 & 1854 & 135 & 542  \\
	\hline
\end{tabular}

\end{sidewaystable}

To verify the robustness of our methods to the sampling ratio $\rho$, we test the  video ``Stefan" which is of YUV Video Sequences. The frame size of the video is 288 $\times$ 352. We set the sampling ratio $\rho$ ranging from $0.1$ to $0.5$ and uniformly sample the pixels to construct $\Omega$. All the other  parameters are set as mentioned. The first frame of the selected video with different sampling ratios are shown in Figure \ref{fig:differentrho}. Figure \ref{fig:differentrho} indicates that the recovered videos of our methods are the clearest under all sampling ratios. Table \ref{table: differentrho} summaries the RSE, PSNR, SSIM, FSIM values and the running time of all the algorithms on the selected video with all sampling ratios which are displayed in Figure \ref{fig:differentrho}. In Table \ref{table: differentrho}, the bold values and the values in brackets are the best and the second best values of RSE, PSNR, SSIM and FSIM, respectively. From the results, the best and the second best of PSNR, RSE or SSIM are of either QRTC or MQRTC. For FSIM value, MQRTC and QRTC performs better than others except the situation when  $\rho=0.1$. The running time of QRTC is longer than several methods of TMac, but not longer than an order of magnitude. The running time of MQRTC is approximately fourfold as long as that of QRTC, but not exceeds that of TNN or LRQA-2.  Thus, we can conclude that our methods are rather robust to the sampling ratio and have considerably better performance than others.

\begin{figure}[htbp]
	\centering
	
	\subfigure{
		
        \rotatebox{90}{\bf{\scriptsize{Original frame}}}
		\begin{minipage}[t]{0.144\linewidth}
			\centering
			\includegraphics[width=1\linewidth]{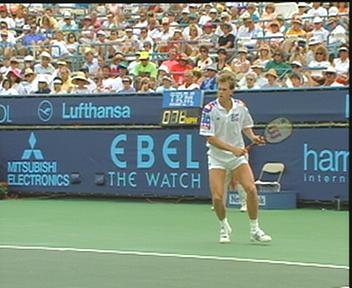}
		\end{minipage}
	}
	\subfigure{
		\begin{minipage}[t]{0.144\linewidth}
			\centering
			\includegraphics[width=1\linewidth]{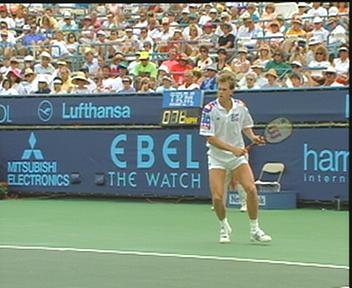}
		\end{minipage}
	}
	\subfigure{
		\begin{minipage}[t]{0.144\linewidth}
			\centering
			\includegraphics[width=1\linewidth]{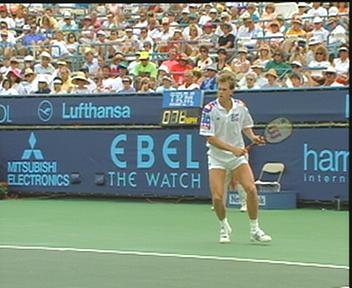}
		\end{minipage}
	}
	\subfigure{
		\begin{minipage}[t]{0.144\linewidth}
			\centering
			\includegraphics[width=1\linewidth]{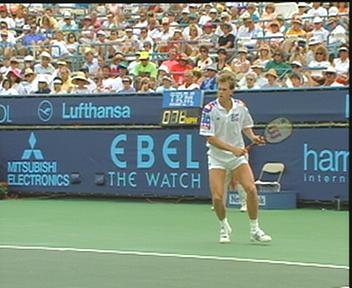}
		\end{minipage}
	}
	\subfigure{
		\begin{minipage}[t]{0.144\linewidth}
			\centering
			\includegraphics[width=1\linewidth]{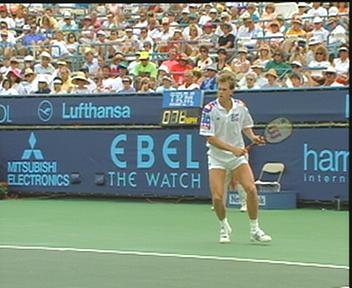}
		\end{minipage}
	}

	\vspace{-3mm}
	\setcounter{subfigure}{0}
	\subfigure{
		
        \rotatebox{90}{\bf{\scriptsize{~Observation}}}
		\begin{minipage}[t]{0.144\linewidth}
			\centering
			\includegraphics[width=1\linewidth]{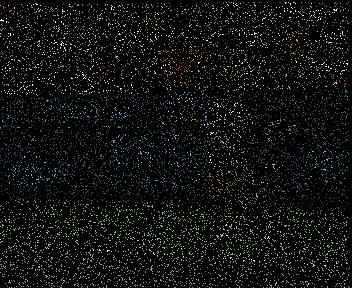}
		\end{minipage}
	}
	\subfigure{
		\begin{minipage}[t]{0.144\linewidth}
			\centering
			\includegraphics[width=1\linewidth]{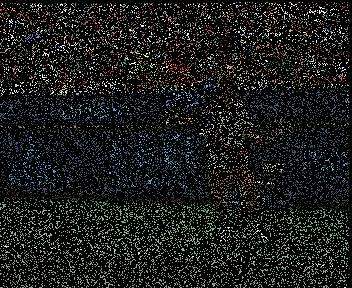}
		\end{minipage}
	}
	\subfigure{
		\begin{minipage}[t]{0.144\linewidth}
			\centering
			\includegraphics[width=1\linewidth]{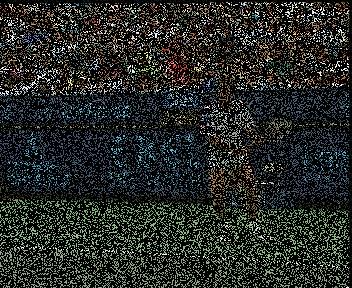}
		\end{minipage}
	}
	\subfigure{
		\begin{minipage}[t]{0.144\linewidth}
			\centering
			\includegraphics[width=1\linewidth]{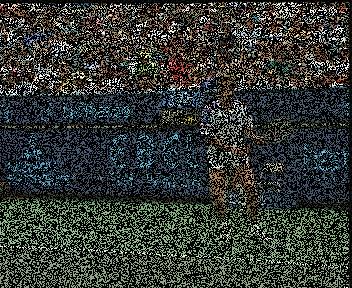}
		\end{minipage}
	}
	\subfigure{
		\begin{minipage}[t]{0.144\linewidth}
			\centering
			\includegraphics[width=1\linewidth]{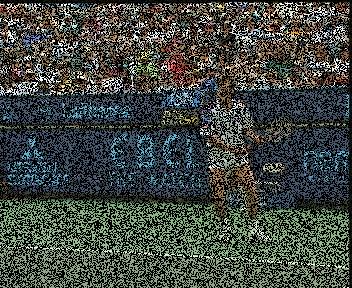}
		\end{minipage}
	}

	\vspace{-3mm}
	\setcounter{subfigure}{0}
	\subfigure{
		
        \rotatebox{90}{\bf{\scriptsize{~~~~TCTF}}}
		\begin{minipage}[t]{0.144\linewidth}
			\centering
			\includegraphics[width=1\linewidth]{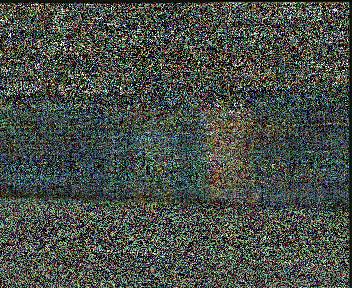}
		\end{minipage}
	}
	\subfigure{
		\begin{minipage}[t]{0.144\linewidth}
			\centering
			\includegraphics[width=1\linewidth]{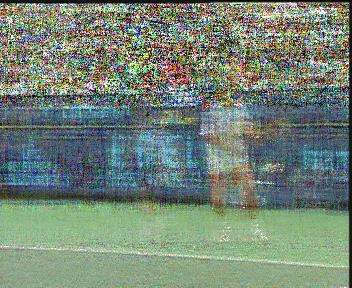}
		\end{minipage}
	}
	\subfigure{
		\begin{minipage}[t]{0.144\linewidth}
			\centering
			\includegraphics[width=1\linewidth]{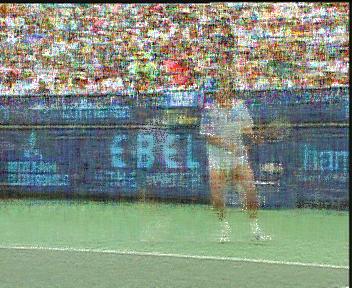}
		\end{minipage}
	}
	\subfigure{
		\begin{minipage}[t]{0.144\linewidth}
			\centering
			\includegraphics[width=1\linewidth]{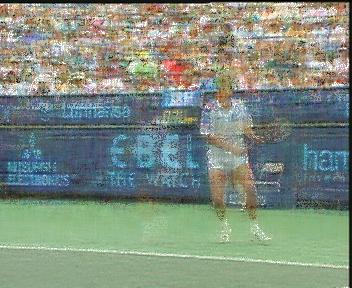}
		\end{minipage}
	}
	\subfigure{
		\begin{minipage}[t]{0.144\linewidth}
			\centering
			\includegraphics[width=1\linewidth]{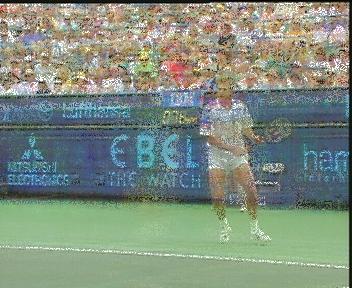}
		\end{minipage}
	}
	
	\vspace{-3mm}
	\setcounter{subfigure}{0}
	\subfigure{
		
        \rotatebox{90}{\bf{\scriptsize{~~~~~TNN}}}
		\begin{minipage}[t]{0.144\linewidth}
			\centering
			\includegraphics[width=1\linewidth]{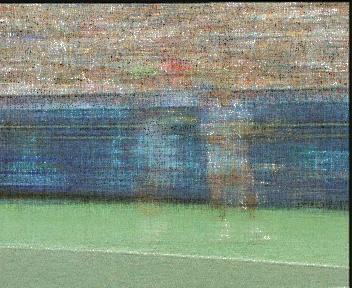}
		\end{minipage}
	}
	\subfigure{
		\begin{minipage}[t]{0.144\linewidth}
			\centering
			\includegraphics[width=1\linewidth]{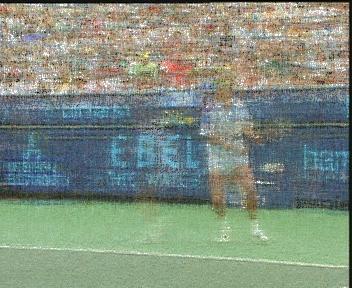}
		\end{minipage}
	}
	\subfigure{
		\begin{minipage}[t]{0.144\linewidth}
			\centering
			\includegraphics[width=1\linewidth]{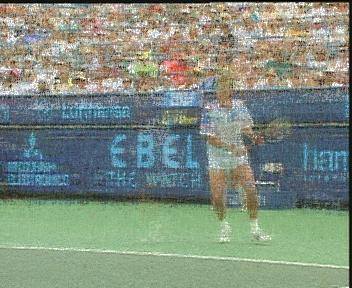}
		\end{minipage}
	}
	\subfigure{
		\begin{minipage}[t]{0.144\linewidth}
			\centering
			\includegraphics[width=1\linewidth]{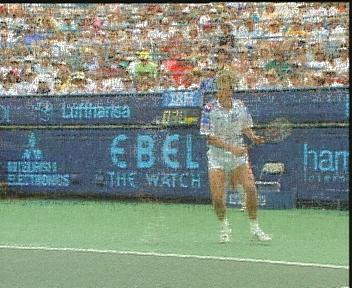}
		\end{minipage}
	}
	\subfigure{
		\begin{minipage}[t]{0.144\linewidth}
			\centering
			\includegraphics[width=1\linewidth]{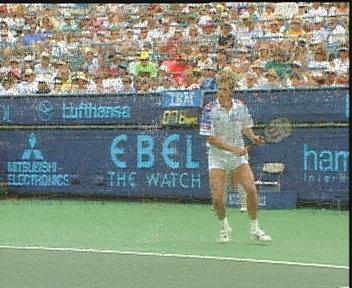}
		\end{minipage}
	}
	
	\vspace{-3mm}
	\setcounter{subfigure}{0}
	\subfigure{
		
        \rotatebox{90}{\bf{\scriptsize{~TMac3D-dec}}}
		\begin{minipage}[t]{0.144\linewidth}
			\centering
			\includegraphics[width=1\linewidth]{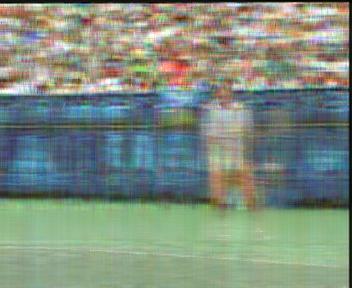}
		\end{minipage}
	}
	\subfigure{
		\begin{minipage}[t]{0.144\linewidth}
			\centering
			\includegraphics[width=1\linewidth]{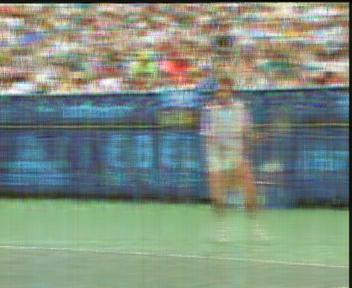}
		\end{minipage}
	}
	\subfigure{
		\begin{minipage}[t]{0.144\linewidth}
			\centering
			\includegraphics[width=1\linewidth]{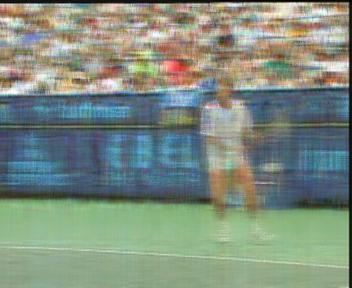}
		\end{minipage}
	}
	\subfigure{
		\begin{minipage}[t]{0.144\linewidth}
			\centering
			\includegraphics[width=1\linewidth]{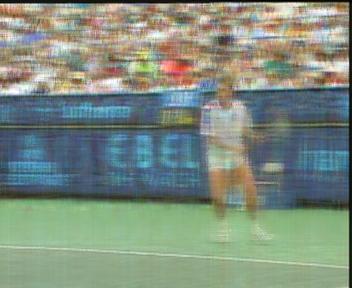}
		\end{minipage}
	}
	\subfigure{
		\begin{minipage}[t]{0.144\linewidth}
			\centering
			\includegraphics[width=1\linewidth]{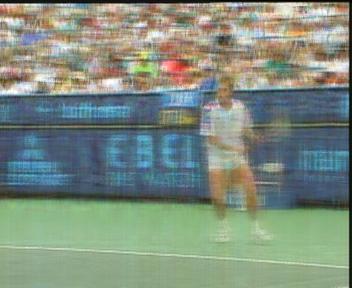}
		\end{minipage}
	}
	
	\vspace{-3mm}
	\setcounter{subfigure}{0}
	\subfigure{
		
        \rotatebox{90}{\bf{\scriptsize{~TMac3D-inc}}}
		\begin{minipage}[t]{0.144\linewidth}
			\centering
			\includegraphics[width=1\linewidth]{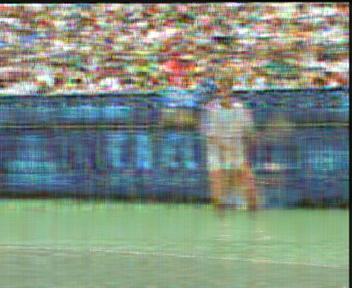}
		\end{minipage}
	}
	\subfigure{
		\begin{minipage}[t]{0.144\linewidth}
			\centering
			\includegraphics[width=1\linewidth]{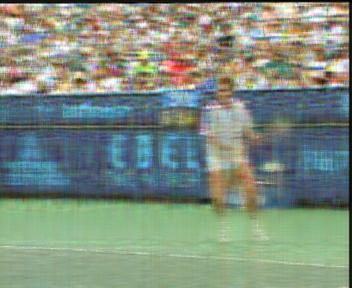}
		\end{minipage}
	}
	\subfigure{
		\begin{minipage}[t]{0.144\linewidth}
			\centering
			\includegraphics[width=1\linewidth]{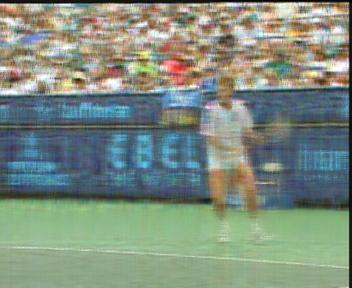}
		\end{minipage}
	}
	\subfigure{
		\begin{minipage}[t]{0.144\linewidth}
			\centering
			\includegraphics[width=1\linewidth]{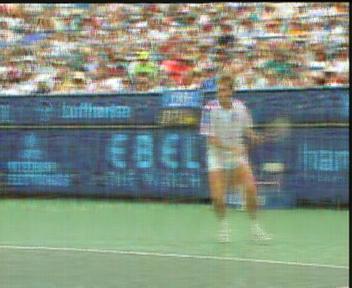}
		\end{minipage}
	}
	\subfigure{
		\begin{minipage}[t]{0.144\linewidth}
			\centering
			\includegraphics[width=1\linewidth]{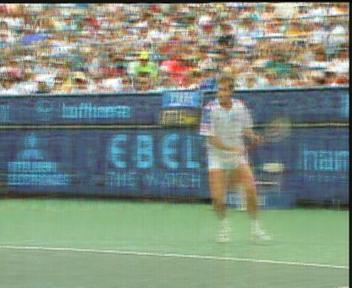}
		\end{minipage}
	}
	
	\vspace{-3mm}
	\setcounter{subfigure}{0}
	\subfigure{
		
        \rotatebox{90}{\bf{\scriptsize{~TMac4D-dec}}}
        \begin{minipage}[t]{0.144\linewidth}
			\centering
			\includegraphics[width=1\linewidth]{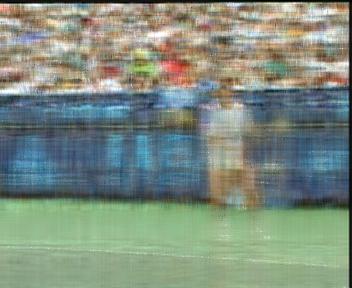}
		\end{minipage}
	}
	\subfigure{
		\begin{minipage}[t]{0.144\linewidth}
			\centering
			\includegraphics[width=1\linewidth]{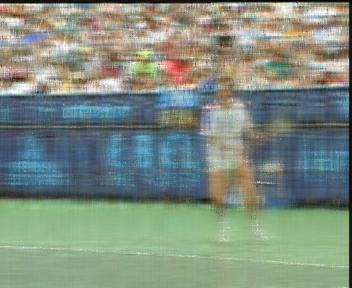}
		\end{minipage}
	}
	\subfigure{
		\begin{minipage}[t]{0.144\linewidth}
			\centering
			\includegraphics[width=1\linewidth]{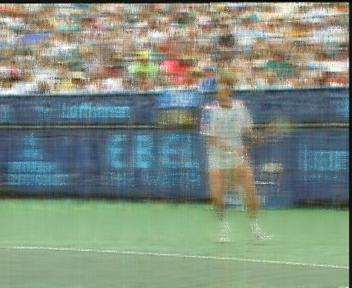}
		\end{minipage}
	}
	\subfigure{
		\begin{minipage}[t]{0.144\linewidth}
			\centering
			\includegraphics[width=1\linewidth]{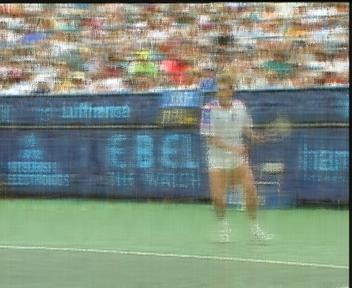}
		\end{minipage}
	}
	\subfigure{
		\begin{minipage}[t]{0.144\linewidth}
			\centering
			\includegraphics[width=1\linewidth]{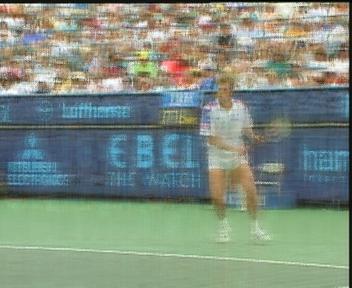}
		\end{minipage}
	}
	
	\vspace{-3mm}
	\setcounter{subfigure}{0}
	\subfigure{
		
        \rotatebox{90}{\bf{\scriptsize{~TMac4D-inc}}}
		\begin{minipage}[t]{0.144\linewidth}
			\centering
			\includegraphics[width=1\linewidth]{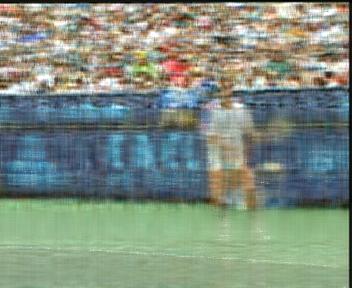}
		\end{minipage}
	}
	\subfigure{
		\begin{minipage}[t]{0.144\linewidth}
			\centering
			\includegraphics[width=1\linewidth]{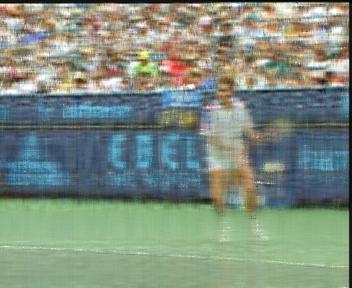}
		\end{minipage}
	}
	\subfigure{
		\begin{minipage}[t]{0.144\linewidth}
			\centering
			\includegraphics[width=1\linewidth]{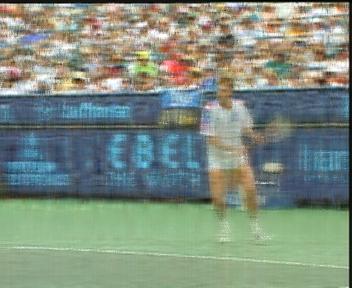}
		\end{minipage}
	}
	\subfigure{
		\begin{minipage}[t]{0.144\linewidth}
			\centering
			\includegraphics[width=1\linewidth]{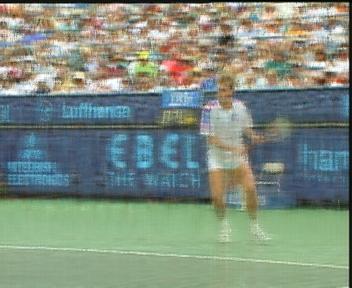}
		\end{minipage}
	}
	\subfigure{
		\begin{minipage}[t]{0.144\linewidth}
			\centering
			\includegraphics[width=1\linewidth]{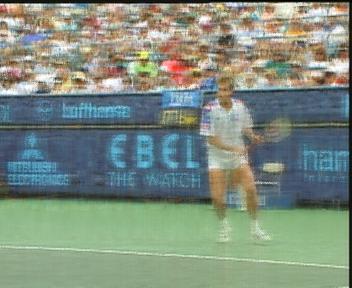}
		\end{minipage}
	}
	
	\vspace{-3mm}
	\setcounter{subfigure}{0}
	\subfigure{
		
        \rotatebox{90}{\bf{\scriptsize{~~~~LRQA-2}}}
		\begin{minipage}[t]{0.144\linewidth}
			\centering
			\includegraphics[width=1\linewidth]{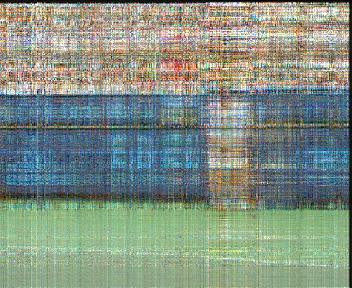}
		\end{minipage}
	}
	\subfigure{
		\begin{minipage}[t]{0.144\linewidth}
			\centering
			\includegraphics[width=1\linewidth]{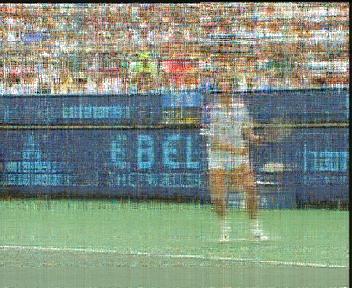}
		\end{minipage}
	}
	\subfigure{
		\begin{minipage}[t]{0.144\linewidth}
			\centering
			\includegraphics[width=1\linewidth]{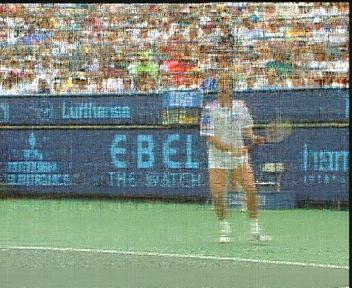}
		\end{minipage}
	}
	\subfigure{
		\begin{minipage}[t]{0.144\linewidth}
			\centering
			\includegraphics[width=1\linewidth]{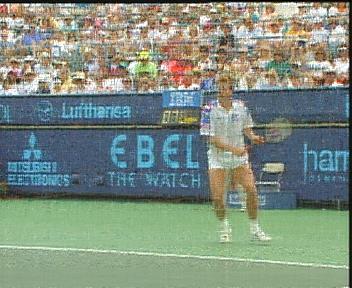}
		\end{minipage}
	}
	\subfigure{
		\begin{minipage}[t]{0.144\linewidth}
			\centering
			\includegraphics[width=1\linewidth]{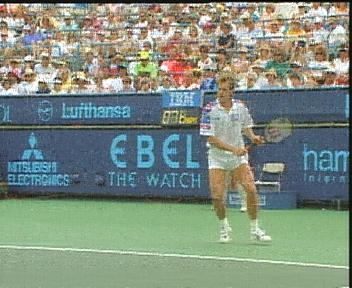}
		\end{minipage}
	}
	
	\vspace{-3mm}
	\setcounter{subfigure}{0}
	\subfigure{
		
        \rotatebox{90}{\bf{\scriptsize{~~~~QRTC}}}
		\begin{minipage}[t]{0.144\linewidth}
			\centering
			\includegraphics[width=1\linewidth]{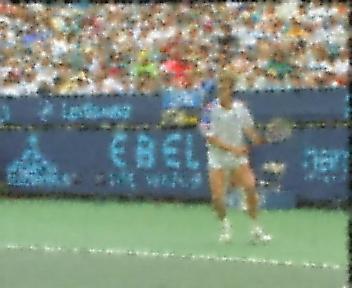}
		\end{minipage}
	}
	\subfigure{
		\begin{minipage}[t]{0.144\linewidth}
			\centering
			\includegraphics[width=1\linewidth]{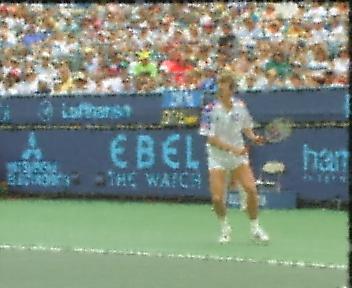}
		\end{minipage}
	}
	\subfigure{
		\begin{minipage}[t]{0.144\linewidth}
			\centering
			\includegraphics[width=1\linewidth]{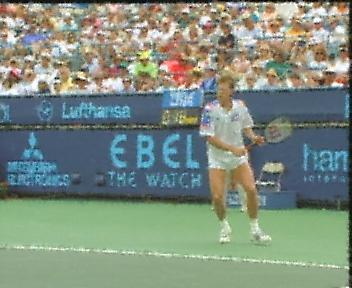}
		\end{minipage}
	}
	\subfigure{
		\begin{minipage}[t]{0.144\linewidth}
			\centering
			\includegraphics[width=1\linewidth]{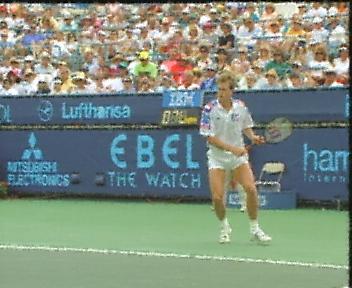}
		\end{minipage}
	}
	\subfigure{
		\begin{minipage}[t]{0.144\linewidth}
			\centering
			\includegraphics[width=1\linewidth]{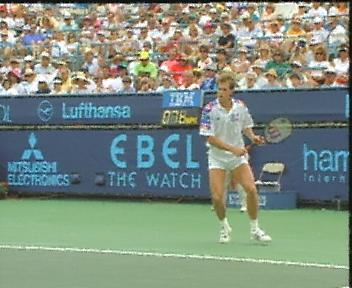}
		\end{minipage}
	}
	
	\vspace{-3mm}
	\setcounter{subfigure}{0}
	\subfigure[$\rho=0.1$]{
		
        \rotatebox{90}{\bf{\scriptsize{~~~MQRTC}}}
		\begin{minipage}[t]{0.144\linewidth}
			\centering
			\includegraphics[width=1\linewidth]{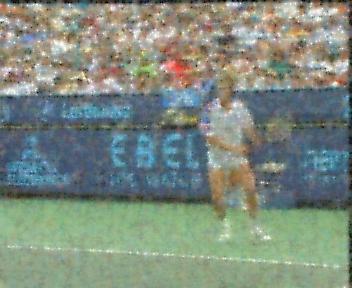}
		\end{minipage}
	}
	\subfigure[$\rho=0.2$]{
		\begin{minipage}[t]{0.144\linewidth}
			\centering
			\includegraphics[width=1\linewidth]{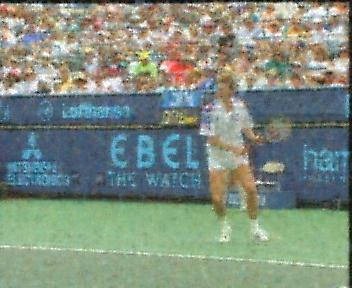}
		\end{minipage}
	}
	\subfigure[$\rho=0.3$]{
		\begin{minipage}[t]{0.144\linewidth}
			\centering
			\includegraphics[width=1\linewidth]{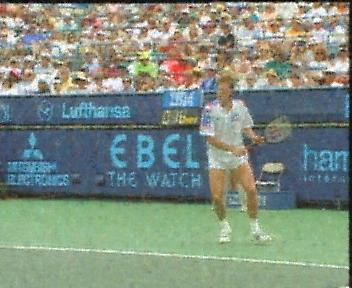}
		\end{minipage}
	}
	\subfigure[$\rho=0.4$]{
		\begin{minipage}[t]{0.144\linewidth}
			\centering
			\includegraphics[width=1\linewidth]{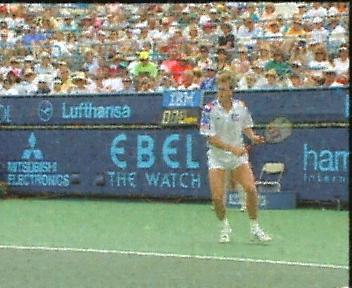}
		\end{minipage}
	}
	\subfigure[$\rho=0.5$]{
		\begin{minipage}[t]{0.144\linewidth}
			\centering
			\includegraphics[width=1\linewidth]{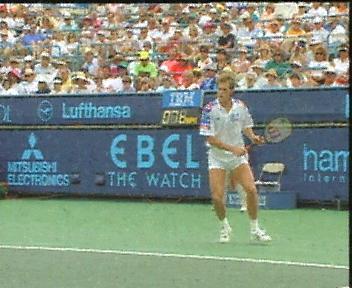}
		\end{minipage}
	}

    \caption{First frame of color video recovery while sample ratio $\rho$ from 0.1 to 0.5 using different methods.}
    \label{fig:differentrho}
\end{figure}

\begin{sidewaystable}[htbp]
\centering
\caption{{\small Quantitave quality indexes and running time (seconds) of different algorithm on the ``Stefan" video from $\rho=0.1$ to 0.5 which are displayed in Figure \ref{fig:differentrho}.}}
\label{table: differentrho}

\begin{tabular}{ |c|c|c|c|c|c|c|c|c|c|c| }
    \toprule
	$\rho$ & Indexes & ~~~TCTF~~~ & ~~~~TNN~~~~ & TMac3D-dec & TMac3D-inc & TMac4D-dec & TMac4D-inc & ~~~LRQA-2~~~ & ~~~QRTC~~~ & ~~~MQRTC~~~ \\ \bottomrule
	\hline
	\multirow{5}{*}{$\rho=0.1$} & RSE & -1.79 & -6.4133 & -6.7203 & -6.8304 & -6.7406 & -6.8705 & -5.0637 & (-6.9780) & \bf{-7.0966}  \\ \cline{2-11}
	 & PSNR & 8.6417 & 17.8723 & 18.4864 & 18.7066 & 18.5270 & 18.7869 & 15.1732 & (19.0018) & \bf{19.2389}  \\\cline{2-11}
	 & SSIM & 0.0818 & 0.6153 & 0.6620 & 0.6846 & 0.6687 & 0.6948 & 0.4232 & \bf{0.7158} & (0.7078)  \\\cline{2-11}
	 & FSIM & 0.6093 & 0.7068 & 0.6791 & (0.7163) & 0.6847 & \bf{0.7190} & 0.6926 & 0.6905 & 0.7046  \\\cline{2-11}
	 & time(s) & 268 & 1201 & 141 & 183 & 328 & 331 & 3330 & 326 & 693  \\ \hline
	\multirow{5}{*}{$\rho=0.2$} & RSE & -4.3148 & -7.2013 & -7.0963 & -7.3819 & -7.1831 & -7.4486 & -6.2521 & (-7.9459) & \bf{-8.2292}  \\\cline{2-11}
	 & PSNR & 13.6755 & 19.4483 & 19.2385 & 19.8096 & 19.4120 & 19.9429 & 17.5500 & (20.9377) & \bf{21.5042}  \\\cline{2-11}
	 & SSIM & 0.4807 & 0.7218 & 0.7091 & 0.7432 & 0.7240 & 0.7549 & 0.6079 & (0.8156) & \bf{0.8256}  \\\cline{2-11}
	 & FSIM & 0.7386 & 0.7787 & 0.6968 & 0.7374 & 0.7102 & 0.7460 & 0.7617 & (0.7875) & \bf{0.8060}  \\\cline{2-11}
	 & time(s) & 273 & 1241 & 70 & 138 & 131 & 185 & 3178 & 177 & 689  \\ \hline
	\multirow{5}{*}{$\rho=0.3$} & RSE & -6.7642 & -7.9653 & -7.3641 & -7.7258 & -7.5270 & -7.8525 & -7.2777 & (-8.7676) & \bf{-9.0605}  \\\cline{2-11}
	 & PSNR & 18.5741 & 20.9765 & 19.7741 & 20.4974 & 20.0998 & 20.7508 & 19.6011 & (22.5809) & \bf{23.1668}  \\\cline{2-11}
	 & SSIM & 0.7206 & 0.7995 & 0.7416 & 0.7785 & 0.7643 & 0.7950 & 0.7327 & (0.8752) & \bf{0.8815}  \\\cline{2-11}
	 & FSIM & 0.8162 & 0.8310 & 0.7166 & 0.7578 & 0.7394 & 0.7736 & 0.8162 & (0.8482) & \bf{0.8633}  \\\cline{2-11}
	 & time(s) & 270 & 1319 & 48 & 123 & 117 & 170 & 2902 & 160 & 562  \\ \hline
	\multirow{5}{*}{$\rho=0.4$} & RSE & -7.9171 & -8.7597 & -7.5969 & -8.0123 & -7.8549 & -8.2227 & -8.2366 & (-9.5559) & \bf{-9.8308}  \\\cline{2-11}
	 & PSNR & 20.8800 & 22.5653 & 20.2396 & 21.0704 & 20.7556 & 21.4911 & 21.5189 & (24.1576) & \bf{24.7074}  \\\cline{2-11}
	 & SSIM & 0.8125 & 0.8586 & 0.7677 & 0.8051 & 0.7977 & 0.8271 & 0.8206 & (0.9149) & \bf{0.9174}  \\\cline{2-11}
	 & FSIM & 0.8388 & 0.8728 & 0.7363 & 0.7775 & 0.7659 & 0.7987 & 0.8628 & (0.8915) & \bf{0.9025}  \\\cline{2-11}
	 & time(s) & 250 & 1448 & 42 & 115 & 104 & 160 & 2939 & 148 & 471  \\ \hline
	\multirow{5}{*}{$\rho=0.5$} & RSE & -7.4885 & -9.6432 & -7.8107 & -8.2800 & -8.1682 & -8.5860 & -9.2639 & (-10.3641) & \bf{-10.6105}  \\\cline{2-11}
	 & PSNR & 20.0227 & 24.3323 & 20.6671 & 21.6058 & 21.3821 & 22.2178 & 23.5737 & (25.7741) & \bf{26.2668}  \\\cline{2-11}
	 & SSIM & 0.7902 & 0.9047 & 0.7891 & 0.8275 & 0.8257 & 0.8541 & 0.8805 & \bf{0.94252} & (0.94251)  \\\cline{2-11}
	 & FSIM & 0.8193 & 0.9076 & 0.7528 & 0.7942 & 0.7898 & 0.8215 & 0.9022 & (0.9237) & \bf{0.9309}  \\\cline{2-11}
	 & time(s) & 244 & 1746 & 39 & 120 & 85 & 146 & 2814 & 143 & 397  \\
	\hline
\end{tabular}

\end{sidewaystable}

\section{Conclusions}\label{sec7}
\noindent
\par
We introduce the concept of gQt-product for third-order quaternion tensor and then define QDFT. Based the newly-defined QDFT, we introduce gQt-SVD of third-order quaternion tensors. We define gQt-rank for third-order quaternion tensor via its gQt-SVD and show the existence of low gQt-rank optimal approximation. We also generalize these results from mode-3 (QRTC) to three modes (MQRTC) of third-order quaternion tensor, and obtain multi-gQt-rank. Numerical experiments indicate that third-order quaternion tensors generated by color videos in real life have an inherent low-rank property.

Therefore, we establish low-rank quaternion tensor completion models based on gQt-rank and multi-gQt-rank to recover color videos with partial data loss. Using TV-regularization to capture the spatial stability feature, we obtain our novel tensor recovery models for color video inpainting. We present  two ALS algorithms (Algorithms \ref{alg:1} and \ref{alg:4}) to solve our models. Their convergence is established (see Subsection 4.3).  Extensive numerical experiments indicate that our approaches QRTC and MQRTC outperform some existing state-of-the-arts methods on various video datasets with different sample ratios, which also demonstrate the robustness of our methods. Especially, MQRTC outperforms QRTC.

\section*{Acknowledgement}
We are very grateful to Dr. Yongyong Chen for sharing us with the code in \cite{MiaoJi2019}. Also many thanks go to the authors for their open-source codes of their studies. The author Liping Zhang  was supported by the National Natural Science Foundation of China (Grant No. 12171271).


\bibliographystyle{unsrt}   
\bibliography{ref}

\clearpage

\end{document}